\numberwithin{equation}{section}
\def\blue{\textcolor{blue}}
\def\red{\textcolor{red}}
\theoremstyle{plain}
\tikzstyle{pathdefault}=[draw, line width=1, solid, color=black]
\tikzstyle{nodedefault}=[circle, inner sep=1.5, fill=black]
\tikzstyle{empty}=[]
\tikzstyle{nodeellipsis}=[circle, inner sep=0.5, fill=black]
\tikzstyle{pathcolor1}=[draw, line width=1.3, densely dashed, color=red]
\tikzstyle{pathcolor2}=[draw, line width=1.6, densely dotted, color=blue]
\tikzstyle{pathcolorlight}=[draw, line width=1, dotted, color=lightgray]
\tikzstyle{arbpathcolor0}=[line width=1, dashdotted, color=black]
\tikzstyle{arbpathcolor1}=[line width=1, densely dashed, color=red]
\tikzstyle{arbpathdefault}=[line width=1, densely dotted, color=blue]
\newcounter{id}
\newcommand{\drawlinedotswithstyle}[4]{
 \def\x{{#3}}
 \def\y{{#4}}
 \tikzstyle{thispathstyle}=[#1]
 \tikzstyle{thisnodestyle}=[#2]
 \setcounter{id}{-1} 
 \foreach \j in {#3}{\stepcounter{id}} 
 \foreach \i in {1,...,\the\value{id}}{  
  \path[thispathstyle] (\x[\i],\y[\i]) --(\x[\i-1],\y[\i-1]); 
 }
 \foreach \i in {1,...,\the\value{id}}{  
  \node[thisnodestyle] at (\x[\i],\y[\i]) {}; 
 }
 \node[thisnodestyle] at (\x[0],\y[0]) {}; 
}
\definecolor{mhcblue}{HTML}{0077CC} 
\definecolor{davidsonred}{HTML}{AC1A2F} 
\definecolor{green}{RGB}{0, 180, 0}
\definecolor{yellow}{RGB}{180, 180, 0}
\tikzset{cross/.style={cross out, draw=black, minimum size=2*(#1-\pgflinewidth), inner sep=0pt, outer sep=0pt},
cross/.default={3pt}}
\newtheorem{theorem}{Theorem}[section]
\newtheorem{lemma}[theorem]{Lemma}
\newtheorem{corollary}[theorem]{Corollary}
\newtheorem{proposition}[theorem]{Proposition}
\theoremstyle{definition}
\newtheorem{example}[theorem]{Example}
\newtheorem{conjecture}[theorem]{Conjecture}
\newtheorem{conj}[theorem]{Conjecture}
\newtheorem{remark}[theorem]{Remark}
\newtheorem{?}[theorem]{Problem}
\theoremstyle{remark}
\newtheorem*{rem}{Remark}
\newcommand{\be}{\begin{equation}}
\newcommand{\ee}{\end{equation}}
\newcommand{\Sym}{{\mathfrak{S}}}
\newcommand{\dbrac}[1]{{\llbracket#1\rrbracket}} 	
\newcommand{\boks}[2]{({#1, #2})}   
\def\Z{\mathbb{Z}}
\def\Sym{\mathfrak{S}}
\def\exc{\operatorname{exc}}
\def\des{\operatorname{des}}
\def\asc{\operatorname{asc}}
\def\dasc{\operatorname{dasc}}
\def\ddes{\operatorname{ddes}}
\def\valley{\mathrm{valley}}
\def\fix{\operatorname{fix}}
\def\fmax{\operatorname{fmax}}
\def\nest{\operatorname{nest}}
\def\dd{\operatorname{dd}}
\def\valley{\operatorname{val}}
\def\nest{\operatorname{nest}}
\def\drop{\operatorname{drop}}
\def\ddscent{\mathrm{ddes}}
\def\cval{\operatorname{cval}}
\def\cpeak{\operatorname{cpeak}}
\def\peak{\operatorname{peak}}
\def\cyc{\operatorname{cyc}}
\def\Orb{\mathrm{Orb}}
\def\fmax{\operatorname{fmax}}
\def\fmin{\operatorname{fmin}}
\def\lrm{\operatorname{lrm}}
\def\Arda{\mathrm{Arda}}
\def\cab{31\text{-}2}
\def\bca{2\text{-}31}
\def\SDE{\mathfrak D}
\def\des{\operatorname{des}}         
\def\asc{\operatorname{asc}}
\def\Des{\operatorname{Des}}         
\def\eareccpeakbis{\operatorname{eareccpeak}}
\def\ear{\operatorname{ear}}
\def\eareccpeak{\operatorname{ear}}
\def\valley{\operatorname{val}}
\def\rec{\operatorname{rec}}
\def\am{0cm} \def\Am{0.5cm}
\def\bm{1cm} \def\Bm{1.5cm}
\def\cm{2cm} \def\Cm{2.5cm}
\def\dm{3cm} 
\def\sizePoint{4pt}
\newcommand{\point}[2]{\fill (canvas cs:x=#1,y=#2) circle (\sizePoint); }
\DeclareMathOperator\Cval{Cval}
\DeclareMathOperator\Cdfall{Cdfall}
\DeclareMathOperator\Cdrise{Cdrise}
\DeclareMathOperator\Cpeak{Cpeak}
\DeclareMathOperator\Fix{Fix}
\DeclareMathOperator\Pex{Pex}
\DeclareMathOperator\Fmax{Fmax}
\DeclareMathOperator\cdfall{cdfall}
\DeclareMathOperator\cdrise{cdrise}
\DeclareMathOperator\Valley{Val}
\DeclareMathOperator\Ddes{Ddes}
\DeclareMathOperator\Dasc{Dasc}
\DeclareMathOperator\Peak{Peak}
\def\pex{\operatorname{pex}}
\def\pdrop{\operatorname{pdrop}}
\def\Pdrop{\operatorname{Pdrop}}
\def\pcyc{\operatorname{pcyc}}
\def\ucross{\operatorname{ucross}}
\def\lnest{\operatorname{lnest}}
\def\stan{\operatorname{stan}}
\def\pval{\mathrm{pval}}
\def\ppk{\mathrm{ppeak}}
\newcommand{\sfa}{{{\sf a}}}
\newcommand{\sfb}{{{\sf b}}}
\newcommand{\sfc}{{{\sf c}}}
\newcommand{\sfd}{{{\sf d}}}
\newcommand{\sfe}{{{\sf e}}}
\newcommand{\bsfa}{{\mbox{\textsf{\textbf{a}}}}}
\newcommand{\bsfb}{{\mbox{\textsf{\textbf{b}}}}}
\newcommand{\bsfc}{{\mbox{\textsf{\textbf{c}}}}}
\newcommand{\bsfd}{{\mbox{\textsf{\textbf{d}}}}}
\newcommand{\bsfe}{{\mbox{\textsf{\textbf{e}}}}}
\def\unest{\mathrm{unest}}
\def\lnest{\mathrm{lnest}}
\def\lev{\mathrm{lev}}
\newcommand{\cross}{\operatorname{cross}}
\newcommand{\icross}{\operatorname{icross}}
\newcommand{\lcross}{\operatorname{lcross}}
\newcommand*{\Scale}[2][4]{\scalebox{#1}{$#2$}}
\newcommand{\eqdef}{\stackrel{\rm def}{=}}
\begin{document}
\title[Equidistributions around  special kinds of descents and excedances]
{Equidistributions around  special kinds of descents and excedances}
\author[B. Han]{Bin Han}
\address[Bin Han]{Department of Mathematics, Bar-Ilan University, Ramat-Gan 52900, Israel.}
\email{han@math.biu.ac.il, han.combin@hotmail.com}

\author[J. Mao]{Jianxi Mao}
\address[Jianxi Mao]{School of mathematical sciences, Dalian University of Technology,  Dalian 116024, P.R China.}
\email{maojianxi@hotmail.com}

\author[J. Zeng]{Jiang Zeng}
\address[Jiang Zeng]{Univ Lyon, Universit\'e Claude Bernard Lyon 1, CNRS UMR 5208, Institut Camille Jordan, 43 blvd. du 11 novembre 1918, F-69622 Villeurbanne cedex, France}
\email{zeng@math.univ-lyon1.fr}

\thanks{BH was supported by the Israel Science Foundation, grant no. 1970/18}

\date{\today}

\begin{abstract}
We consider a sequence of four variable  polynomials by refining  Stieltjes' continued fraction for Eulerian polynomials. 
Using combinatorial theory of  Jacobi-type continued fractions and bijections we derive various combinatorial interpretations in terms of permutation statistics for these polynomials, which include  
special kinds of descents and excedances in a recent paper of
Baril and Kirgizov.
  As a by-product, we derive   several equidistribution results for permutation  statistics, which enables us to  
 confirm and strengthen a recent conjecture of Vajnovszki and also to obtain
 several compagnion permutation statistics for 
 two bistatistics in a conjecture of Baril and Kirgizov.
\end{abstract}

\subjclass[2010]{05A05, 05A15, 05A19}

\keywords{Eulerian polynomials, bijection, permutation statistic, equidistribution, cycle,  continued fraction, descent, excedance, drop,  derangement, gamma-positivity}

\maketitle


\section{Introduction}
It is well-known~\cite{FS70, St97, Pet15}  that
the statistics
"$\des$" and "$\exc$" are equidistributed over
permutations  of $[n]:=\{1, \ldots, n\}$, their common generating function being  the
Eulerian polynomials $A_n(t)$, i.e., 
\begin{align*}
A_n(t)=\sum_{\sigma\in \Sym_n}t^{\des  \sigma}=\sum_{\sigma\in \Sym_n}t^{\exc \sigma},
\end{align*}
which  satisfy  the identity
$$
\frac{A_n(t)}{(1-t)^{n+1}}=\sum_{r=0}^\infty t^r(r+1)^n.
$$
Since MacMahon's pioneering work~\cite{Mac15} various combinatorial  variants and refinements of  Eulerian polynomials have 
appeared, see  \cite{BS20, FHL20, HMZ20, MZ21, SZ20,MMYY21} for some recent papers.  

In a recent paper~\cite{BK21} Baril and Kirgizov
considered  some special  descents, excedances and cycles of permutations, that we recall in the following.
For a permutation
$\sigma:=\sigma(1)\sigma(2)\cdots\sigma(n)$  of $1\ldots n$,
an index  $i\in [1,\, n-1]$ is called a
\begin{itemize}
\item   \textit{descent} (resp. \textit{excedance}) 
 if $\sigma(i)>\sigma(i+1)$ (resp. $\sigma(i)>i$);
 \item  \textit{descent of type 2} 
 if  $i$ is a descent and $\sigma(j)<\sigma(i)$ for $j<i$;
\item  \textit{pure excedance}  if
$i$ is an excedance and $\sigma(j)\notin [i,\sigma(i)]$ for $ j< i$;
\end{itemize}
and an index $i\in [2,\,n]$ is called a 
\begin{itemize}
\item \textit{drop}  if
$i>\sigma(i)$;
\item  \textit{pure drop}  if
$i$ is a drop and $\sigma(j)\notin [\sigma(i), i]$ for $ j> i$.
 \end{itemize}
Let $\des\,\sigma$
(resp.  $\exc\,\sigma$, $\drop\sigma$,  $\des_2\,\sigma$, $\pex\,\sigma$ and  $\pdrop\, \sigma$) denote the number of descents (resp. excedances, drops,  descents of type 2,  pure excedances and 
pure drops)
of $\sigma$.  Identifying $\sigma$ with the bijection $i\mapsto \sigma(i)$ on $[n]$ we can decompose $\sigma$ into disjoint cycles
$(i, \sigma(i), \ldots, \sigma^\ell(i))$ with $\sigma^{\ell+1}(i)=i$ and $i\in [n]$. A cycle with
$\ell=1$ is called a \emph{fixed point} of $\sigma$.
Let $\cyc\,\sigma$ (resp. $\fix\,\sigma$) denote the number of cycles (resp. fixed points) of $\sigma$. The
 number of   non trivial cycles of $\sigma$
 \cite[A136394]{Sloa} is defined by
 \begin{align}\label{eq:defpcyc}
\pcyc\,\sigma= \cyc\,\sigma - \fix\,\sigma. 
\end{align}
For example, if $n=8$ and $\sigma=2\,3\,1\,4\,6\,8\,7\,5$,
the descent indexes of type 2 are $\{2, 6\}$;
the pure excedance indexes are $\{1, 5\}$ and the pure drop indexes are 
$\{3, 8\}$. Thus $\des_2\,\sigma=2$,  $\pex\,\sigma=2$, and $\pdrop\,\sigma=2$.
Factorizing   $\sigma$ as product of  disjoint cycles $(1\,2\,3)(4)(5\,6\,8)(7)$, we derive  $\cyc\,\sigma=4$, $\fix\,\sigma=2$, and $\pcyc\,\sigma=2$.


A \emph{mesh pattern} of length $k$ is a pair $(\tau,R)$, where $\tau$ is a permutation of length $k$ and $R$ is a subset of $\dbrac{0,k} \times \dbrac{0,k}$
with  $\dbrac{0,k}=\{0,1, \ldots, k\}$.
Let $\boks{i}{j}$ denote the box whose corners have coordinates $(i,j), (i,j+1),
(i+1,j+1)$, and $(i+1,j)$.
Note that a
 descent of type $2$ can be viewed as an occurrence of the mesh pattern
$(21,L_1)$ where $L_1=\{1\}\times[0,2]\cup \{(0,2)\}$.
By abuse of notation, 
we use  $\des_2$ to denote the
mesh pattern   corresponding to 
an occurrence of descent of type $2$  in Figure~\ref{fig1}. Similarly, we use
$\pex$ (resp. $\pdrop$) to denote 
 an occurence of pure excedance in Figure \ref{fig1}  although $\pex$ (resp. $\pdrop$)    is not a mesh pattern.
See \cite{BK21, HZ21} for further information about
 mesh patterns.


Recently Baril and Kirgizov~\cite{BK21}
proved   the equidistribution of
the statistics  "$\des_2$", "$\pex$" and "$\pcyc$"  over $\Sym_n$ by
bijections and conclude their paper with  the following two conjectures  on the equidistribution of  two pairs of
bistatistics.

\begin{conjecture}[Baril and Kirgizov]\label{conj:1}
 The two bistatistics  $(\des_2,\cyc)$ and
$(\pex, \cyc)$ are equidistributed on $\Sym_n$.
\label{conj1}
\end{conjecture}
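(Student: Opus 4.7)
The plan is to derive Conjecture~\ref{conj:1} from a common Jacobi-type continued fraction for the two bivariate generating functions
\[
F_n^{(1)}(t,y)=\sum_{\sigma\in\Sym_n}t^{\des_2\sigma}y^{\cyc\sigma},\qquad F_n^{(2)}(t,y)=\sum_{\sigma\in\Sym_n}t^{\pex\sigma}y^{\cyc\sigma},
\]
in the spirit of the paper's overall strategy. Both bistatistics should be readable off weighted Motzkin paths (Laguerre histories) via suitably chosen bijections, and once both readings produce the \emph{same} J-fraction weights $\gamma_h(t,y),\alpha_h(t,y)$, the equality $F_n^{(1)}=F_n^{(2)}$ follows from Flajolet's combinatorial theory of J-fractions.

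First I would treat the $(\pex,\cyc)$ side via a Foata--Zeilberger-type encoding. The statistic $\pex$ has a built-in nestedness flavour, since its definition forbids any earlier $\sigma(j)$ from lying in $[i,\sigma(i)]$, and this is exactly the kind of condition that separates ``new'' up-steps from those falling under existing arcs in a Laguerre history; the cycle statistic $\cyc$ is in turn tracked by the cycle-peak steps. This should yield explicit weights $\gamma_h(t,y),\alpha_h(t,y)$ specialising the four-variable continued fraction developed earlier in the paper. Next, for $(\des_2,\cyc)$ I would set up a separate Fran\c{c}on--Viennot-style scan of the one-line notation, refined to keep track of the left-to-right maxima (since a descent of type $2$ at $i$ forces $\sigma(i)$ to be a left-to-right maximum immediately followed by a fall). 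The goal is to check that exactly the same weights $(\gamma_h,\alpha_h)$ arise.

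The principal obstacle is this final matching. The condition ``$\sigma(j)<\sigma(i)$ for all $j<i$'' defining $\des_2$ is nonlocal in the one-line notation, while $\cyc$ is nonlocal in the cycle notation, and no standard bijection reads both in a single step-by-step pass of one Laguerre history. If a direct reading proves intractable, my backup plan is to construct an explicit bijection $\phi:\Sym_n\to\Sym_n$ with $\des_2\,\sigma=\pex\,\phi(\sigma)$ and $\cyc\,\sigma=\cyc\,\phi(\sigma)$, obtained either by composing the $\des_2\leftrightarrow\pex$ bijection of Baril--Kirgizov with a cycle-preserving correction step, or recursively by a controlled insertion of the letter $n$ into a permutation of $[n-1]$ in such a way that a new descent of type $2$ on one side matches a new pure excedance on the other, without altering the underlying cycle partition. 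Either way, the technical heart is verifying that an operation tailored to the one-line notation nevertheless respects the full cycle structure, not merely the fixed-point count; I expect this to be where most of the work lies.
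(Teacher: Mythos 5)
Your proposal does not actually prove the statement, and it is worth being explicit that the paper does not prove it either: Conjecture~\ref{conj:1} is left \emph{open} in the paper. What the authors establish (Theorem~\ref{mainresult:3}) is a list of bistatistics equidistributed with $(\pex,\cyc)$ --- namely $(\ear,\cyc)$, $(\pcyc,\pex+\fix)$, $(\pcyc,\ear+\fix)$, etc.\ --- obtained by specializing their four-variable continued fraction; the bistatistic $(\des_2,\cyc)$ is not among them. In the concluding remarks they reformulate Conjecture~\ref{conj:1} precisely as the assertion that
$\sum_{n\ge 0}\sum_{\sigma\in\Sym_n} y^{\des_2\sigma}\lambda^{\cyc\sigma}z^n$
has the J-fraction with $\gamma_n=\lambda+2n$ and $\beta_n=(\lambda+n-1)(y+n-1)$, and leave that identity unproved. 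That identity is exactly the ``final matching'' your first strategy requires, so the obstacle you flag at the end of your second paragraph is not a technicality to be cleaned up later --- it is the entire open problem.

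Concretely, the gap is this: neither of your two routes is carried out. For the continued-fraction route, the $(\pex,\cyc)$ side does specialize cleanly out of the master J-fractions (this is in effect what the paper does), but you give no computation of the weights for the $(\des_2,\cyc)$ side; the difficulty you correctly identify --- that $\des_2$ is nonlocal in one-line notation while $\cyc$ is nonlocal in cycle notation, and no single Fran\c{c}on--Viennot or Foata--Zeilberger scan reads both --- is the reason the paper's authors could not close this either. For the bijective route, composing the Baril--Kirgizov $\des_2\leftrightarrow\pex$ bijection with a ``cycle-preserving correction step'' is not a construction but a wish: the paper's own Foata-type bijection $\varphi$ of Lemma~\ref{des2=pcyc} transports $(\pcyc,\exc,\fix,\cyc)\,\sigma$ to $(\des_2,\des,\fmax,\rec)\,\varphi(\sigma)$, i.e.\ it converts $\cyc$ into the \emph{linear} statistic $\rec$, not into $\cyc$ of the image, and no known modification preserves the full cycle partition. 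As written, your submission is a research plan whose two branches each terminate at the unresolved step; it cannot be accepted as a proof of the conjecture.
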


\begin{conjecture}[Vajnovszki]\label{conj:2}
 The two bistatistics  $(\des_2,\des)$ and
$(\pex, \exc)$ are equidistributed on $\Sym_n$.
\label{conj2}
\end{conjecture}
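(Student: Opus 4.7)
My plan is to prove the conjecture by showing that both joint generating functions
\[
P_n(x,y) \;=\; \sum_{\sigma \in \Sym_n} x^{\des_2 \sigma}\, y^{\des \sigma}
\qquad \text{and} \qquad
Q_n(x,y) \;=\; \sum_{\sigma \in \Sym_n} x^{\pex \sigma}\, y^{\exc \sigma}
\]
admit the same Jacobi-type continued fraction expansion. Since the $J$-fraction coefficients uniquely determine the sequence of moments, coincidence of the two $J$-fractions immediately yields $P_n = Q_n$ for all $n$.

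For the excedance side, I would encode each permutation by a Laguerre history (a Motzkin-type path weighted by crossings/nestings), in the spirit of Foata--Zeilberger. Under this bijection, positions $i$ with $i < \sigma(i)$ correspond to up-steps, drops to down-steps, and fixed points to level steps, so $\exc$ is tracked by the up-steps. The pure excedance condition $\sigma(j) \notin [i,\sigma(i)]$ for all $j<i$ translates into the absence of a certain type of lower crossing/nesting at the up-step corresponding to~$i$; this isolates those rises whose label is extremal, giving a clean product weight. Reading off level-by-level produces a $J$-fraction for $Q_n(x,y)$ with explicit weights $b_k(x,y)$ and $\lambda_k(x,y)$ depending only on height.

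For the descent side, I would use a dual encoding of permutations by histories obtained via an insertion process (e.g.\ inserting the letters $1,2,\ldots,n$ into the one-line word from right to left, or the Fran\c{c}on--Viennot bijection applied to descent statistics). Here a position $i$ is a descent of type~$2$ exactly when $\sigma(i)$ is smaller than every earlier letter, which corresponds to an extremal label on a descent-step of the history; this label restriction produces exactly the same type of "outermost" constraint as the one for $\pex$. The task is to choose the encoding so that the step weights are equal on the nose to those obtained in the previous paragraph, which would give the identical $J$-fraction for $P_n(x,y)$.

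The main obstacle will be the alignment of weights: the two statistics are defined in genuinely different languages (prefix minima of values, versus intervals $[i,\sigma(i)]$ avoided by earlier positions), and a direct bijection on $\Sym_n$ carrying $(\des_2,\des)$ to $(\pex,\exc)$ seems elusive. Going through histories converts both problems into a combinatorial question about path weights, but one must be careful to handle fixed points, adjacent descents, and the boundary cases at height~$0$ so that the weight polynomials match termwise. Once the $J$-fractions coincide, I would expect the argument to refine automatically to a stronger equidistribution including $\fix$ (since fixed points are level steps in both encodings), and an entirely analogous argument tracking cycle-type labels on level steps should in parallel settle Conjecture~\ref{conj:1}.
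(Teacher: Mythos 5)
Your strategy---show that $\sum_\sigma x^{\des_2\sigma}y^{\des\sigma}$ and $\sum_\sigma x^{\pex\sigma}y^{\exc\sigma}$ admit the same J-type continued fraction, via a Foata--Zeilberger-type encoding on the excedance side and a Fran\c{c}on--Viennot-type encoding on the descent side---is viable and is close in spirit to what the paper does, but the paper's actual route differs on the descent side. For the excedance side the paper specializes the master J-fractions of Sokal--Zeng, using the translation (Lemma~\ref{lem:pexear}) that $i$ is a pure excedance iff $i$ is a cycle valley with $\ucross(i,\sigma)=0$; note this is an \emph{upper}-crossing condition at a cycle valley, not the lower-crossing condition you guessed, and also that $\exc=\cval+\cdrise$, so excedances are carried by up-steps \emph{and} one family of level steps, not by up-steps alone. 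This yields \eqref{eq:zeng93} with $\gamma_n=w+n(t+1)$ and $\beta_n=nt(\lambda+n-1)$. For the descent side, however, the paper does \emph{not} compute Fran\c{c}on--Viennot weights for $(\des_2,\des,\fmax)$ directly; instead it transports the problem to cycle statistics via a complemented version of Foata's fundamental transformation, proving $(\des_2,\des,\fmax,\rec)\,\varphi(\sigma)=(\pcyc,\exc,\fix,\cyc)\,\sigma$ (Lemma~\ref{des2=pcyc}) and then reusing the J-fraction already obtained for $(\exc,\pcyc,\fix)$. This sidesteps precisely the ``alignment of weights'' you identify as the main obstacle. Separately, the paper realizes your implicit composition $\Phi_{FZ}^{-1}\circ\Phi_{FV}$ as an explicit bijection $\Phi_2=\zeta\circ\Phi_{SZ}$ with $(\des,\des_2,\fmax)\,\sigma=(\exc,\pex,\fix)\,\Phi_2(\sigma)$ (Theorem~\ref{thm1.8}), which confirms your expectation that the equidistribution refines to include fixed points and foremaxima. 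The one substantive caveat about your write-up is that it remains a plan: the entire mathematical content sits in the deferred weight computation, for which you would need the two translations above together with the fact that $i\in\Des_2(\sigma)$ iff $i$ is a peak with $(31-2)(i,\sigma)=0$ (used in Lemma~\ref{lem:exc-drop2}); with those in place both encodings give $\beta_n=nt(\lambda+n-1)$ and matching $\gamma_n$, and your uniqueness argument for J-fractions then closes the proof.
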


%
\begin{figure}[t]
     \begin{center}
             $\des_2=~$\scalebox{0.5}    {\begin{tikzpicture}
                 \draw [thick] (\bm,\am) -- (\bm,\dm);
                 \draw [thick] (\cm,\am) -- (\cm,\dm);
                 \draw [thick] (\am,\bm) -- (\dm,\bm);
                 \draw [thick] (\am,\cm) -- (\dm,\cm);


         \draw [fill=lightgray,pattern=north east
lines](\bm,\am)--(\bm,\dm)--(\cm,\dm)--(\cm,\am);
               \draw [fill=lightgray,pattern=north east
lines](\am,\cm)--(\am,\dm)--(\bm,\dm)--(\bm,\cm);
         \point{\bm}{\cm};\point{\cm}{\bm}
             \end{tikzpicture}}
             $\pex=~$\scalebox{0.5}    {\begin{tikzpicture}
                 \draw [thick] (\Bm,\am) -- (\Bm,\dm);
                 \draw [thick] (\am,\Bm) -- (\dm,\Bm);
                 \draw [thick] (\am,\Cm) -- (\dm,\Cm);
                 \draw [thick] (\am,\am) -- (\dm,\dm);
         \draw [fill=lightgray,pattern=north east
lines](\am,\Bm)--(\am,\Cm)--(\Bm,\Cm)--(\Bm,\Bm);
         \point{\Bm}{\Cm};
             \end{tikzpicture}}
                     $\pdrop=~$\scalebox{0.5}    {\begin{tikzpicture}
                 \draw [thick] (\Bm,\am) -- (\Bm,\dm);
                 \draw [thick] (\am,\Bm) -- (\dm,\Bm);
                 \draw [thick] (\am,\Am) -- (\dm,\Am);
                 \draw [thick] (\am,\am) -- (\dm,\dm);
         \draw [fill=lightgray,pattern=north east
lines](\Bm,\Am)--(\Bm, \Bm)--(\dm, \Bm)--(\dm, \Am);
         \point{\Bm}{\Am};
             \end{tikzpicture}}
            $\eareccpeak=~$\scalebox{0.5}    {\begin{tikzpicture}
                 \draw [thick] (\bm,\am) -- (\bm,\dm);
                 \draw [thick] (\cm,\am) -- (\cm,\dm);
                 \draw [thick] (\am,\bm) -- (\dm,\bm);
                 \draw [thick] (\am,\cm) -- (\cm,\cm);
                 \draw [thick] (\am,\am) -- (\dm,\dm);
                 \draw (\dm,\cm) node[cross] {};
                 \draw (2.2cm,\cm) node[cross] {};
                 \draw (2.4cm,\cm) node[cross] {};
                  \draw (2.6cm,\cm) node[cross] {};
                   \draw (2.8cm,\cm) node[cross] {};
         \draw [fill=lightgray,pattern=north east
lines](\cm,\am)--(\cm,\bm)--(\dm,\bm)--(\dm,\am);
         \point{\cm}{\bm};
             \end{tikzpicture}}
        \end{center}
        \medskip
     \caption{Illustration of the mesh patterns $\des_2$ and
$\pex$ and $\eareccpeak$, where the cross line means
that the value cannot  be in the segment of the horizontal line}
\label{fig1}
\end{figure}
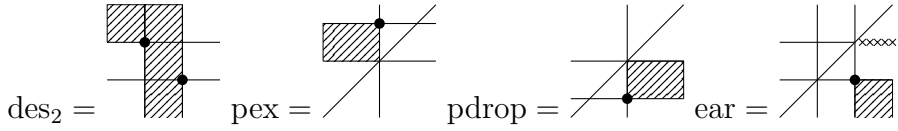

In  this paper we shall  take a different approach to their problems through   the combinatorial theory of J-continued fractions developed  by Flajolet and Viennot in the 1980's~\cite{FV79,Fl80}, see
\cite{BS20, Eli17, HMZ20, SZ20} for  recent developments of this theory.
Recall that a J-type continued fraction  is  a formal power  series defined by
\[
\sum_{n=0}^\infty a_n z^n=\frac{1}{1-\gamma_0 z-\cfrac{\beta_1 z^2}{1-\gamma_1 z-
\cfrac{\beta_2z^2}{\cdots}}},
\]
where  $(\gamma_n)_{n\geq 0}$ and
$(\beta_n)_{n\geq 1}$ are two sequences in some commutative ring.

Define 
the polynomials $A_n(t, \lambda, y, w)$  by the  J-fraction
\begin{align}\label{cf:generalA}
\sum_{n\geq 0}z^n
A_n(t, \lambda, y, w)
 \;=\;
 \cfrac{1}
 {1-wz-\cfrac{t\lambda y\,z^2}
 {1-(w+t+1)z-\cfrac{t(\lambda+1)(y+1)\,z^2}
 {\cdots}}}
 \end{align}
 with $\gamma_n=w+n(t+1)$ and $\beta_n=t(\lambda+n-1)(y+n-1)$.

It is known that $A_n(t, 1,1,1)$ equals 
the Eulerian polynomial $A_n(t)$, see~\cite{HMZ20, SZ20}.
Recently Sokal and the third author~\cite{SZ20} have generalized the
J-fraction for Eulerian polynomials in infinitly many inderterminates, which are also  generailzations of the polynomials $A_n(t, \lambda, y, w)$. The aim of this paper is 
to generalize 
the  results in \cite{BK21} by  exploring the 
combinatorial interpretations of the polynomials
$A_n(t, \lambda, y, w)$ in light of the aformentioned statistics. 
  In particular, we
confirm and strengthen Conjecture~\ref{conj:2} (see Corollary \ref{cor:4pairs}) and obtain  five
equidistributed compagnions  of  the bistatistic $(\pex, \cyc)$ in Conjecture \ref{conj:1} (see Theorem~\ref{mainresult:3}).

\subsection{Main results}

For  $\sigma\in \Sym_n$,
an index $i \in [n]$ is called (see \cite{SZ20}) a
\begin{itemize}
   \item {\em cycle peak}\/ (cpeak) if $\sigma^{-1}(i) < i > \sigma(i)$;
   \item {\em cycle valley}\/ (cval) if $\sigma^{-1}(i) > i < \sigma(i)$;
   \item {\em cycle double rise}\/ (cdrise) if $\sigma^{-1}(i) < i < \sigma(i)$;
   \item {\em cycle double fall}\/ (cdfall) if $\sigma^{-1}(i) > i > \sigma(i)$;
   \item {\em fixed point}\/ (fix) if $\sigma^{-1}(i) = i = \sigma(i)$.
\end{itemize}
Clearly every index $i$ belongs to exactly one of these five types;
we refer to this classification as the {\em cycle classification}\/.
Next,
 an  index $i \in [n]$ (or a value $\sigma(i)$) is called a
\begin{itemize}
\item {\em record}\/ (rec) (or {\em left-to-right maximum}\/)
         if $\sigma(j) < \sigma(i)$ for all $j < i$
      (The index $1$ is  always  a record];
   \item {\em antirecord}\/ (arec) (or {\em right-to-left minimum}\/)
         if $\sigma(j) > \sigma(i)$ for all $j > i$
      (The index $n$  is   always an antirecord);
   \item {\em exclusive record}\/ (erec) if it is a record and not also
         an antirecord;
   \item {\em exclusive antirecord}\/ (earec) if it is an antirecord and not also
     a record.
     \item \textit{exclusive antirecord cycle peak} ($\eareccpeakbis$) if $i$ is an exclusive antirecord and also 
     a cycle peak.
\end{itemize}
 The  statistic  $\eareccpeakbis$ was introduced in \cite{SZ20}, in this paper we adopt the  following concise notation instead
 \begin{align}
 \eareccpeak:=\eareccpeakbis.
\end{align}
An illustration of the pattern  $\ear$ is given  in Figure \ref{fig1}.
Also, we shall denote the set of  indexes of each type by
capitalizing  the first letter of type name. Hence ${\rm Cpeak}\,\sigma$ denotes  the set of
indexes of cycle peaks of $\sigma$.
For example, if  $\sigma=2\,3\,1\,4\,7\,8\,6\,5=(1\,2\,3)(4)(6\,8\, 5\, 7)$,
then
${\rm Earec}\, \sigma=\{3,  8\}$ as $\sigma(3)=1$ and $\sigma(8)=5$ and ${\rm Cpeak}\,\sigma=\{3,7,  8\}$, so
${\rm Ear}\,\sigma =\{3, 8\}$ and  $\eareccpeak\,\sigma=2$.

Our first result provides three interpretations for the polynomials
$A_n(t,\lambda, y, w)$ in \eqref{cf:generalA}.

\begin{theorem}\label{mainresult:1}
We have
\begin{subequations}
\begin{align}
A_n(t, \lambda, y, w)&=
\sum_{\sigma\in\Sym_n}t^{\exc\,\sigma}\lambda^{\pex\,\sigma}y^{\eareccpeak\,\sigma}w^{\fix\,\sigma}\label{Equ1}\\
&=\sum_{\sigma\in\Sym_n}t^{\exc\,\sigma}
\lambda^{\pcyc\,\sigma}y^{\eareccpeak\,\sigma}w^{\fix\,\sigma}\label{Equ2}\\
&=\sum_{\sigma\in\Sym_n}t^{\exc\,\sigma}\lambda^{\pcyc\,\sigma}y^{\pex\,\sigma}
w^{\fix\,\sigma}.\label{Equ3}
\end{align}
\end{subequations}
\end{theorem}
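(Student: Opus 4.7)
The plan is to apply the Flajolet--Viennot theory of J-type continued fractions to rewrite the generating function \eqref{cf:generalA} as a weighted sum over labelled Motzkin paths, and then to build a bijection between $\Sym_n$ and these paths which simultaneously realises the three claimed interpretations.

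By the Flajolet--Viennot master theorem, $A_n(t,\lambda,y,w)$ equals the sum over Motzkin paths of length $n$ of the product of the step weights, where a level step at height $h$ carries weight $\gamma_h=w+h(t+1)$ and a rise from height $h-1$ to $h$ carries weight $\beta_h=t(\lambda+h-1)(y+h-1)$. I will regard each level step as carrying one of $1+2h$ colours (from $\gamma_h=w+ht+h$) and each rise as carrying an ordered pair of labels from two sets of size $h$ with one distinguished element in each, coming from the factorisation $\beta_h=t\cdot(\lambda+1+\cdots+1)(y+1+\cdots+1)$.

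For \eqref{Equ1}, the plan is to use the Foata--Zeilberger bijection in the refined form of Sokal--Zeng~\cite{SZ20}. It maps $\sigma\mapsto P(\sigma)$, a labelled Motzkin path in which position $i$ becomes a rise, fall or level step according to the cycle classification of $i$, and the height at position $i$ counts the number of cycles of $\sigma$ currently ``open'' at $i$. The weight $t$ at every rise and at every cdrise-type level step collects to $t^{\exc\sigma}$; the fixed-point--coloured level steps collect to $w^{\fix\sigma}$. The two distinguished labels at a rise should be shown to correspond exactly to the cycle valley being a pure excedance (label $\lambda$) and to the matched cycle peak being an exclusive antirecord (label $y$), while the $h-1$ other labels in each factor encode respectively the open arcs obstructing the pure-excedance condition $\sigma(j)\notin[i,\sigma(i)]$ and those obstructing the exclusive-antirecord condition at the peak.

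For \eqref{Equ2} and \eqref{Equ3}, the strategy is to keep the same Motzkin skeleton $P(\sigma)$ and redistribute the distinguished labels among the $h$ choices available at each rise of height $h$. The crucial observation is that among these $h$ choices one can single out three natural ``marked'' positions corresponding to the events $\pex$, $\pcyc$, and $\eareccpeak$, so that local label-swapping bijections (permuting which position is marked) transform the statistic $\pex$ into $\pcyc$ (giving \eqref{Equ2}) and subsequently $\eareccpeak$ into $\pex$ (giving \eqref{Equ3}). The main obstacle will be to justify that these three markings are each one of the $h$ valid labels at a rise of height $h$ and that the swapping procedure is globally well-defined at the level of permutations; verifying this compatibility between the continued-fraction bookkeeping and the global combinatorial conditions on $\sigma$ (especially for $\pcyc$, which is cycle-structural rather than positional) should extend the bijective equidistribution of $\pex$, $\pcyc$, and $\des_2$ proved in~\cite{BK21} to the refined four-statistic form required here.
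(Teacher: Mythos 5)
Your treatment of \eqref{Equ1} is essentially the route the paper takes: the paper also obtains it from the first master J-fraction of Sokal--Zeng (Theorem~\ref{thm.permutations.Jtype.final1}), i.e.\ from the Foata--Zeilberger labelling of Motzkin paths, after checking (Lemma~\ref{lem:pexear}) that $i$ is a pure excedance iff $i$ is a cycle valley with $\ucross(i,\sigma)=0$, and an exclusive-antirecord cycle peak iff $i$ is a cycle peak with $\lnest(i,\sigma)=0$; your identification of the two distinguished labels in $\beta_h=t(\lambda+h-1)(y+h-1)$ is exactly this specialization. (Minor point: in that bijection the height at position $i$ counts open excedance arcs, not open cycles.)

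The genuine gap is in \eqref{Equ2} and \eqref{Equ3}. The statistic $\pcyc$ is not realised as ``one of the $h$ labels at a rise'' in the same labelled-path model: in the first master J-fraction the $h$ labels at a cycle valley at height $h$ record the pair $(\ucross(i,\sigma),\unest(i,\sigma))$, and there is no marked label whose count over the path equals $\cyc\sigma-\fix\sigma$. Producing $\lambda^{\cyc\sigma}$ inside a J-fraction with the same $\gamma_n,\beta_n$ is precisely the content of the \emph{second} master J-fraction of \cite{SZ20} (Theorem~\ref{thm.permutations.Jtype.final2} here), whose proof requires a genuinely different and more delicate bijection; it cannot be obtained by a local label swap on the output of the first one, and your own caveat about $\pcyc$ being ``cycle-structural rather than positional'' is exactly where the argument breaks. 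Even granting that theorem, a second obstruction remains for \eqref{Equ3}: in $\widehat{Q}_n$ the weight at a cycle valley depends on $\ucross(i,\sigma)$ and $\unest(i,\sigma)$ only through their sum, so the pure-excedance condition $\ucross(i,\sigma)=0$ cannot be imposed there. The paper resolves this by first deriving a dual form of the second master J-fraction (Proposition~\ref{dualmaster2}) via the rotation $\zeta$ (reversal composed with complementation), which moves the two-index family onto cycle valleys; your proposal does not address this step. To complete your plan you would need either to invoke Theorem~\ref{thm.permutations.Jtype.final2} and its dual, as the paper does, or to construct the second bijection in full --- the label-swapping heuristic alone does not suffice.
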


By \eqref{cf:generalA},
the polynomial  $A_n(t, \lambda, y, w)$ is invariant
under $\lambda\leftrightarrow y$. Hence, the above theorem implies immediately the following result.
\begin{corollary}
The six  bistatistics 
$(\pex, \eareccpeak)$, $(\eareccpeak, \pex)$, 
$(\ear, \pcyc)$, $(\pcyc, \ear)$,  $( \pex, \pcyc)$ and  $(\pcyc, \pex)$
are equidistributed on $\Sym_n$.
\end{corollary}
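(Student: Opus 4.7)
My plan is to derive the corollary as an immediate consequence of Theorem \ref{mainresult:1} combined with the transparent $\lambda\leftrightarrow y$ symmetry of the J-fraction \eqref{cf:generalA}. First I would observe that the coefficients $\gamma_n=w+n(t+1)$ are independent of both $\lambda$ and $y$, while $\beta_n=t(\lambda+n-1)(y+n-1)$ is plainly symmetric in $\lambda$ and $y$; hence, as a polynomial in the four indeterminates,
$$A_n(t,\lambda,y,w) \;=\; A_n(t,y,\lambda,w).$$

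Next I would apply this swap to each of the three combinatorial identities \eqref{Equ1}, \eqref{Equ2}, \eqref{Equ3} of Theorem \ref{mainresult:1}, obtaining three additional identities in which the roles of $\lambda$ and $y$ are exchanged. Taken together, the original three and the swapped three give six different generating-function expressions for one and the same polynomial $A_n(t,\lambda,y,w)$, and the exponent pairs of $(\lambda,y)$ appearing in those six expansions are precisely the six bistatistics
$$(\pex,\eareccpeak),\ (\eareccpeak,\pex),\ (\ear,\pcyc),\ (\pcyc,\ear),\ (\pex,\pcyc),\ (\pcyc,\pex)$$
listed in the statement. Comparing coefficients of $\lambda^i y^j$ (viewing $t$ and $w$ as formal variables, or specialising them to $1$) then immediately yields the claimed equidistribution on $\Sym_n$.

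The hard part is really nothing: the corollary is a one-line consequence of Theorem \ref{mainresult:1} once the symmetry of \eqref{cf:generalA} under $\lambda\leftrightarrow y$ is noted, so all of the genuine combinatorial content sits in Theorem \ref{mainresult:1}. I would also point out that the very same argument in fact delivers the stronger assertion that each of the six bistatistics is jointly equidistributed with the pair $(\exc,\fix)$; retaining $t$ and $w$ rather than setting them to $1$ costs nothing and may prove useful for later refinements in the paper.
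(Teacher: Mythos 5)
Your proposal is correct and is essentially identical to the paper's own argument: the paper likewise observes that $\beta_n=t(\lambda+n-1)(y+n-1)$ is symmetric in $\lambda$ and $y$, so $A_n(t,\lambda,y,w)=A_n(t,y,\lambda,w)$, and then reads off the six bistatistics from the three interpretations of Theorem~\ref{mainresult:1} together with their $\lambda\leftrightarrow y$ swaps. Your added remark that the joint distribution with $(\exc,\fix)$ is preserved is also consistent with what the theorem delivers.
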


Now we consider three specializations of $A_n(t, \lambda, y, w)$.
First let
$
B_n(t,\lambda, w)=A_n(t, \lambda, 1, w)=A_n(t, 1, \lambda, w)$,
namely,
\begin{align}\label{eq:zeng93}
\sum_{n\geq 0}z^n
B_n(t,  w, \lambda)=
 \cfrac{1}
 {1-wz-\cfrac{t\lambda  \,z^2}
 {1-( w+t+1)z-\cfrac{2t(\lambda +1)\,z^2}
 {\cdots}}}
\end{align}
with $\gamma_n=w+n(t+1)$ and $\beta_n=nt(\lambda+n-1)$.

\begin{rem} By \eqref{Equ3} we recover the fix and cycle  $(p,q)$-Eulerian
polynomials~\cite{MMYY21,  Zeng93, KZ02}
\begin{align}
A_n(x, p, 1, pq)=\sum_{\sigma\in \Sym_n}
x^{\exc\, \sigma}  p^{\cyc\, \sigma}q^{\fix\, \sigma}.
\end{align}
\end{rem}

\medskip
To deal with descent statistics, we recall some linear statistics from \cite{HMZ20}.
For  $\sigma=\sigma(1)\sigma(2)\cdots\sigma(n)\in \Sym_{n}$  with convention 0--$\infty$, i.e., $\mathbf{\sigma(0)=0}$ and  $\mathbf{\sigma(n+1)=n+1}$,  a value $\sigma(i)$ ($1\leq i\leq n$) is called a
\def\asc{\textsf{asc}}
\begin{itemize}
\item  \emph{double ascent} (dasc) if $\sigma(i-1)<\sigma(i)$ and $\sigma(i)<\sigma(i+1)$;
\item  \emph{double descent} (ddes) if $\sigma(i-1)>\sigma(i)$ and $\sigma(i)>\sigma(i+1)$;
\item  \emph{peak} (peak) if $\sigma(i-1)<\sigma(i)$ and $\sigma(i)>\sigma(i+1)$;
\item  \emph{valley} (valley) if $\sigma(i-1)>\sigma(i)$ and $\sigma(i)<\sigma(i+1)$.
\end{itemize}
          A double ascent $\sigma(i)$ ($1\leq i\leq n $) is called a \emph{foremaximum} of $\sigma$ if it is at the same time a record.
        Denote the number of foremaxima of $\sigma$ by $\fmax\:\sigma$.
For example, if $\sigma=3\,4\,2\,1\,5\,8\,7\,6$, 
then $\dasc \sigma=\ddes \sigma=\peak\sigma=\valley\sigma=2$ and 
 $\fmax\,\sigma=2$ as the foremaxima of $\sigma$
are $3, 5$. 

\begin{theorem}\label{mainresult:2}
We have
\begin{subequations}
\begin{align}
B_n(t,\lambda, w)
&=\sum_{\sigma\in\Sym_n}t^{\exc\,\sigma}\lambda^{\pcyc\,\sigma}w^{\fix\,\sigma}\label{form1}\\
&=\sum_{\sigma\in\Sym_n}t^{\exc\,\sigma}\lambda^{\eareccpeak\,\sigma}w^{\fix\,\sigma}\label{form2}\\
&=\sum_{\sigma\in\Sym_n}t^{\exc\,\sigma}\lambda^{\pex\,\sigma}w^{\fix\,\sigma}\label{form3}\\
&=\sum_{\sigma\in\Sym_n}t^{\des\,\sigma}\lambda^{\des_2\,\sigma}w^{\fmax\,\sigma}\label{form4}
\end{align}
{\rm and}
\begin{equation}\label{eq:gfB}
\sum_{n\geq0}B_n(t, w, \lambda)\frac{z^n}{n!}=e^{w z}\left(\frac{1-t}{e^{tz}-te^z}\right)^{\lambda }.
\end{equation}
\end{subequations}
\end{theorem}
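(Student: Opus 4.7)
The plan is to establish the four combinatorial identities first and then derive the generating function~\eqref{eq:gfB} by analytic means. Identities~\eqref{form1},~\eqref{form2} and~\eqref{form3} are immediate from Theorem~\ref{mainresult:1} by specialization: since $B_n(t,\lambda,w)=A_n(t,\lambda,1,w)=A_n(t,1,\lambda,w)$, setting $y=1$ in~\eqref{Equ3} gives~\eqref{form1}, setting $y=1$ in~\eqref{Equ1} gives~\eqref{form3}, and setting $\lambda=1$ in~\eqref{Equ2} (then using the symmetry $\lambda\leftrightarrow y$ of~\eqref{cf:generalA} to rename $y$ back to $\lambda$) gives~\eqref{form2}. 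No further work is needed for these three identities.

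For the descent-statistic identity~\eqref{form4} a genuinely new argument is required, since Theorem~\ref{mainresult:1} only covers excedance-type statistics. My plan is to show that the polynomial $P_n(t,\lambda,w):=\sum_{\sigma\in\Sym_n}t^{\des\sigma}\lambda^{\des_2\sigma}w^{\fmax\sigma}$ admits the same J-fraction~\eqref{eq:zeng93} as $B_n$. Following the Flajolet--Viennot paradigm I would construct a bijection between $\Sym_n$ and labeled Motzkin paths of length $n$ that inserts the values $1,2,\ldots,n$ one at a time and tracks, at the $k$-th insertion into a partial permutation with $j$ active insertion slots, whether the inserted value creates a foremaximum (contributing $w$ to $\gamma_j$), a non-foremaximum double ascent or double descent (contributing the $j(t+1)$ part of $\gamma_j$), a new peak (up step), or a new valley (down step of weight $jt(\lambda+j-1)$, where the factor $\lambda+j-1$ separates the slots that create a new descent of type~$2$ from those absorbed by an existing such descent). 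Verifying that every local move reproduces the prescribed $\gamma_n=w+n(t+1)$ and $\beta_n=nt(\lambda+n-1)$ is the technical core; alternatively, one may invoke the descent-type J-fraction already established in~\cite{HMZ20} and identify the resulting weights directly.

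The exponential generating function~\eqref{eq:gfB} will be extracted from~\eqref{form1} by the exponential formula. Starting from the classical derangement identity $\sum_{n\ge 0}d_n(t)\,z^n/n!=(1-t)/(e^{tz}-te^z)$, where $d_n(t)=\sum_{\sigma}t^{\exc\sigma}$ ranges over fixed-point-free permutations of $[n]$, I would take logarithms to recognise that $\log\bigl((1-t)/(e^{tz}-te^z)\bigr)$ is the EGF of non-trivial cycles weighted by $t^{\exc}$. Raising to the $\lambda$-th power then yields, via the exponential formula, the EGF of fixed-point-free permutations weighted by $\lambda^{\pcyc}t^{\exc}$, and multiplication by $e^{wz}$ (the EGF of independent fixed-point sets of weight~$w$) produces $\sum_n B_n(t,\lambda,w)z^n/n!$ in view of~\eqref{form1}. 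The main obstacle throughout is~\eqref{form4}: crossing from the excedance side to the descent side demands either a careful bijection or an independent continued-fraction computation, whereas all remaining parts are formal consequences of Theorem~\ref{mainresult:1} or of standard exponential-formula manipulations.
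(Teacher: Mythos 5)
Your handling of \eqref{form1}--\eqref{form3} coincides with the paper's (all three are immediate specializations of Theorem~\ref{mainresult:1}), but for \eqref{form4} and \eqref{eq:gfB} you take genuinely different routes. For \eqref{form4} the paper does not return to continued fractions at all: Lemma~\ref{des2=pcyc} constructs a bijection $\varphi(\sigma)=(\phi(\sigma))^{c}$, with $\phi$ a variant of Foata's fundamental transformation, satisfying $(\des_2,\des,\fmax,\rec)\,\varphi(\sigma)=(\pcyc,\exc,\fix,\cyc)\,\sigma$, which transports \eqref{form1} to \eqref{form4} in a few lines. Your Fran\c{c}on--Viennot-type Motzkin-path argument is workable --- descents of type $2$ are exactly the peaks $k$ with $\cab(k,\sigma)=0$ and foremaxima the double ascents with $\cab(k,\sigma)=0$, which is the labelling scheme the paper itself exploits in Section~3 via $\Phi_1$ and $\Phi_{SZ}$ --- but you explicitly defer its technical core (checking that the local weights reproduce $\gamma_n=w+n(t+1)$ and $\beta_n=nt(\lambda+n-1)$), and there is a bookkeeping slip in the sketch: the factor $\lambda+j-1$ belongs to the peak steps (the descent tops of type $2$), not to the valley steps; since $\beta_j$ is the product of the paired up- and down-step weights this does not change the continued fraction, but it would matter if you carried the construction out. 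So the one substantive step you leave open is precisely the one the paper's bijective lemma renders unnecessary. For \eqref{eq:gfB} the paper simply cites \cite[Theorems~1 and~3]{Zeng93}; your exponential-formula derivation from the derangement identity $\sum_n d_n(t)z^n/n!=(1-t)/(e^{tz}-te^{z})$ is correct (excedances are additive over the cycle decomposition and depend only on the relative order of the entries within each cycle, so $\lambda^{\pcyc}$ and $w^{\fix}$ factor as you claim) and amounts to a self-contained proof of the cited result, which is a modest gain in readability over the paper's citation.
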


The following corollary of Theorem~\ref{mainresult:2} confirms and generalizes  Conjecture~\ref{conj:2}. 
\begin{corollary}\label{cor:4pairs} The  four  bistatistics
$(\exc,\,\pcyc), \, (\exc,\, \ear),\, (\des,\,\des_2)$ and  $(\exc,\,\pex)$ are equidistributed over $\Sym_n$.
\end{corollary}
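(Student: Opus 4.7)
The plan is very direct: Corollary~\ref{cor:4pairs} is an immediate consequence of Theorem~\ref{mainresult:2} by specializing the auxiliary variable $w$. Once Theorem~\ref{mainresult:2} is established, there is essentially nothing more to prove.

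More precisely, I would argue as follows. The identities \eqref{form1}, \eqref{form2}, \eqref{form3}, \eqref{form4} all express one and the same polynomial $B_n(t,\lambda,w)$ as a generating polynomial over $\Sym_n$, with respect to four different triples of statistics, all of which share the marginal statistic $w^{\fix\,\sigma}$ (respectively $w^{\fmax\,\sigma}$ in the last case). Setting $w=1$ eliminates that marginal, and the four right-hand sides collapse to
\begin{equation*}
B_n(t,\lambda,1)
=\sum_{\sigma\in\Sym_n}t^{\exc\,\sigma}\lambda^{\pcyc\,\sigma}
=\sum_{\sigma\in\Sym_n}t^{\exc\,\sigma}\lambda^{\eareccpeak\,\sigma}
=\sum_{\sigma\in\Sym_n}t^{\exc\,\sigma}\lambda^{\pex\,\sigma}
=\sum_{\sigma\in\Sym_n}t^{\des\,\sigma}\lambda^{\des_2\,\sigma}.
\end{equation*}
Since two statistics (or bistatistics) on a finite set are equidistributed if and only if their generating polynomials coincide, this chain of equalities is exactly the statement that the four bistatistics $(\exc,\pcyc)$, $(\exc,\eareccpeak)$, $(\exc,\pex)$ and $(\des,\des_2)$ are equidistributed over $\Sym_n$.

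The ``hard part'' is therefore not in the corollary itself but entirely contained in Theorem~\ref{mainresult:2}, and in particular in the fourth interpretation \eqref{form4}, which converts the cycle-type statistics $(\exc,\pcyc,\fix)$ (or $(\exc,\pex,\fix)$ etc.) into the linear descent-type statistics $(\des,\des_2,\fmax)$; this is the step where Conjecture~\ref{conj:2} of Vajnovszki gets resolved. The equivalences between \eqref{form1}, \eqref{form2}, \eqref{form3} follow either from the $\lambda\leftrightarrow y$ symmetry already observed after Theorem~\ref{mainresult:1} (specialized at $y=1$) or directly from \eqref{Equ1}--\eqref{Equ3}. Given Theorem~\ref{mainresult:2}, the proof of Corollary~\ref{cor:4pairs} is a one-line specialization $w\mapsto 1$, and no additional bijective or analytic work is required.
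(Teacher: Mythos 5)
Your proposal is correct and matches the paper's own treatment: the corollary is stated as an immediate consequence of Theorem~\ref{mainresult:2}, obtained by comparing the four expressions \eqref{form1}--\eqref{form4} for $B_n(t,\lambda,w)$ and specializing $w=1$, with all the substantive work residing in the theorem itself (the paper additionally notes that bijective proofs are supplied later via Lemma~\ref{des2=pcyc} and Theorem~\ref{thm1.8}, but these are not part of the corollary's derivation).
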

\begin{rem}
We will provide 
bijective proofs of   
Corollary~\ref{cor:4pairs} in Lemma \ref{des2=pcyc} and 
Theorem \ref{thm1.8}.
\end{rem}

Next let
$
C_n(y, \lambda)=A_n(1, \lambda, y, \lambda)=A_n(1,  y, \lambda,\lambda)$. We obtain the following result directly from Theorem~\ref{mainresult:2}. 
\begin{theorem}\label{mainresult:3}
We have
\begin{subequations}
\begin{align}
C_n(y, \lambda)&=\sum_{\sigma\in\Sym_n}
y^{\pex\,\sigma}\lambda^{\eareccpeak\,\sigma+\fix\,\sigma}
=\sum_{\sigma\in\Sym_n}
y^{\eareccpeak\,\sigma}\lambda^{\pex\,\sigma+\fix\,\sigma}\label{eq:c2}\\
&=\sum_{\sigma\in\Sym_n}
y^{\pcyc\,\sigma}\lambda^{\eareccpeak\,\sigma+\fix\,\sigma}
=\sum_{\sigma\in\Sym_n}
y^{\eareccpeak\,\sigma}\lambda^{\cyc\,\sigma}\label{eq:c4}\\
&=\sum_{\sigma\in\Sym_n}
y^{\pcyc\,\sigma}\lambda^{\pex\,\sigma+\fix\,\sigma}
=\sum_{\sigma\in\Sym_n} y^{\pex\,\sigma}\lambda^{\cyc\,\sigma}.\label{eq:c6}
\end{align}
\end{subequations}
\end{theorem}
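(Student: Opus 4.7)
The plan is to obtain Theorem~\ref{mainresult:3} as a direct specialization of Theorem~\ref{mainresult:1}, combined with the manifest $\lambda\leftrightarrow y$ symmetry of the defining continued fraction \eqref{cf:generalA}.

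First I would note that in \eqref{cf:generalA} the coefficients $\gamma_n=w+n(t+1)$ are independent of $\lambda$ and $y$, while $\beta_n=t(\lambda+n-1)(y+n-1)$ is a symmetric polynomial in $\lambda$ and $y$. Consequently $A_n(t,\lambda,y,w)=A_n(t,y,\lambda,w)$ as polynomials in four indeterminates; in particular $A_n(1,\lambda,y,\lambda)=A_n(1,y,\lambda,\lambda)$, which justifies the double expression defining $C_n(y,\lambda)$.

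Next I would substitute $t=1$ and $w=\lambda$ into each of the three permutation interpretations \eqref{Equ1}, \eqref{Equ2}, \eqref{Equ3}. Under this substitution $t^{\exc\,\sigma}\equiv 1$ and $w^{\fix\,\sigma}=\lambda^{\fix\,\sigma}$. Applied to \eqref{Equ1}, this immediately yields the right-hand side of \eqref{eq:c2}; applied to \eqref{Equ2} it yields $\sum_\sigma y^{\eareccpeak\,\sigma}\lambda^{\pcyc\,\sigma+\fix\,\sigma}$, which by the identity $\cyc=\pcyc+\fix$ from \eqref{eq:defpcyc} coincides with the right-hand side of \eqref{eq:c4}; and applied to \eqref{Equ3} it produces the right-hand side of \eqref{eq:c6}.

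Finally, exchanging $\lambda$ and $y$ throughout (permitted by the symmetry established in the first step) transforms each of the three identities just obtained into the left-hand side of the corresponding line of the theorem. There is no genuine obstacle: the entire content of Theorem~\ref{mainresult:3} is already encoded in Theorem~\ref{mainresult:1} together with the palpable $\lambda\leftrightarrow y$ symmetry of \eqref{cf:generalA}; the only care needed is bookkeeping, matching each of the six displayed expressions with one of the $3\times 2$ combinations of (interpretation of $A_n$) $\times$ (with or without the swap $\lambda\leftrightarrow y$).
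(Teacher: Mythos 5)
Your proof is correct and follows essentially the same route as the paper, which simply asserts that the result is obtained "directly" by the specialization $t=1$, $w=\lambda$ (the paper cites Theorem~\ref{mainresult:2} here, but as your argument makes clear the actual source is the three interpretations of Theorem~\ref{mainresult:1}, combined with the $\lambda\leftrightarrow y$ symmetry of \eqref{cf:generalA} and the identity $\cyc=\pcyc+\fix$). Your write-up merely supplies the bookkeeping that the paper leaves implicit, and all six matchings check out.
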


Finally let $D_n(t, \lambda, y)=A_n(t, \lambda, y, 0)$. From Theorem~\ref{mainresult:1} we deduce
\begin{subequations}
\begin{align}
D_n(t, \lambda, y)&=
\sum_{\sigma\in\SDE_n}t^{\exc\,\sigma}\lambda^{\pex\,\sigma}y^{\eareccpeak\,\sigma}\label{dEqu1}\\
&=\sum_{\sigma\in\SDE_n}t^{\exc\,\sigma}
\lambda^{\cyc\,\sigma}y^{\eareccpeak\,\sigma}\label{dEqu2}\\
&=\sum_{\sigma\in\SDE_n}t^{\exc\,\sigma}\lambda^{\cyc\,\sigma}y^{\pex\,\sigma},
\label{dEqu3}
\end{align}
\end{subequations}
where $\SDE_n$  is  the set of 
derangements in $\Sym_n$.

Let $\SDE_n^*$ the subset 
of  $\SDE_n$ consisting of derangements without cycle double rise. Furthermore, for $k\in [n]$ define the set
\begin{align}\label{dstar}
\SDE_n^*(k)=&\{\sigma\in \SDE_n\mid \exc(\sigma)=k,\; \cdrise(\sigma)=0\}.
\end{align}
 We show that  the polynomials $D_n(t, \lambda, y)$
 have a nice $\gamma$-positive formula, see \cite{Bra08, HMZ20} for further informations.
\begin{theorem}\label{gamma-thm}
 We have 
\begin{equation}\label{gamma-D}
D_n(t, \lambda, y)=\sum_{k=0}^{\left\lfloor \frac{n}{2}\right\rfloor}\gamma_{n,k}(\lambda, y)
t^k(1+t)^{n-2k},
\end{equation}
where the gamma coefficient $\gamma_{n,k}(\lambda, y)$ has the following interpreattions
\begin{subequations}
\begin{align}
\gamma_{n,k}(\lambda, y)&=\sum_{\sigma\in\SDE_n^*(k)}\lambda^{\pex\,\sigma}y^{\eareccpeak\,\sigma}\label{gamma1}\\
&=\sum_{\sigma\in\SDE_n^*(k)}\lambda^{\cyc\,\sigma}y^{\eareccpeak\,\sigma}\label{gamma2}\\
&=\sum_{\sigma\in\SDE_n^*(k)}\lambda^{\cyc\,\sigma}y^{\pex\,\sigma}.\label{gamma3}
\end{align}
\end{subequations}
\end{theorem}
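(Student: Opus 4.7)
The plan is to read the $\gamma$-positivity directly off the J-fraction \eqref{cf:generalA}. Setting $w=0$ gives coefficients $\gamma_n = n(1+t)$ and $\beta_n = t(\lambda+n-1)(y+n-1)$; the crucial numerical fact is that $\gamma_n$ is divisible by $1+t$ while $\beta_n$ is divisible by $t$, which is precisely the pattern that yields $\gamma$-positivity with expansion point $t^k(1+t)^{n-2k}$.

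First, I would invoke Flajolet's combinatorial theory of continued fractions to write
\begin{equation*}
D_n(t,\lambda,y) \;=\; \sum_{\pi} t^{r(\pi)}\,(1+t)^{h(\pi)}\prod_{\text{hor at level }j}\! j \prod_{\text{fall }j\to j-1}\!(\lambda+j-1)(y+j-1),
\end{equation*}
where the sum is over Motzkin paths $\pi$ of length $n$, $r(\pi)$ is the number of rises and $h(\pi)=n-2r(\pi)$ the number of horizontal steps. Here the weights arise from distributing $\gamma_j = j(1+t)$ on a horizontal step at level $j$, and splitting $\beta_j = t(\lambda+j-1)(y+j-1)$ as a weight $t$ on the rise together with $(\lambda+j-1)(y+j-1)$ on the matching fall. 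Grouping paths by the value $k:=r(\pi)$ immediately gives
\begin{equation*}
D_n(t,\lambda,y) \;=\; \sum_{k=0}^{\lfloor n/2\rfloor} t^{k}\,(1+t)^{n-2k}\,\widetilde\gamma_{n,k}(\lambda,y),
\end{equation*}
where $\widetilde\gamma_{n,k}(\lambda,y)$ is the generating polynomial of length-$n$ Motzkin paths with $k$ rises under horizontal weight $j$ and fall weight $(\lambda+j-1)(y+j-1)$.

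Second, I would identify $\widetilde\gamma_{n,k}(\lambda,y)$ with the three claimed expressions \eqref{gamma1}--\eqref{gamma3}. Under the Foata--Zeilberger-type bijections underlying Theorem~\ref{mainresult:1} specialized to $w=0$ (hence to $\SDE_n$), a horizontal step at level $j$ carries weight $j(1+t)$ precisely because the step encodes a choice between a cycle double fall (contributing the $j$-part) and a cycle double rise (contributing the $jt$-part). Consequently, Motzkin paths using only the $j$-option at every horizontal step are in bijection with derangements satisfying $\cdrise=0$, i.e., with $\SDE_n^*$; the $k$ rises of such a path match the $k$ cycle valleys, so $\exc = \cval = k$, and the bijection restricts to $\SDE_n^*(k)$. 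The three distinct decodings of the fall weight $(\lambda+j-1)(y+j-1)$ that produce the three interpretations \eqref{dEqu1}--\eqref{dEqu3} of $D_n$ therefore translate verbatim into the three interpretations \eqref{gamma1}--\eqref{gamma3} of $\gamma_{n,k}$, invoking $\pcyc\,\sigma=\cyc\,\sigma$ on derangements.

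The main obstacle is the careful bookkeeping at the bijection level: one has to verify that in each of the three bijections used to prove Theorem~\ref{mainresult:1}, the $\lambda$- and $y$-statistics are encoded purely on fall steps (so that suppressing the $t$-option on horizontal steps leaves them untouched), and that the $j$-labelled horizontal step at level $j$ is unambiguously identified with a cycle double fall in all three bijections simultaneously. Granted these compatibilities, the $\gamma$-expansion \eqref{gamma-D} together with the three combinatorial interpretations \eqref{gamma1}--\eqref{gamma3} of $\gamma_{n,k}(\lambda,y)$ drops out directly from the path expansion above.
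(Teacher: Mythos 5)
Your proposal is correct and follows essentially the same route as the paper: read the $\gamma$-expansion off the Motzkin-path/J-fraction decomposition with $\gamma_n=n(1+t)$ and $\beta_n=t(\lambda+n-1)(y+n-1)$, then identify the coefficient of $t^k(1+t)^{n-2k}$ by suppressing the cycle-double-rise option on level steps so that only $\SDE_n^*(k)$ survives. The ``bookkeeping obstacle'' you flag is discharged in the paper simply by substituting $\sfd_{\ell,\ell'}=0$ and $\sfe_\ell=0$ (keeping the same $\sfa,\sfb,\sfc$ as in the proof of Theorem~\ref{mainresult:1}) into the two master J-fractions and the dual form, which automatically restricts the master polynomials to derangements without cycle double rise and produces the J-fraction with $\gamma_n=n$, $\beta_n=t(\lambda+n-1)(y+n-1)$ for each of \eqref{gamma1}--\eqref{gamma3}.
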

\begin{rem} For $\sigma\in \SDE_n^*(k)$, the mapping  $\sigma\mapsto \sigma^{-1}$ is a bijection from $\SDE_n^*(k)$ to
$\SDE_n^{**}(k)$ with 
\begin{align}
\SDE_n^{**}(k)=&\{\sigma\in \SDE_n\mid \drop(\sigma)=k,\; \cdfall(\sigma)=0\}.
\end{align}
Thus, when $y=1$ both \eqref{gamma2} and \eqref{gamma3} reduce to \cite[Theorem 11]{SZ12}.
\end{rem}
The rest of this paper is organized as follows: we first prove 
the main results in Section~2.
In Section~3, 
we will construct two  bijections on $\Sym_n$
 to prove the equality between \eqref{form3} and \eqref{form4}, namely we have the following  result.

\begin{theorem}\label{thm1.8} There are bijections $\Phi_1: \Sym_n\to \Sym_n$ and  $\Phi_2: \Sym_n\to \Sym_n$ such that
\begin{subequations}
\begin{align}
(\des, \des_2)\,\sigma
&=(\exc, \ear)\,\Phi_1 (\sigma);\label{des2-ear}\\
(\des, \des_2, \fmax)\,\sigma
&=(\exc, \pex, \fix)\,\Phi_2 (\sigma).\label{des2-pex}
\end{align}
\end{subequations}
\end{theorem}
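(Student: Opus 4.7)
The plan is to construct $\Phi_1$ and $\Phi_2$ explicitly, starting with $\Phi_2$ and then obtaining $\Phi_1$ by composition with an involution. For $\Phi_2$, given $\sigma\in\Sym_n$, I form an intermediate permutation $\tilde\sigma$ by reading the one-line notation of $\sigma$ as a concatenation of cycles, placing a cycle boundary just before each left-to-right maximum; then set $\Phi_2(\sigma):=\tilde\sigma^{-1}$. The identity $\des\sigma=\exc\Phi_2(\sigma)$ is immediate from the standard Foata correspondence: each descent $\sigma(i)>\sigma(i+1)$ forces $\sigma(i+1)$ to be a non-LRM, so $\sigma(i)$ and $\sigma(i+1)$ lie in the same cycle with $\tilde\sigma(\sigma(i))<\sigma(i)$, giving a drop of $\tilde\sigma$ and hence an excedance of $\tilde\sigma^{-1}$. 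The identity $\fmax\sigma=\fix\Phi_2(\sigma)$ also follows directly: under the convention $\sigma(n+1)=n+1$, a LRM of $\sigma$ is a foremaximum iff the next position is a LRM, which occurs iff the corresponding cycle of $\tilde\sigma$ is a singleton, and fixed points are preserved under inversion.

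The heart of the argument for $\Phi_2$ is the identity $\des_2\sigma=\pex\Phi_2(\sigma)$. Each descent of $\sigma$ at position $i$ corresponds, under the above construction, to an excedance of $\Phi_2(\sigma)$ at position $\sigma(i+1)$ with value $\sigma(i)$; I would show that the number of such excedances which are pure equals the number of descents of type $2$, by translating the arc-covering condition in the definition of $\pex$ into a condition on the one-line notation of $\sigma$ around position $i$. This is a statement about totals rather than individual pairings: for instance, for $\sigma=5\,2\,4\,1\,3$ the descent of type $2$ at position $1$ corresponds to a non-pure excedance (at position $2$ of $\tilde\sigma^{-1}$), while the non-type-$2$ descent at position $3$ corresponds to a pure excedance (at position $1$), so only the totals $\des_2=\pex=1$ coincide. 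The main obstacle is making this arc-covering argument rigorous across all possible cycle shapes of $\tilde\sigma$, ruling out the possibility that the global count changes when cycles have multiple interior local maxima.

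For $\Phi_1$, define $\Phi_1:=\psi\circ\Phi_2$, where $\psi$ is an involution on $\Sym_n$ that preserves $(\exc,\fix)$ while exchanging $\pex\leftrightarrow\ear$. The existence of such a $\psi$ is guaranteed by the $\lambda\leftrightarrow y$ symmetry of the J-fraction in \eqref{cf:generalA}, and an explicit construction can be given via the Flajolet--Viennot bijection between permutations and labelled Motzkin paths by swapping the two types of labels on each step (corresponding to the two indeterminates $\lambda$ and $y$ in the weight $\beta_n=t(\lambda+n-1)(y+n-1)$). With $\psi$ in hand, the identities $\des\sigma=\exc\Phi_1(\sigma)$ and $\des_2\sigma=\ear\Phi_1(\sigma)$ follow from the corresponding identities for $\Phi_2$. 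The subsidiary challenge is to describe $\psi$ combinatorially on permutations themselves, without routing through the Motzkin-path correspondence, so that the entirety of $\Phi_1$ admits a direct permutation-level description.
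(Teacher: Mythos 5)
Your proposed $\Phi_2$ --- Foata's fundamental transformation (cycle boundaries before each left-to-right maximum) followed by inversion --- does not satisfy $\des_2\,\sigma=\pex\,\Phi_2(\sigma)$, so the step you yourself flag as the ``main obstacle'' is not merely unfinished: it fails. Take $\sigma=3\,1\,4\,2$. The left-to-right maxima are $3$ and $4$, so $\tilde\sigma=(3\,1)(4\,2)=3\,4\,1\,2$, an involution, hence $\Phi_2(\sigma)=3\,4\,1\,2$. Both descents of $\sigma$ (at positions $1$ and $3$) are of type $2$, so $\des_2\,\sigma=2$; but in $3\,4\,1\,2$ the excedance at position $2$ has value $4$ and $\sigma(1)=3\in[2,4]$, so it is not pure and $\pex\,\Phi_2(\sigma)=1$. (The triple $(\des,\des_2,\fmax)\,\sigma=(2,2,0)$ would need an image such as $2\,1\,4\,3$.) The failure mode is exactly the one your ``arc-covering'' discussion would have to exclude: when the cycles produced by the Foata map interleave in value, a crossing appears that destroys purity of an excedance even though the corresponding descent top of $\sigma$ is a record. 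So the global counts do change, and the construction must be modified --- which is why the paper does not use the plain Foata map here but instead takes $\Phi_2=\zeta\circ\Phi_{SZ}$, a biword bijection of Clarke--Steingr\'{\i}msson--Zeng type that places each descent top according to the refined pattern count $(31-2)(i,\sigma)$; it is precisely the identity $(31-2)(i,\sigma)=\cross(i,\tau)$ that converts the record condition in $\des_2$ into the vanishing-crossing condition characterizing $\pdrop$ (and then $\pex$ after the rotation $\zeta$).

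The reduction of $\Phi_1$ to $\Phi_2$ is likewise incomplete, and in any case inherits the gap above since your $\Phi_1$ satisfies \eqref{des2-ear} only if your $\Phi_2$ satisfies \eqref{des2-pex}. The $\lambda\leftrightarrow y$ symmetry of \eqref{cf:generalA} guarantees only the equidistribution of $(\exc,\pex,\fix)$ and $(\exc,\ear,\fix)$, hence the abstract existence of some $\psi$; your sketch of realizing $\psi$ by swapping labels in the Flajolet--Viennot correspondence is not carried out, and is delicate because the two factors $(\lambda+n-1)$ and $(y+n-1)$ of $\beta_n$ live on different vertices (a cycle valley and a cycle peak at the same level), so the swap is not a transposition of labels on a single step. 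The paper sidesteps all of this by building $\Phi_1$ directly as a second biword bijection satisfying $\nest(i,\tau)=(31-2)(i,\sigma)$, from which $\ear$ is read off as the number of cycle peaks with $\lnest=0$.
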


	\begin{rem}
	Note that $\Phi_2$ gives a bijective proof of Conjecture~\ref{conj:2}.
	\end{rem}

The proof of Theorem~\ref{mainresult:1} 
relies on  the fact the polynomial  $A_n(t,\lambda, y, w)$ can be obtained by specializing  the two master 
polynomials $Q_n$ and 
$\widehat{Q}_n$ in \cite{SZ20}, see \eqref{def.Qn.firstmaster} and  \eqref{def.Qn.secondmaster}.
Actually, we shall derive  Theorem~\ref{mainresult:1} 
from the \emph{first and second master J-fractions for
permutations}  in \cite[Theorems 2.9 and 2.14]{SZ20},
  and a dual form of the second master J-fraction, see Proposition~\ref{dualmaster2}. For reader's convenience we shall recall the two master J-fractions for permutations in the next section.
  
As  the polynomials $Q_n$ and 
$\widehat{Q}_n$  are originally defined  using cyclic statistics of permutations,   it is then  suggested in \cite{SZ20} to  seek for  interpretations using linear statistics  for these master  polynomials.  
In Section~3,  we shall give 
two such  interpretations  for the  polynomials $Q_n$  and as an application, we  give a group action proof 
for  a gamma-expansion formula Eq.\eqref{gamma1} (see  Theorem~\ref{gamma-thm}). Finally we conclude the paper with some open questions.

 \subsection{Two  master J-fractions for permutations}
 We recall the two master J-fractions for permutations in \cite{SZ20}.
 First we associate to each permutation $\sigma\in \Sym_n$ a pictorial representation (Figure \ref{fig.pictorial}) by placing vertices $1, 2, \ldots, n$ along horizontal axis and then draw an arc from $i$ to $\sigma(i)$ above (resp. below) the horizontal axis in case $\sigma(i)>i$ (resp. $\sigma(i)<i$), if $\sigma(i)=i$ we do not draw any arc. Of course, the arrows on the arc are redundunt, because the arrow on an arc above (resp. below) the axis always point to the right (resp. left).
We then say that a quadrauplet $i<j<k<l$ forms an
\begin{itemize}
\item \emph{upper crossing} ($\ucross$) if $k=\sigma(i)$ and $l=\sigma(j)$;
\item \emph{lower crossing} ($\lcross$) if $i=\sigma(k)$ and $j=\sigma(l)$;
\item \emph{upper nesting} ($\unest$) if $l=\sigma(i)$ and $k=\sigma(j)$;
\item \emph{lower nesting} ($\lnest$) if $i=\sigma(l)$ and $j=\sigma(k)$.
\end{itemize}
See Figure \ref{fig: crossing} and Figure \ref{fig: nesting}.
We also need a refined version of the above statistics.
The basic idea is that,
rather than counting the {\em total}\/ numbers of quadruplets
$i < j < k < l$ that form upper (resp.\ lower) crossings or nestings,
we should instead count the number of upper (resp.\ lower) crossings or nestings
that use a particular vertex $j$ (resp.\ $k$)
in second (resp.\ third) position,
and then attribute weights to the vertex $j$ (resp.\ $k$)
depending on those values.
More precisely, we define
\begin{subeqnarray}
   \ucross(j,\sigma)
   & = &
   \#\{ i<j<k<l \colon\: k = \sigma(i) \hbox{ and } l = \sigma(j) \}
         \\[2mm]
   \unest(j,\sigma)
   & = &
   \#\{ i<j<k<l \colon\: k = \sigma(j) \hbox{ and } l = \sigma(i) \}
      \slabel{def.unestjk}     \\[2mm]
   \lcross(k,\sigma)
   & = &
   \#\{ i<j<k<l \colon\: i = \sigma(k) \hbox{ and } j = \sigma(l) \}
         \\[2mm]
   \lnest(k,\sigma)
   & = &
   \#\{ i<j<k<l \colon\: i = \sigma(l) \hbox{ and } j = \sigma(k) \}
 \label{def.ucrossnestjk}
\end{subeqnarray}
 We also consider the degenerate cases with $j=k$, by saying that a triplet $i<j<l$ forms an 
\begin{itemize}
\item \emph{upper pseudo-nesting} (upsnest) if
$l=\sigma(i)$ and $j=\sigma(j)$;
\item \emph{lower pseudo-nesting} (lpsnest) if
$i=\sigma(l)$ and $j=\sigma(j)$.
\end{itemize}
See Figure \ref{fig: pseudo-nesting}.
Note that ${\rm upsnest}(\sigma)={\rm lpsnest}(\sigma)$ for all $\sigma$ (see \cite{SZ20}). We therefore write these two statistics simply as
$$
\lev(\sigma)={\rm upsnest}(\sigma)={\rm lpsnest}(\sigma).
$$
The refined level of a fixed point $j$ ($\sigma(j)=j$) is defined by
\be
   \lev(j,\sigma)
   \;=\;
   \#\{ i<j<l \colon\: l = \sigma(i) \}
   \;=\;
   \#\{ i<j<l \colon\: i = \sigma(l) \}
   \;.
 \label{def.level.bis}
\ee
And we obviously have
\be
   \ucross(\sigma)
   \;=\;
   \sum\limits_{j \in {\rm cval}}  \ucross(j,\sigma)
 \label{eq.ucrosscval.sum}
\ee
and analogously for the other four statistics 
$\lcross$, $\unest$, $\lnest$ and $\lev$.
 \begin{figure}[t]
 \begin{picture}(120,60)(140, -40)
\setlength{\unitlength}{2mm}
\linethickness{.5mm}
\put(2,0){\line(1,0){28}}
\put(5,0){\circle*{1,3}}\put(5,0){\makebox(0,-6)[c]
{\small i}}
\put(12,0){\circle*{1,3}}\put(12,0){\makebox(0,-6)[c]
{\small j}}
\put(19,0){\circle*{1,3}}\put(19,0){\makebox(0,-6)[c]
{\small $k$}}
\put(26,0){\circle*{1,3}}\put(26,0){\makebox(0,-6)[c]
{\small $l$}}
\red{\qbezier(5,0)(12,10)(19,0)}
\blue{\qbezier(11,0)(18,10)(25,0)}
\put(33,0){\line(1,0){28}}
\put(37,0){\circle*{1,3}}\put(37,0){\makebox(0,-6)[c]{\small $i$}}\put(44,0){\circle*{1,3}}\put(44,0){\makebox(0,-6)[c]{\small $j$}}\put(51,0){\circle*{1,3}}\put(51,0){\makebox(0,-6)[c]{\small $k$}}
\put(58,0){\circle*{1,3}}\put(58,0){\makebox(0,-6)[c]
{\small $l$}}
\red{\qbezier(37,0)(44,-10)(51,0)}
\blue{\qbezier(43,0)(50,-10)(57,0)}
\end{picture}
\caption{\label{fig: crossing} Upper crossing and lower crossing}
\end{figure}
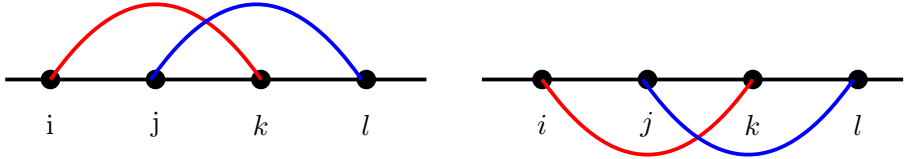

 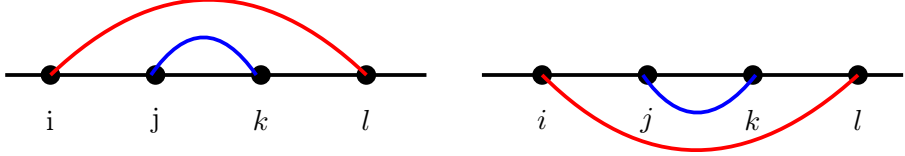
\begin{figure}[t]
 \begin{picture}(120,60)(140, -40)
\setlength{\unitlength}{2mm}
\linethickness{.5mm}
\put(2,0){\line(1,0){28}}
\put(5,0){\circle*{1,3}}\put(5,0){\makebox(0,-6)[c]
{\small i}}
\put(12,0){\circle*{1,3}}\put(12,0){\makebox(0,-6)[c]
{\small j}}
\put(19,0){\circle*{1,3}}\put(19,0){\makebox(0,-6)[c]
{\small $k$}}
\put(26,0){\circle*{1,3}}\put(26,0){\makebox(0,-6)[c]
{\small $l$}}
\red{\qbezier(5,0)(15.5,10)(26,0)}
\blue{\qbezier(11,0)(14.5,5)(18,0)}
\put(33,0){\line(1,0){28}}
\put(37,0){\circle*{1,3}}\put(37,0){\makebox(0,-6)[c]{\small $i$}}\put(44,0){\circle*{1,3}}\put(44,0){\makebox(0,-6)[c]{\small $j$}}\put(51,0){\circle*{1,3}}\put(51,0){\makebox(0,-6)[c]{\small $k$}}
\put(58,0){\circle*{1,3}}\put(58,0){\makebox(0,-6)[c]
{\small $l$}}
\red{\qbezier(37,0)(47,-10)(58,0)}
\blue{\qbezier(43,0)(46.5,-5)(50.5,0)}
\end{picture}
\caption{\label{fig: nesting} Upper nesting and lower nesting}
\end{figure}
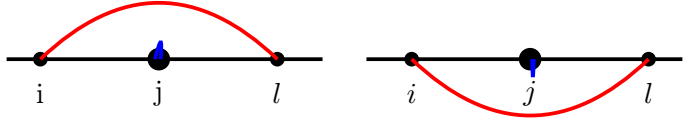
\begin{figure}[t]
 	\centering
 \begin{picture}(120,60)(100, -40)
\setlength{\unitlength}{1.5mm}
\linethickness{.5mm}
\put(2,0){\line(1,0){28}}
\put(5,0){\circle*{1,3}}\put(5,0){\makebox(0,-6)[c]{\small i}}
\put(15.5,0){\circle*{2}}\put(15.5,0){\makebox(0,-6)[c]
{\small j}}
\put(26,0){\circle*{1,3}}\put(26,0){\makebox(0,-6)[c]
{\small $l$}}
\red{\qbezier(5,0)(15.5,10)(26,0)}
\blue{\qbezier(14.25,0)(15,3)(14.75,0)}
\put(33,0){\line(1,0){28}}
\put(37,0){\circle*{1.3}}
\put(37,0){\makebox(0,-6)[c]{\small $i$}}
\put(47.5,0){\circle*{2}}
\put(47.5,0){\makebox(0,-6)[c]{\small $j$}}
\put(58,0){\circle*{1,3}}\put(58,0){\makebox(0,-6)[c]
{\small $l$}}
\red{\qbezier(37,0)(47.5,-10)(58,0)}
\blue{\qbezier(46.75,0)(46.8,-3)(46.9,0)}
\end{picture}
\caption{\label{fig: pseudo-nesting} Upper pseudo-nesting and lower pseudo-nesting of a fixed point}
\end{figure}


We now introduce five infinite families of indeterminates
$\bsfa = (\sfa_{\ell,\ell'})_{\ell,\ell' \ge 0}$,
$\bsfb = (\sfb_{\ell,\ell'})_{\ell,\ell' \ge 0}$,
$\bsfc = (\sfc_{\ell,\ell'})_{\ell,\ell' \ge 0}$,
$\bsfd = (\sfd_{\ell,\ell'})_{\ell,\ell' \ge 0}$,
$\bsfe = (\sfe_\ell)_{\ell \ge 0}$
and define the polynomial $Q_n(\bsfa,\bsfb,\bsfc,\bsfd,\bsfe)$ by
\begin{eqnarray}
   & & \hspace*{-10mm}
   Q_n(\bsfa,\bsfb,\bsfc,\bsfd,\bsfe)
   \;=\;
       \nonumber \\[4mm]
   & &
   \sum_{\sigma \in \Sym_n}
   \;\:
   \prod\limits_{i \in {\rm Cval}}  \! \sfa_{\ucross(i,\sigma),\,\unest(i,\sigma)}
   \prod\limits_{i \in {\rm Cpeak}} \!\!  \sfb_{\lcross(i,\sigma),\,\lnest(i,\sigma)}
       \:\times
       \qquad\qquad
       \nonumber \\[1mm]
   & & \qquad\;
   \prod\limits_{i \in {\rm Cdfall}} \!\!  \sfc_{\lcross(i,\sigma),\,\lnest(i,\sigma)}
   \;
   \prod\limits_{i \in {\rm Cdrise}} \!\!  \sfd_{\ucross(i,\sigma),\,\unest(i,\sigma)}
   \, \prod\limits_{i \in {\rm Fix}} \sfe_{\lev(i,\sigma)}
   \;.
   \quad
 \label{def.Qn.firstmaster}
\end{eqnarray}
The following is the first master J-fraction for permutations in \cite[Theorem~2.9]{SZ20}.

\begin{theorem}\cite{SZ20}
   \label{thm.permutations.Jtype.final1}
The ordinary generating function of the polynomials
$Q_n(\bsfa,\bsfb,\bsfc,\bsfd,\bsfe)$
has the J-type continued fraction
\begin{eqnarray}
   & & \hspace*{-8mm}
   \sum_{n=0}^\infty Q_n(\bsfa,\bsfb,\bsfc,\bsfd,\bsfe) \: z^n
   \;=\;
       \nonumber \\
   & & \hspace*{-4mm}
\Scale[0.8]{
   \cfrac{1}{1 - \sfe_0 z - \cfrac{\sfa_{00} \sfb_{00} z^2}{1 -  (\sfc_{00} + \sfd_{00} + \sfe_1) z - \cfrac{(\sfa_{01} + \sfa_{10})(\sfb_{01} + \sfb_{10}) z^2}{1 - (\sfc_{01} + \sfc_{10} + \sfd_{01} + \sfd_{10} + \sfe_2)z - \cfrac{(\sfa_{02} + \sfa_{11} + \sfa_{20})(\sfb_{02} + \sfb_{11} + \sfb_{20}) z^2}{1 - \cdots}}}}
}
       \nonumber \\[1mm]
   \label{eq.thm.permutations.Jtype.final1}
\end{eqnarray}
with coefficients
\begin{subeqnarray}
   \gamma_n  & = &   \sfc^\star_{n-1} \,+\, \sfd^\star_{n-1} \,+\, \sfe_n
          \\[1mm]
   \beta_n   & = &   \sfa^\star_{n-1} \, \sfb^\star_{n-1}
 \label{def.weights.permutations.Jtype.final1}
\end{subeqnarray}
where
\be
   \sfa^\star_{n-1}  \;\eqdef\;  \sum_{\ell=0}^{n-1} \sfa_{\ell,n-1-\ell}
 \label{def.astar}
\ee
and likewise for $\sfb,\sfc,\sfd$.
\end{theorem}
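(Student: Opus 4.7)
The plan is to apply the Flajolet--Viennot combinatorial theory of Jacobi-type continued fractions. By that theory, the coefficient of $z^n$ on the right-hand side of \eqref{eq.thm.permutations.Jtype.final1} equals the sum over Motzkin paths $\omega$ of length $n$ of a product of step weights: $\gamma_k$ for each level step at height $k$, $\beta_k$ for each down step from height $k$ to $k-1$, and $1$ for each up step. It therefore suffices to construct a weight-preserving bijection $\Psi\colon\Sym_n\to\mathcal{M}_n$ between permutations and labelled Motzkin paths of length $n$ whose combined step weights reproduce the monomial appearing in \eqref{def.Qn.firstmaster} for each permutation.

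First I would define $\Psi$ by reading the vertices $1,2,\ldots,n$ of the permutation in order and emitting a Motzkin step at each vertex $j$ according to the cycle classification: a cycle valley produces an up step, a cycle peak produces a down step, and cycle double rises, cycle double falls, and fixed points each produce a level step. A direct bookkeeping shows that both the number of upper arcs and the number of lower arcs crossing the gap between positions $j-1$ and $j$ equal the height $h_{j-1}$ of the path just before its $j$th step: cycle valleys increase both counts by one, cycle peaks decrease both counts by one, and the remaining three types preserve them; this establishes non-negativity of the height, matching the Motzkin condition.

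The key step is then to equip each Motzkin step with a label that records the refined crossing/nesting information at vertex $j$. For a cycle valley $j$ at height rising from $h-1$ to $h$, the identity
\[
\ucross(j,\sigma)+\unest(j,\sigma)\;=\;\#\{\,i<j\colon \sigma(i)>j\,\}\;=\;h-1
\]
holds, since the $h-1$ upper arcs straddling the gap before $j$ split according to whether $\sigma(i)$ is below or above $\sigma(j)$. A direct construction shows that, given the partial permutation already built on the vertices to the left of $j$, every pair $(p,q)$ of non-negative integers with $p+q=h-1$ is realized by exactly one admissible placement of the upper arc emanating from $j$; this choice is encoded as the label of the up step, and the analogous choice for the lower arc terminating at $j$ (made at the matching future cycle peak) is encoded as the label of the corresponding down step. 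Parallel statements hold at cycle peaks via $\lcross,\lnest$, at cycle double rises via $\ucross,\unest$, at cycle double falls via $\lcross,\lnest$, and at fixed points via $\lev$.

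Summing the monomial in \eqref{def.Qn.firstmaster} over all permutations projecting to a fixed underlying Motzkin path then factorises into local sums: an up step reaching height $h$ contributes $\sum_{p+q=h-1}\sfa_{p,q}=\sfa^\star_{h-1}$, the matching down step from $h$ to $h-1$ contributes $\sfb^\star_{h-1}$, giving $\beta_h=\sfa^\star_{h-1}\sfb^\star_{h-1}$; a level step at height $h$ contributes $\sfc^\star_{h-1}+\sfd^\star_{h-1}+\sfe_h=\gamma_h$ (with the convention $\sfc^\star_{-1}=\sfd^\star_{-1}=0$, so that at height $0$ only fixed points contribute). This exactly matches \eqref{def.weights.permutations.Jtype.final1}. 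The main obstacle will be verifying that $\Psi$ is a genuine bijection rather than merely a weight-matching correspondence in distribution: one must give an explicit inverse that reconstructs $\sigma$ step by step from the sequence of labelled steps, and check that the labels on an up step and its matching down step can be chosen \emph{independently} without interference, so that the local sum over labels truly factors as $\sfa^\star_{h-1}\cdot\sfb^\star_{h-1}$.
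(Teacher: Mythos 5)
The paper does not prove this theorem itself; it is imported verbatim from \cite[Theorem~2.9]{SZ20}, whose proof proceeds exactly as you outline: a Foata--Zeilberger-type bijection onto labelled Motzkin paths followed by Flajolet's master theorem for J-fractions, with the height identity $\ucross(j,\sigma)+\unest(j,\sigma)=h-1$ and the local factorisation of the label sums into $\sfa^\star_{h-1}\sfb^\star_{h-1}$ and $\sfc^\star_{h-1}+\sfd^\star_{h-1}+\sfe_h$ being precisely the two computations you identify. Your sketch is correct in outline and takes essentially the same route as the cited source, and you rightly flag that the remaining substantive work is the explicit step-by-step inverse construction establishing that the up-step and down-step labels are chosen independently.
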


We again introduce five infinite families of indeterminates:
$\bsfa = (\sfa_{\ell})_{\ell \ge 0}$,
$\bsfb = (\sfb_{\ell,\ell'})_{\ell,\ell' \ge 0}$,
$\bsfc = (\sfc_{\ell,\ell'})_{\ell,\ell' \ge 0}$,
$\bsfd = (\sfd_{\ell,\ell'})_{\ell,\ell' \ge 0}$,
$\bsfe = (\sfe_\ell)_{\ell \ge 0}$;
please note that $\bsfa$ now has one index rather than two.
We then define the polynomial
$\widehat{Q}_n(\bsfa,\bsfb,\bsfc,\bsfd,\bsfe,\lambda)$ by
\begin{eqnarray}
   & & \hspace*{-10mm}
   \widehat{Q}_n(\bsfa,\bsfb,\bsfc,\bsfd,\bsfe,\lambda)
   \;=\;
       \nonumber \\[4mm]
   & &
   \sum_{\sigma \in \Sym_n}
   \;\:
   \lambda^{\cyc(\sigma)} \;
   \prod\limits_{i \in {\rm Cval}}  \! \sfa_{\ucross(i,\sigma)+\unest(i,\sigma)}
   \prod\limits_{i \in {\rm Cpeak}} \!\!  \sfb_{\lcross(i,\sigma),\,\lnest(i,\sigma)}
       \:\times
       \qquad\qquad
       \nonumber \\[1mm]
   & & \;
   \prod\limits_{i \in {\rm Cdfall}} \!\!  \sfc_{\lcross(i,\sigma),\,\lnest(i,\sigma)}
   \,
   \prod\limits_{i \in {\rm Cdrise}} \!\!  \sfd_{\ucross(i,\sigma)+\unest(i,\sigma),\,\unest(\sigma^{-1}(i),\sigma)}
   \, \prod\limits_{i \in {\rm Fix}} \sfe_{\lev(i,\sigma)}
   \;.
   \qquad
 \label{def.Qn.secondmaster}
\end{eqnarray}
Note that here, in contrast to Theorem~\ref{thm.permutations.Jtype.final1},
$\widehat{Q}_n$ depends on $\ucross(i,\sigma)$ and $\unest(i,\sigma)$
only via their sum;
and note also the somewhat bizarre appearance of
$\unest(\sigma^{-1}(i),\sigma)$ as the second index on $\sfd$.
The following is the second
 master J-fraction for permutations in \cite[Theorem~2.14]{SZ20}.

\begin{theorem}\cite{SZ20}
   \label{thm.permutations.Jtype.final2} 
The ordinary generating function of the polynomials
$\widehat{Q}_n(\bsfa,\bsfb,\bsfc,\bsfd,\bsfe,\lambda)$
has the J-type continued fraction
\begin{eqnarray}
   & & \hspace*{-8mm}
   \sum_{n=0}^\infty \widehat{Q}_n(\bsfa,\bsfb,\bsfc,\bsfd,\bsfe,\lambda) \: z^n
   \;=\;
       \nonumber \\
   & & \hspace*{-4mm}
\Scale[0.85]{
   \cfrac{1}{1 - \lambda\sfe_0 z - \cfrac{\lambda \sfa_0 \sfb_{00} z^2}{1 -  (\sfc_{00} + \sfd_{00} + \lambda\sfe_1) z - \cfrac{(\lambda+1) \sfa_1 (\sfb_{01} + \sfb_{10}) z^2}{1 - (\sfc_{01} + \sfc_{10} + \sfd_{10} + \sfd_{11} + \lambda\sfe_2)z - \cfrac{(\lambda+2) \sfa_2 (\sfb_{02} + \sfb_{11} + \sfb_{20}) z^2}{1 - \cdots}}}}
}
       \nonumber \\[1mm]
   \label{eq.thm.permutations.Jtype.final2}
\end{eqnarray}
with coefficients
\begin{subeqnarray}
   \gamma_n  & = &    \sum_{\ell=0}^{n-1} \sfc_{\ell,n-1-\ell} \,+\, \sum_{\ell=0}^{n-1} \sfd_{n-1,\ell}
                                      \,+\, \lambda\sfe_n
          \\[1mm]
   \beta_n   & = &   (\lambda+n-1) \, \sfa_{n-1} \, \sum_{\ell=0}^{n-1} \sfb_{\ell,n-1-\ell}
 \label{def.weights.permutations.Jtype.final2}
\end{subeqnarray}
\end{theorem}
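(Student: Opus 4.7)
The plan is to deduce Theorem~\ref{thm.permutations.Jtype.final2} from Flajolet's fundamental interpretation of J-fractions as generating functions of weighted Motzkin paths. Specifically, the right-hand side of \eqref{eq.thm.permutations.Jtype.final2} equals $\sum_{\omega}\prod_{i}\gamma_{h_i}^{[\mathrm{lev}]}\beta_{h_i}^{[\mathrm{fall}]}$, where $\omega$ ranges over Motzkin paths of length $n$ with step heights $h_i$, and the factors $\gamma_h,\beta_h$ are attached to level steps at height $h$ and to NE--SE step pairs reaching height $h$, respectively. It therefore suffices to construct a weight-preserving bijection between $\Sym_n$ and a suitable family of decorated Motzkin paths (``Laguerre histories'') under which the $\sigma$-summand of \eqref{def.Qn.secondmaster} matches the corresponding path weight.

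The bijection I would use is a cycle-tracking variant of the Foata--Zeilberger correspondence. Scanning $i=1,2,\ldots,n$, one associates to each $i$ a step of $\omega$: a NE step if $i\in{\rm Cval}$, an SE step if $i\in{\rm Cpeak}$, and a level step if $i$ is a cycle double rise, a cycle double fall, or a fixed point. The height $h_{i-1}$ just before step $i$ equals the number of arcs ``open at $i$'', i.e.\ pairs $(j,k)$ with $j<i\leq k$ and either $\sigma(j)=k$ or $\sigma(k)=j$. The decoration of step $i$ records the relative position of $i$ among these open arcs: for an NE step one writes a pair $(\ell,\ell')$ with $\ell+\ell'=h_{i-1}$ counting how the new valley arc at $i$ crosses versus nests the existing upper open arcs, producing after summation the factor $\sum_{\ell+\ell'=n-1}\sfb_{\ell,\ell'}$ in $\beta_n$; the other step types are handled analogously, yielding $\sfa_{n-1}$, $\sfc_{\ell,n-1-\ell}$, $\sfd_{n-1,\ell}$, and $\sfe_n$.

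The distinctive feature of the \emph{second} master J-fraction is the weight $\lambda^{\cyc(\sigma)}$ together with the asymmetric specializations $\sfa_{\ucross(i,\sigma)+\unest(i,\sigma)}$ at cycle valleys and $\sfd_{\ucross(i,\sigma)+\unest(i,\sigma),\,\unest(\sigma^{-1}(i),\sigma)}$ at cycle double rises. I would realize the cycle weight by enlarging the decoration at each NE step and each fixed-point level step with an additional ``open a new cycle at $i$'' option: the $h$ ordinary possibilities plus this extra option at height $h$ produce the factor $\lambda+h$, giving the $(\lambda+n-1)$ in $\beta_n$ and the $\lambda\sfe_n$ summand in $\gamma_n$. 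The fact that $\widehat{Q}_n$ depends on $\ucross(i,\sigma)$ and $\unest(i,\sigma)$ only through their sum at cvals and cdrises is precisely what permits the NE/cdrise decoration to be condensed into a single index $\sfa_{n-1}$; and the unusual second index $\unest(\sigma^{-1}(i),\sigma)$ on $\sfd$ reflects that at a cycle double rise $i$ one tracks the nesting data carried by the \emph{incoming} arc into $i$, which was already recorded when the corresponding cycle valley $\sigma^{-1}(i)$ was visited.

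The main obstacle will be the simultaneous bookkeeping of the refined statistics $\ucross(i,\cdot),\unest(i,\cdot),\lcross(i,\cdot),\lnest(i,\cdot),\lev(i,\cdot)$ under the bijection, and in particular the careful verification that the ``new cycle'' option contributes exactly a $\lambda$-factor (and not a $\lambda$-shift on some other label) at precisely the right heights. I would handle this by induction on $i$, checking case by case (cval, cpeak, cdrise, cdfall, fix) that the set of open arcs, their linear order, and their decorations evolve consistently with the announced weights in \eqref{def.weights.permutations.Jtype.final2}.
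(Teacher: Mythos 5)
The paper does not actually prove this theorem: it is quoted verbatim from \cite{SZ20} (Theorem 2.14 there), so there is no internal proof to compare against. Your strategy --- Flajolet's correspondence between J-fractions and weighted Motzkin paths, combined with a Foata--Zeilberger-type bijection whose step types follow the cycle classification and whose labels track the refined crossing/nesting statistics --- is indeed the strategy of the cited source, so at the level of architecture you are on the right road.

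As a proof, however, the sketch has concrete problems. First, the step/weight assignment is backwards: in \eqref{def.Qn.secondmaster} the letter $\sfb_{\lcross(i,\sigma),\lnest(i,\sigma)}$ is attached to cycle \emph{peaks}, which close arcs and hence are SE steps, while $\sfa$ is attached to cycle valleys (NE steps); you attach the two-index label $(\ell,\ell')$ producing $\sum_{\ell+\ell'=n-1}\sfb_{\ell,\ell'}$ to the NE step. The product $\beta_n$ comes out the same, but the combinatorial content is wrong: at a cycle valley $i$ the sum $\ucross(i,\sigma)+\unest(i,\sigma)$ equals the path height and carries no free label (whence the single-index $\sfa_{n-1}$), whereas the genuine binary choice (which open lower arc to close, splitting into $\lcross$ versus $\lnest$) happens at the cycle peak. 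Second, and more seriously, the entire difficulty of the second master J-fraction --- realizing $\lambda^{\cyc(\sigma)}$ by a single extra option worth $\lambda$ at each rise so that exactly $(\lambda+n-1)$ appears, while remaining consistent with the $\sfb_{\lcross,\lnest}$ labels at the matching fall and with the exotic second index $\unest(\sigma^{-1}(i),\sigma)$ on $\sfd$ --- is asserted rather than constructed. This is precisely why \cite{SZ20} needs a separate and substantially more delicate bijection for Theorem~\ref{thm.permutations.Jtype.final2} than for Theorem~\ref{thm.permutations.Jtype.final1}: one must show that, after fixing the path and all other labels, the residual freedom at each rise of height $n-1$ is exactly $n-1$ ``ordinary'' completions plus one ``new cycle'' completion, and that the latter changes $\cyc$ by one without disturbing any of the refined statistics. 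Your closing ``induction on $i$, checking case by case'' names the place where this must be verified but does not supply the invariant that makes the verification go through, so the proposal as written is a plan rather than a proof.
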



%

For $\sigma                                                                                                                                                                                           \in\Sym_{n}$,
we define the  \emph{complementation} $\sigma^c$ and \emph{reversal} $\sigma^r$ of of $\sigma$ by
\begin{align}\label{c-r}
\sigma^c(i)=n+1-\sigma(i)\quad \textrm{and}\quad \sigma^r(i)=\sigma(n+1-i)\quad\textrm{ for}\quad i\in [n].
\end{align}
Let $\zeta: \sigma\mapsto \tau$ be  the transformation by reversal combined with  complementation, i.e.,
\begin{align}\label{zeta}
\zeta(\sigma)=(\sigma^r)^c.
\end{align}
By the pictorial representation of $\sigma$
we can visualize the operation $\zeta$ by
a geometric interpretation, i.e., rotate a graphical representation of a permutation by $180$ degrees,
see Figure \ref{fig.pictorial}.

\begin{figure}[t]
	\centering
	\vspace*{2cm}
\begin{picture}(160,15)(75, -50)
	\setlength{\unitlength}{2mm}
	\linethickness{.5mm}
	\put(-2,0){\line(1,0){34}}
	\put(0,0){\circle*{1,3}}\put(0,0){\makebox(0,-4)[c]{\small 1}}
	\put(4,0){\circle*{1,3}}\put(4,0){\makebox(0,-4)[c]{\small 2}}
	\put(8,0){\circle*{1,3}}\put(8,0){\makebox(0,-4)[c]{\small 3}}
	\put(12,0){\circle*{1,3}}\put(12,0){\makebox(0,-4)[c]{\small 4}}
	\put(16,0){\circle*{1,3}}\put(16,0){\makebox(0,-4)[c]{\small 5}}
	\put(20,0){\circle*{1,3}}\put(20,0){\makebox(0,-4)[c]{\small 6}}
	\put(24,0){\circle*{1,3}}\put(24,0){\makebox(0,-4)[c]{\small 7}}
	\put(28,0){\circle*{1,3}}\put(28,0){\makebox(0,-4)[c]{\small 8}}
	
	\blue{\qbezier(0,0)(8,10)(16,0)
	\qbezier(4,0)(14,10)(24,0)
	\qbezier(16,0)(22,10)(28,0)}
	\red{
     \qbezier(7,0)(4,-6)(-1,0)
	\qbezier(20,0)(12,-10)(4,0)
	\qbezier(23.4,0)(22,-6)(19,0)
	\qbezier(27.4,0)(18,-10)(7,0)}
	
	\end{picture}
	\put(-35,50){\makebox(5,5){$\qquad\xlongrightarrow{\zeta}\qquad$}}
	\begin{picture}(100,15)(160, -50)
	\setlength{\unitlength}{2mm}
	\linethickness{.5mm}
	\put(26,0){\line(1,0){34}}
	\put(30,0){\circle*{1,3}}\put(30,0){\makebox(0,-4)[c]{\small 1}}
	\put(34,0){\circle*{1,3}}\put(34,0){\makebox(0,-4)[c]{\small 2}}
	\put(38,0){\circle*{1,3}}\put(38,0){\makebox(0,-4)[c]{\small 3}}
	\put(42,0){\circle*{1,3}}\put(42,0){\makebox(0,-4)[c]{\small 4}}
	\put(46,0){\circle*{1,3}}\put(46,0){\makebox(0,-4)[c]{\small 5}}
	\put(50,0){\circle*{1,3}}\put(50,0){\makebox(0,-4)[c]{\small 6}}
	\put(54,0){\circle*{1,3}}\put(54,0){\makebox(0,-4)[c]{\small 7}}
	\put(58,0){\circle*{1,3}}\put(58,0){\makebox(0,-4)[c]{\small 8}}
	
	\red{\qbezier(30,0)(40,10)(50,0)
	\qbezier(34,0)(36,6)(38,0)
	\qbezier(38,0)(46,10)(54,0)
	\qbezier(50,0)(54,6)(58,0)}
	\blue{\qbezier(41.4,0)(36,-10)(30,0)
	\qbezier(54,0)(44,-10)(34,0)
	\qbezier(58,0)(50,-10)(41,0)}
	\end{picture}	
	
	\caption{Pictorial representation of $\sigma=5\, 7 \, 1 \, 4 \, 8 \, 2 \, 6 \,3$ (left)
	 and $\zeta(\sigma)=6\,3\,7\,1\,5\,8\,2\,4$.	
		\label{fig.pictorial}
	}
\end{figure}
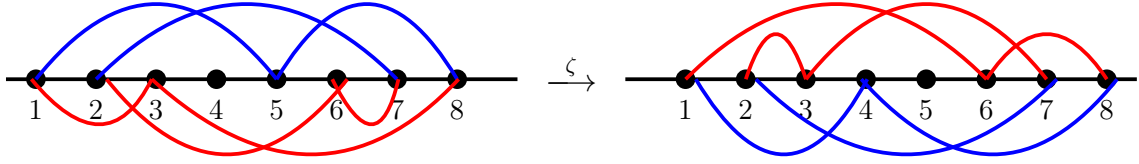




	\begin{lemma}\label{lem:exc-drop1}
	 For $\sigma\in\Sym_n$,
		we have
		\begin{align}\label{mapping: zeta}
	(\drop, \pdrop, \fix)	\sigma=(\exc, \pex, \fix) \zeta(\sigma).
		\end{align}
	\end{lemma}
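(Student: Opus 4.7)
The plan is to unwind the definition of $\zeta$ into an explicit formula, then check each of the three claimed equalities by a direct reindexing. Writing $\tau = \zeta(\sigma) = (\sigma^r)^c$, one sees that
\[
\tau(i) \;=\; n+1 - \sigma(n+1-i) \qquad \text{for all } i \in [n],
\]
so the involution $\iota : i \mapsto n+1-i$ on $[n]$ relates $\sigma$ and $\tau$. This matches the pictorial description of $\zeta$ as a $180^\circ$ rotation: arcs above the axis (excedances) become arcs below (drops), and fixed points stay fixed. So morally the statement just says that pure drops look like pure excedances after this rotation.

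First I would handle the easy statistics. For any index $i \in [n]$, the identity $\tau(n+1-i) = n+1-\sigma(i)$ yields $\tau(n+1-i) > n+1-i \iff \sigma(i) < i$, showing that $\iota$ restricts to a bijection between drops of $\sigma$ (indices in $[2,n]$) and excedances of $\tau$ (indices in $[1,n-1]$). The same identity immediately gives $\tau(n+1-i) = n+1-i \iff \sigma(i)=i$, so fixed points of $\sigma$ biject with fixed points of $\tau$ via $\iota$. This settles two of the three coordinates.

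Next I would translate the ``pure'' condition. Fix a drop $i$ of $\sigma$ and set $k = n+1-i$. The pure excedance condition at $k$ for $\tau$ reads $\tau(m) \notin [k,\tau(k)] = [n+1-i,\,n+1-\sigma(i)]$ for all $m < k$. Writing $m = n+1-j$ so that $m < k \iff j > i$, and substituting $\tau(m) = n+1-\sigma(j)$, the condition becomes $n+1-\sigma(j) \notin [n+1-i,\,n+1-\sigma(i)]$, which is equivalent (by subtracting from $n+1$) to $\sigma(j) \notin [\sigma(i), i]$ for all $j > i$. This is exactly the pure drop condition at $i$ for $\sigma$. Hence $\iota$ also sends pure drops of $\sigma$ bijectively onto pure excedances of $\tau$.

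Combining the three bijections gives the joint identity $(\drop,\pdrop,\fix)\,\sigma = (\exc,\pex,\fix)\,\zeta(\sigma)$, as desired. No step looks like a genuine obstacle; the only mild care is with the ranges of indices ($[2,n]$ versus $[1,n-1]$) and with the order-reversing nature of $\iota$, which flips the closed interval $[\sigma(i),i]$ into $[n+1-i,\,n+1-\sigma(i)]$.
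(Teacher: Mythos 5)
Your proof is correct and takes essentially the same approach as the paper, which simply declares the identity ``obvious by the geometric interpretation of $\zeta$'' (the $180\degree$ rotation); you have just carried out explicitly the index computation $\zeta(\sigma)(i)=n+1-\sigma(n+1-i)$ that the paper leaves implicit. The verification of the drop/excedance, fixed-point, and pure-condition correspondences under $i\mapsto n+1-i$ is accurate.
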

\begin{proof}
This is obvious by the geometric interpretation of $\zeta$.
\end{proof}
	
We derive a dual version of Theorem~\ref{thm.permutations.Jtype.final2} from
\eqref{def.Qn.secondmaster}.
\begin{proposition} [Dual form of Theorem~\ref{thm.permutations.Jtype.final2}]\label{dualmaster2} We have
\begin{eqnarray}
   & & \hspace*{-10mm}
   \widehat{Q}_n(\bsfa,\bsfb,\bsfc,\bsfd,\bsfe,\lambda)
   \;=\;
       \nonumber \\[4mm]
   & &
   \sum_{\sigma \in \Sym_n}
   \;\:
   \lambda^{\cyc(\sigma)} \;
   \prod\limits_{i \in {\rm Cval}}  \! \sfb_{\ucross(i,\sigma),\,{\rm unest}(i,\sigma)}
   \prod\limits_{i \in {\rm Cpeak}} \!\!  \sfa_{\lcross(i,\sigma)+\lnest(i,\sigma)}
       \:\times
       \qquad\qquad
       \nonumber \\[1mm]
   & & \;
   \prod\limits_{i \in {\rm Cdfall}} \!\!  \sfd_{\lcross(i,\sigma)+\lnest(i,\sigma), \,\lnest(\sigma^{-1}(i),\sigma) }
   \,
   \prod\limits_{i \in {\rm Cdrise}} \!\!  \sfc_{\ucross(i,\sigma),\,{\rm unest}(i,\sigma)}
   \, \prod\limits_{i \in {\rm Fix}} \sfe_{{\rm lev}(i,\sigma)}
   \;.
   \qquad
 \label{def.Qn.secondmasterdual}
\end{eqnarray}
\end{proposition}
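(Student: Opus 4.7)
The plan is to derive \eqref{def.Qn.secondmasterdual} from Theorem~\ref{thm.permutations.Jtype.final2} by applying the involution $\zeta$ defined in \eqref{zeta} to the sum \eqref{def.Qn.secondmaster}. Since $\zeta(\sigma)=\rho\sigma\rho^{-1}$ with $\rho(i)=n+1-i$, the map $\zeta$ is an involution of $\Sym_n$ that preserves the cycle structure; in particular $\cyc(\zeta(\sigma))=\cyc(\sigma)$. Geometrically, $\zeta$ rotates the arc diagram of $\sigma$ by $180$ degrees, sending the vertex $i$ to the vertex $n+1-i$ and flipping arcs above the axis to arcs below, as illustrated in Figure~\ref{fig.pictorial}.

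From this geometric description the following vertex-by-vertex correspondences are immediate: if $i$ is a cval (respectively cpeak, cdrise, cdfall, fixed point) of $\sigma$, then $n+1-i$ is a cpeak (respectively cval, cdfall, cdrise, fixed point) of $\zeta(\sigma)$. For the refined crossing and nesting statistics one has $\ucross(i,\sigma)=\lcross(n+1-i,\zeta(\sigma))$, $\unest(i,\sigma)=\lnest(n+1-i,\zeta(\sigma))$, and symmetrically with the upper and lower roles interchanged; and $\lev(i,\sigma)=\lev(n+1-i,\zeta(\sigma))$ at fixed points. Finally, because $\zeta(\sigma)^{-1}(n+1-i)=n+1-\sigma^{-1}(i)$, one obtains $\unest(\sigma^{-1}(i),\sigma)=\lnest(\zeta(\sigma)^{-1}(n+1-i),\zeta(\sigma))$, which handles the asymmetric second index appearing on the $\sfd$-weight of a cdrise.

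With these identities in hand, the change of variable $\sigma\mapsto\zeta(\sigma)$ in \eqref{def.Qn.secondmaster} together with the reindexing $i\mapsto n+1-i$ inside each product converts the product over $\mathrm{Cval}$ into a product over $\mathrm{Cpeak}$ with weight $\sfa_{\lcross+\lnest}$, the product over $\mathrm{Cpeak}$ into a product over $\mathrm{Cval}$ with weight $\sfb_{\ucross,\unest}$, the product over $\mathrm{Cdfall}$ into a product over $\mathrm{Cdrise}$ with weight $\sfc_{\ucross,\unest}$, and the product over $\mathrm{Cdrise}$ into a product over $\mathrm{Cdfall}$ with weight $\sfd_{\lcross+\lnest,\,\lnest(\sigma^{-1}(i),\sigma)}$. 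Meanwhile the factor $\lambda^{\cyc(\sigma)}$ and each fixed-point factor $\sfe_{\lev(i,\sigma)}$ are preserved verbatim. Matching this against the right-hand side of \eqref{def.Qn.secondmasterdual} yields the claim.

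The main, and essentially only, obstacle is the bookkeeping for the upper/lower swap of the refined crossing and nesting statistics. This reduces to a routine check on quadruplets $i<j<k<l$: an upper crossing $k=\sigma(i)$, $l=\sigma(j)$ of $\sigma$ is transported by $\zeta$, via the renaming $i'=n+1-l$, $j'=n+1-k$, $k'=n+1-j$, $l'=n+1-i$, to a lower crossing $i'=\zeta(\sigma)(k')$, $j'=\zeta(\sigma)(l')$ of $\zeta(\sigma)$, and the analogous checks for nestings and for the level of a fixed point are entirely parallel.
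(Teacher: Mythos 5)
Your proposal is correct and follows essentially the same route as the paper: both apply the rotation $\zeta=(\cdot^r)^c$ to the defining sum \eqref{def.Qn.secondmaster} and track how each cyclic type and each refined crossing/nesting/level statistic is exchanged under $i\mapsto n+1-i$. Your added observations (that $\zeta$ is conjugation by $i\mapsto n+1-i$, hence preserves $\cyc$, and the explicit quadruplet check) only make explicit what the paper calls obvious from the geometric interpretation.
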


\begin{proof}
For $\sigma\in \Sym_n$
let $\tau=\zeta(\sigma)$ be the reversal combined with complementation of $\sigma$. For  $i\in [n]$  let  $i^c=n+1-i$, then
the following properties are obvious (Figure \ref{fig.pictorial})
\begin{itemize}
\item $i\in {\rm Cval}(\tau)\Leftrightarrow i^c\in {\rm Cpeak}(\sigma)$;
\item $i\in {\rm Cdrise}(\tau)\Leftrightarrow i^c\in {\rm Cdfall}(\sigma)$;
\item $i\in {\rm Fix}(\tau)\Leftrightarrow i^c\in {\rm Fix}(\sigma)$;
\end{itemize}
and
\begin{itemize}
	\item $\ucross(i,\tau)=\lcross(i^c,\sigma)$ and  $\lcross(i,\tau)=\ucross(i^c,\sigma)$;
	\item ${\rm unest}(i,\tau)=\lnest(i^c,\sigma)$ and
	$\lnest(i,\tau)={\rm unest}(i^c,\sigma)$;
	\item $\lev(i,\tau)=\lev(i^c,\sigma)$;
	\item ${\rm unest}(\tau^{-1}(i),\tau)=\lnest(\sigma^{-1}(i^c),\sigma)$ and
 $\lnest(\tau^{-1}(i),\tau)={\rm unest}(\sigma^{-1}(i^c),\sigma)$.
\end{itemize}
Moreover,  the geometric interpretation of $\zeta$ 
implies straightforwardly 
that $\cyc(\tau)=\cyc(\sigma)$.
Thus  we derive Eq.\eqref {def.Qn.secondmasterdual} 
from Eq. \eqref{def.Qn.secondmaster}.
\end{proof}

\section{Proof of the main results}

\subsection{Proof of Theorem~\ref{mainresult:1}}


\begin{lemma}\label{lem:pexear}For $\sigma\in \Sym_n$,
we have
\begin{enumerate}[a)]
\item $\exc\,\sigma=\cval\,\sigma+{\rm cdrise}\,\sigma$;
\item
$i\in [n]$ is a pure excedance of $\sigma$ if and only if
 $i$ is a cval and $\ucross(i, \sigma)=0$;
 \item
$i\in [n]$ is an eareccpeak (ear) of $\sigma$ if and only if
 $i$ is a cpeak and $\lnest(i, \sigma)=0$.
\end{enumerate}
\end{lemma}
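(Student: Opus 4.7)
The three assertions in Lemma~\ref{lem:pexear} are all of the form ``decode the defining condition of a pattern statistic in terms of the five cycle types and the refined crossing/nesting counts,'' so I would prove each one by direct unravelling of the definitions. The plan is to treat the three parts separately.

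For part (a), I would observe that $\sigma(i)>i$ forces $\sigma^{-1}(i)\ne i$, and then split on whether $\sigma^{-1}(i)<i$ or $\sigma^{-1}(i)>i$. These two cases are by definition exactly $\cdrise$ and $\cval$, and conversely every $\cval$ and every $\cdrise$ satisfies $\sigma(i)>i$. Cycle peaks, cycle double falls, and fixed points all fail $\sigma(i)>i$. Hence $\exc\,\sigma=\cval\,\sigma+\cdrise\,\sigma$.

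For part (b), suppose $i$ is a pure excedance, so $\sigma(i)>i$ and $\sigma(j)\notin[i,\sigma(i)]$ for all $j<i$. The condition $\sigma(j)\ne i$ for $j<i$ says $\sigma^{-1}(i)>i$, so combined with $\sigma(i)>i$ it forces $i\in\mathrm{Cval}$, ruling out the cdrise alternative from part (a). Given that $i$ is a cval, the remaining constraint $\sigma(j)\notin(i,\sigma(i))$ for $j<i$ reads: there is no $j<i$ and no $k$ with $i<k<\sigma(i)$ satisfying $k=\sigma(j)$, which is precisely $\ucross(i,\sigma)=0$ by the definition of the refined upper crossing (with $l=\sigma(i)$ playing the fourth-coordinate role). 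The converse reverses the same steps, using that being a cval supplies $\sigma^{-1}(i)>i$ to upgrade the open interval back to the closed one.

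For part (c), I would start from $\lnest(i,\sigma)=0$. The definition says there is no quadruplet $i'<j<i<l$ with $i'=\sigma(l)$ and $j=\sigma(i)$; fixing $j=\sigma(i)$ (forced automatically since $i$ is a cpeak, so $\sigma(i)<i$), this reduces to: for every $l>i$, $\sigma(l)\ge\sigma(i)$, i.e., by injectivity $\sigma(l)>\sigma(i)$. That is exactly the antirecord condition at $i$. To rule out that $i$ is simultaneously a record, I would use that $i\in\mathrm{Cpeak}$ provides $j=\sigma^{-1}(i)<i$ with $\sigma(j)=i>\sigma(i)$, which violates the record condition; hence $i$ is an exclusive antirecord, and together with $i\in\mathrm{Cpeak}$ this gives $i\in\mathrm{Ear}$. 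Conversely, if $i\in\mathrm{Ear}$ then $i\in\mathrm{Cpeak}$ by definition, and $\sigma(l)>\sigma(i)$ for all $l>i$ kills every potential lower nesting witnessed at third position $i$.

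No step poses a real obstacle; the only point requiring a touch of care is the conversion between the closed interval $[i,\sigma(i)]$ appearing in the definition of pure excedance and the open one $(i,\sigma(i))$ implicit in $\ucross(i,\sigma)=0$, which is handled cleanly by first using the endpoint $\sigma(j)\ne i$ to deduce $i\in\mathrm{Cval}$ and then absorbing the other endpoint by injectivity of $\sigma$.
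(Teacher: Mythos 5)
Your proposal is correct and follows essentially the same route as the paper: all three parts are proved by directly unravelling the definitions of excedance, pure excedance, and eareccpeak against the cycle classification and the refined $\ucross$/$\lnest$ counts. Your treatment is in fact slightly more careful than the paper's, notably in separating the endpoint $\sigma(j)=i$ of the interval $[i,\sigma(i)]$ (which yields the cval condition) from the open part (which yields $\ucross(i,\sigma)=0$), and in explicitly noting that a cycle peak can never be a record, so that ``antirecord'' upgrades to ``exclusive antirecord'' for free.
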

\begin{proof} Let $\sigma\in \Sym_n$ and $i\in [n]$.
\begin{enumerate}[a)]
\item If $i$ is an excedance, i.e., $\sigma(i)>i$, then 
$\sigma^{-1}(i)>i$ or $\sigma^{-1}(i)<i$, namely
 $i$ is either a double rise or a cycle valley. 
 Inversely, if  $i$ is a double rise or a cycle valley, then $\sigma(i)>i$. 
\item
By definition, $i\in [n]$ is a pure excedance of $\sigma$ if and only if
$\sigma(i)>i$, $\sigma^{-1}(i)>i$ and $\forall j<i$ we have 
$\sigma(j)\notin [i, \sigma(i)]$, that means 
 $i$ is a cval and $\ucross(i, \sigma)=0$.
 \item By definition, 
$i\in [n]$ is an eareccpeak of $\sigma$ if and only if
there are  $j<i$ and $k<i$ such that 
$\sigma(j)=i$ and $\sigma(i)=k$
 and $\forall l>i,\; \sigma(l)>\sigma(i)$, that is, 
 $i$ is an exlusive anitirecord and  cycle peak with 
 $\lnest(i, \sigma)=0$.
\end{enumerate}
\end{proof}

Now we are ready to prove Theorem~\ref{mainresult:1}.
Our idea is to first  specialize the J-fractions in Theorems \ref{thm.permutations.Jtype.final1} 
and \ref {thm.permutations.Jtype.final2} to obtain Eq.~\eqref{cf:generalA} and then apply the corresponding combinatorial interpretations.
\begin{itemize}
\item[a)] For Eq. \eqref{Equ1}
we obtain  the J-fraction~\eqref{cf:generalA} by
taking the following substitutions in \eqref{eq.thm.permutations.Jtype.final1},
\begin{equation}\label{case1}
\begin{cases}
\sfa_{\ell, \ell'}&=t\,(\ell> 0), \quad \sfa_{0, \ell'}=\lambda t;\\
\sfb_{\ell, \ell'}&=1\,(\ell'> 0),\quad \sfb_{\ell, 0}=y;\\
\sfc_{\ell, \ell'}&=1, \quad\sfd_{\ell, \ell'}=t, \quad\sfe_{\ell}=w.
\end{cases}
\end{equation}
Then it is easy to see that Eq.~\eqref{def.Qn.firstmaster} reduces to the combinatorial interpretation  \eqref{Equ1} by Lemma~\ref{lem:pexear}.
\item[b)] For \eqref{Equ2}
we obtain  the J-fraction~\eqref{cf:generalA} by
taking the following substitutions in \eqref{eq.thm.permutations.Jtype.final2},
\begin{equation}\label{case2}
\begin{cases}
\sfa_{\ell}&=t, \quad\sfb_{\ell, \ell'}=1\,(\ell'> 0), \quad\sfb_{\ell, 0}=y;\\
\sfc_{\ell, \ell'}&=1, \quad\sfd_{\ell, \ell'}=t, \quad\sfe_{\ell}=w/\lambda, 
\end{cases}
\end{equation}
Then  Eq.~\eqref{def.Qn.secondmaster} reduces to the combinatorial interpretation \eqref{Equ2} 
by Lemma~\ref{lem:pexear}.
\item[c)] For  \eqref{Equ3}
we first obtain the J-fraction~\eqref{cf:generalA} by
taking the following substitutions in \eqref{eq.thm.permutations.Jtype.final2},
\begin{equation}
\begin{cases}\label{case3}
\sfa_{\ell}&=1, \quad \sfb_{\ell, \ell'}=t\,(\ell> 0),\quad  \sfb_{0, \ell'}=ty;\\
\sfc_{\ell, \ell'}&=t, \quad \sfd_{\ell, \ell'}=1,
\quad\sfe_{\ell}=w/\lambda,
\end{cases}
\end{equation}
Then  Eq.~\eqref{def.Qn.secondmasterdual} reduces to the combinatorial interpretation  \eqref{Equ3} by Lemma~\ref{lem:pexear} and Eq.~\eqref{eq:defpcyc}.
\end{itemize}
\qed

\subsection{Proof of Theorem~\ref{mainresult:2}}
For  $\sigma=\sigma(1)\sigma(2)\cdots\sigma(n)\in \Sym_{n}$  with convention $\infty$--0, i.e., $\sigma(0)=\infty$ and  $\sigma(n+1)=0$,
an index  $i<n$ is an ascent
if $\sigma(i)<\sigma(i+1)$ and an ascent of  type  2
if $\sigma(i)$ is also a left-to-right minimum.  We denote the number of ascents (resp. ascente of type 2 and  left-to-right minima) of $\sigma$ by $\asc\, \sigma$ (resp. $\asc_2\, \sigma$ and $\lrm$). 
A double descent $\sigma(i)$ ($i\in [n]$), i.e., 
$\sigma(i-1)>\sigma(i)>\sigma(i+1)$, 
is called a \emph{foreminimum} of $\sigma$ if it is also a left-to-right minimum. Denote the number of foreminima of $\sigma$ by $\fmin\, \sigma$. It is routine to verify
\begin{align}\label{eq:comp}
(\des_2, \des, \fmax, \rec)\, \sigma=
(\asc_2, \asc, \fmin, \lrm)  \,\sigma^{\rm{c}},
\end{align}
where $\sigma^{\rm{c}}$ is the complementation 
of $\sigma$ (see \eqref{c-r}).

\begin{lemma}\label{des2=pcyc}
There is a bijection $\varphi:\Sym_n\to \Sym_n$ such that
\begin{equation}\label{eq:lemma2.1}
(\des_2, \des, \fmax, \rec)\,\varphi(\sigma)=
(\pcyc, \exc, \fix, \cyc)\,\sigma.
\end{equation}
\end{lemma}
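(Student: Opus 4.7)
The plan is to exhibit $\varphi$ as a variant of Foata's first fundamental transformation. Given $\sigma\in\Sym_n$, decompose $\sigma$ into its disjoint cycles; for each cycle $C$ of length $\ell$ with largest element $m$, list its elements as
\[
(m,\ \sigma^{-1}(m),\ \sigma^{-2}(m),\ \ldots,\ \sigma^{-(\ell-1)}(m)),
\]
i.e.\ start at the maximum and traverse the cycle in the reverse direction. Ordering the cycles by increasing value of their maxima and concatenating the listings (without parentheses) produces $\varphi(\sigma)\in\Sym_n$ in one-line notation. This is a bijection: for $\tau\in\Sym_n$ with records at positions $p_1<\cdots<p_k$, the blocks $\tau(p_i),\tau(p_i+1),\ldots,\tau(p_{i+1}-1)$ become the cycles of $\sigma=\varphi^{-1}(\tau)$, with $\tau(p_i)$ as the maximum, and $\sigma$ is reconstructed via $\sigma(\tau(p_i+j))=\tau(p_i+j-1)$ for $j\ge 1$ and $\sigma(\tau(p_i))=\tau(p_{i+1}-1)$.

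The identity $\rec(\varphi(\sigma))=\cyc(\sigma)$ is immediate: since cycles are ordered by strictly increasing maxima and every entry after the leading $m$ inside a cycle's listing is $<m$, the records of $\varphi(\sigma)$ are exactly the $\cyc(\sigma)$ cycle-maxima. For $\des(\varphi(\sigma))=\exc(\sigma)$, any transition between the listings of consecutive cycles is an ascent (the last entry of a cycle's listing lies in that cycle and is thus at most its maximum, which is strictly below the next cycle's maximum). Within the listing of $C$, the transition from $\sigma^{-j}(m)$ to $\sigma^{-(j+1)}(m)$ is a descent iff, setting $x=\sigma^{-(j+1)}(m)$, we have $\sigma(x)>x$, i.e.\ $x$ is an excedance of $\sigma$. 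As $j$ runs over $0,1,\ldots,\ell-2$, $x$ runs over $C\setminus\{m\}$, and $m$ itself is never an excedance since $\sigma(m)\in C$ and $m=\max(C)$. Summing over cycles yields $\des(\varphi(\sigma))=\exc(\sigma)$.

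For the remaining two identities, every record of $\varphi(\sigma)$ is the leading entry $m=\max(C)$ of some cycle's listing, say at position $p$. With the boundary conventions $\varphi(\sigma)(0)=0$ and $\varphi(\sigma)(n+1)=n+1$, one has $\varphi(\sigma)(p-1)<m$ always, while $\varphi(\sigma)(p+1)=\sigma^{-1}(m)<m$ if $|C|>1$ and $\varphi(\sigma)(p+1)$ equals either the maximum of the next cycle or $n+1$ (both $>m$) if $|C|=1$. Hence such an $m$ is a foremaximum iff $|C|=1$ and is a descent of type~$2$ iff $|C|>1$, giving $\fmax(\varphi(\sigma))=\fix(\sigma)$ and $\des_2(\varphi(\sigma))=\cyc(\sigma)-\fix(\sigma)=\pcyc(\sigma)$. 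This establishes \eqref{eq:lemma2.1}.

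The main subtlety lies in choosing the right convention up front: replacing $\sigma^{-1}$-traversal by $\sigma$-traversal, or ordering cycles by minima rather than maxima, each breaks one of the four identities (as can already be checked on $\Sym_3$). Once the triple $(\max,\ \sigma^{-1}\text{-traversal},\ \text{increasing maxima})$ is fixed, all four statistic identities reduce to the straightforward local analyses above.
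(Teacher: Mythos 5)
Your proof is correct and follows essentially the same route as the paper: both realize $\varphi$ as a variant of Foata's fundamental transformation obtained by dropping the parentheses from a standard cycle factorization. The paper writes each cycle starting from its minimum, orders cycles by decreasing minima, and then composes with complementation to turn the ascent-type identities into descent-type ones; your choice of maxima, $\sigma^{-1}$-traversal and increasing maxima builds that complementation into the conventions, so the four identities come out directly.
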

\begin{proof}
We recall a variation of   \emph{Foata's transformation  fondamentale} 
$\phi:\Sym_n\to \Sym_n$, see \cite{FS70} and 
\cite[p.23]{St97}.
Starting from a permutation $\sigma\in\Sym_n$ we factorize it  into
 disjoint cycles, say $\sigma=C_1\, C_2\,\ldots\, C_k$, where
each  cycle $C_i$ is  written  as a sequence
$C_i=(a_i, \sigma(a_i), \ldots, \sigma^{l-1}((a_i))$
with  $\sigma^{l}(a_i)=a_i$ for some $l\in [n]$ and $i\in [k]$. We say that the factorization is
\emph{standard} if
\begin{itemize}
\item  the letter  $a_i$ is the smallest  of the cycle $C_i$,
\item  the sequence $a_1> \cdots> a_k$ is  decreasing.
\end{itemize}
The permutation  $\phi(\sigma)$ is   obtained by dropping
the parentheses in the standard factotorization of $\sigma$.
Note  that  $\asc_2\phi(\sigma)=\pcyc(\sigma)$. Indeed,
  an index $i$ is an ascent of type 2 of $\tau:=\phi(\sigma)$ if  and only if $\tau(i)$ is the smallest element of a pure cycle of $\sigma$.
Thus the transformation  $\phi$ has the following property:
\begin{align}\label{eq:lemma2.1a}
(\asc_2, \asc, \fmin, \lrm)\, \phi(\sigma) =(\pcyc, \exc, \fix, \cyc)\, \sigma.
\end{align}
Let $\varphi(\sigma)=(\phi(\sigma))^c$.
 We  obtain \eqref{eq:lemma2.1} by combining \eqref{eq:lemma2.1a} and \eqref{eq:comp}.
\end{proof}

\begin{rem}
Since $ \des_2=\rec-\fmax$ and $\pcyc=\cyc-\fix$,  the equality
$\rec(\varphi(\sigma))=\cyc(\sigma) $ is redundant in \eqref{eq:lemma2.1}.

\end{rem}
For example, for  $\sigma=23146875\in \Sym_8$, then $\stan(\sigma)=(7)(568)(4)(123)$ and $\phi(\sigma)=75684123$ and $ \varphi(\sigma)=24315876$, it is easy to see that
\begin{align*}
(\des_2, \des, \fmax, \rec)\varphi(\sigma)
&=(\asc_2, \asc, \fmin, \lrm)\, \phi(\sigma) \\
&=(\pcyc, \exc, \fix, \cyc)\, \sigma=(2, 4, 2, 4).
\end{align*}


\begin{proof}[Proof of Theorem~\ref{mainresult:2}]
 The idenitities~\eqref{form1}, \eqref{form2} and \eqref{form3} follow directly from Theorem~\ref{mainresult:1}.  By  Lemma~\ref{des2=pcyc} we derive
Eq.~\eqref{form4}.  Finally, the equivalence of \eqref{eq:zeng93}, \eqref{eq:gfB}  and \eqref{form1} follows from
 \cite[Theorem~1]{Zeng93} and \cite[Theorem~3]{Zeng93}.

\end{proof}
\subsection{Proof of Theorem~\ref{gamma-thm}}

As $D_n(t,\lambda,y)=A_n(t,\lambda, y,0)$, il follows from \eqref{cf:generalA} that 
\begin{align}\label{cf:D}
\sum_{n\geq 0}z^n
D_n(t, \lambda, y)
 \;=\;
 \cfrac{1}
 {1-0z-\cfrac{t\lambda y\,z^2}
 {1-(t+1)z-\cfrac{t(\lambda+1)(y+1)\,z^2}
 {\cdots}}}
 \end{align}
 with $\gamma_n=n(t+1)$ and $\beta_n=t(\lambda+n-1)(y+n-1)$.

Clearly, 
Theorem~\ref{gamma-thm} is proved if we show  that
for each combinatorial interpretation of $\gamma_{n,k}(\lambda, y)$ in  \eqref{gamma1}-\eqref{gamma3}  the corresponding generating function has the following J-fraction expansion
\begin{align}\label{cf:gamma}
\sum_{n\geq 0}z^n\sum_{k\geq 0}\gamma_{n,k}(\lambda, y)t^k
\;=\;
 \cfrac{1}
 {1-0z-\cfrac{t\lambda y\,z^2}
 {1-1\cdot z-\cfrac{t(\lambda+1)(y+1)\,z^2}
 {\cdots}}}
 \end{align}
 with $\gamma_n=n$ and $\beta_n=t(\lambda+n-1)(y+n-1)$.
  \begin{enumerate}[a)]
\item For Eq.\eqref{gamma1}, 
taking the following substitutions in in two sides of \eqref{eq.thm.permutations.Jtype.final1},
\begin{equation}\label{case1bis0}
\begin{cases}
\sfa_{\ell, \ell'}&=t\,(\ell> 0), \quad \sfa_{0, \ell'}=\lambda t;\\
\sfb_{\ell, \ell'}&=1\,(\ell'> 0),\quad \sfb_{\ell, 0}=y;\\
\sfc_{\ell, \ell'}&=1, \quad\sfd_{\ell, \ell'}=0, \quad\sfe_{\ell}=0,
\end{cases}
\end{equation}
we obtain
\begin{align}
\sum_{n\geq 0} z^n \sum_{\sigma\in \SDE_n^*}t^{\exc \sigma} \lambda^{\pex \sigma} y^{\ear\sigma}\;=\;
 \cfrac{1} {1-0z-\cfrac{t\lambda y\,z^2}
 {1-1\cdot z-\cfrac{t(\lambda+1)(y+1)\,z^2}
 {\cdots}}}
 \end{align}
 with $\gamma_n=n$ and $\beta_n=t(\lambda+n-1)(y+n-1)$.

\item For Eq.\eqref{gamma2},
taking the following substitutions in two sides of \eqref{eq.thm.permutations.Jtype.final2},
\begin{equation}
\begin{cases}
\sfa_{\ell}&=t, \quad\sfb_{\ell, \ell'}=1\,(\ell'> 0), \quad\sfb_{\ell, 0}=y;\\
\sfc_{\ell, \ell'}&=1, \quad\sfd_{\ell, \ell'}=0, \quad\sfe_{\ell}=0, 
\end{cases}
\end{equation}
we obtain the smae J-fraction for $\sum_{\sigma\in \SDE_n^*}t^{\exc \sigma} \lambda^{\cyc \sigma} y^{\ear\sigma}$.
\item For Eq.\eqref{gamma3},   we use the dual form 
\eqref{def.Qn.secondmasterdual} for the combinatorial interpretation of
$\widehat{Q}_n(\bsfa,\bsfb,\bsfc,\bsfd,\bsfe,\lambda)$.
Taking the following substitutions in  two sides of \eqref{eq.thm.permutations.Jtype.final2}  
\begin{equation}
\begin{cases}
\sfa_{\ell}&=1, \quad\sfb_{\ell, \ell'}=t\,(\ell'> 0), \quad\sfb_{\ell, 0}=ty;\\
\sfc_{\ell, \ell'}&=0, \quad\sfd_{\ell, \ell'}=1, \quad\sfe_{\ell}=0, 
\end{cases}
\end{equation}
   we obtain the same J-fraction for $\sum_{\sigma\in \SDE_n^*}t^{\exc \sigma} \lambda^{\cyc \sigma} y^{\pex\sigma}$.
\end{enumerate}
\qed

\subsection{Proof of Theorem~\ref{thm1.8}}

Given a permutation $\sigma=\sigma(1)\cdots \sigma(n)$,	
    if $i$ is a descent, i.e., $\sigma(i)>\sigma(i+1)$, 
	 the letters  $\sigma(i)$ and
	$\sigma(i+1)$  are  called {\em descent top} and  {\em descent bottom}, respectively.
For entry $i\in \{1, \ldots, n\}$, we define the
refined patterns, see \cite{SZ12},
\begin{subequations}
\begin{align}\label{31-2}
(31-2)(i,\sigma)&=\#\{j:  1<j<\sigma^{-1}(i) \text{ and } \sigma(j)<i<\sigma(j-1)\};\\
(2-31)(i,\sigma)&=\#\{j:  \sigma^{-1}(i)<j<n \text{ and } \sigma(j+1)<i<\sigma(j)\};\label{2-31}
\end{align}
and  also the refined cycle statistics:
   \begin{align}
\icross(i,\sigma)&=\begin{cases}
\ucross(i,\sigma)&\textrm{if} \; i\in \Cval\sigma\cup\Cdrise\sigma;
\\
\lcross(i,\sigma) &\textrm{if} \; i\in \Cpeak\sigma\cup \Fix\sigma;
\\
\lcross(i,\sigma)+1 &\textrm{if} \; i\in  \Cdfall\sigma.
\end{cases}
\label{connect2}
\\
\cross(i,\sigma)&=
\begin{cases}
\ucross(i,\sigma)&\textrm{if} \; i\in \Cval\sigma\cup \Fix\sigma;
\\
\ucross(i,\sigma)+1 &\textrm{if} \; i\in \Cdrise\sigma;
\\
\lcross(i,\sigma)&\textrm{if} \; i\in \Cpeak\sigma \cup \Cdfall\sigma.
\end{cases}\label{connect1}\\
\nest(i,\sigma)&=\begin{cases}
\unest(i,\sigma)&\textrm{if} \; i<\sigma(i);
\\
\lnest(i,\sigma)&\textrm{if} \; i>\sigma(i);
\\
\lev(i,\sigma) &\textrm{if} \; i=\sigma(i).
\end{cases}\label{connect3}
\end{align}
\end{subequations}
Note that $\ucross(j,\sigma)$ and $\unest(j,\sigma)$ can be nonzero
only when $j$ is a cycle valley or a cycle double rise,
while $\lcross(k,\sigma)$ and $\lnest(k,\sigma)$ can be nonzero
only when $k$ is a cycle peak or a cycle double fall. Thus
$\icross(i,\sigma)=\cross(i,\sigma)=0$ if $i\in \Fix \sigma$.

\subsubsection{The bijection $\Phi_1$}
 Given a permutation $\sigma=\sigma(1)\cdots \sigma(n)$, we proceed as follows:
\begin{itemize} 
\item Determine the sets of descent bottoms $F$  and descent tops $F'$,
 and their complements $G$ and $G'$, respectively;
\item  let  $f$ and  $g$ be  the increasing 
arrangements   of $F$ and  $G$, respectively;
\item construct the biword $f \choose f'$:
for each $j\in f$  starting from the \textbf{largest} (right), the entry in $f'$  below $j$ is the $((31-2)(j,\sigma)+1)$th  largest entry of $F'$  that is
    larger than $j$ and not yet chosen.
    \item construct the biword $g \choose g'$: for each $j$ in $g$ starting from the \textbf{smallest} (left), the entry in $g'$ below $j$
    is the $((31-2)(j,\sigma)+1)$th smallest entry of $G'$ that is not larger than $j$ and not yet chosen;
   
  \item   Form the biword $w=\left({f \atop f'}~{g \atop g'}\right)$ by concatenating
    the biwords ${f\choose g}$, and ${f'\choose g'}$.
    Sorting  the columns so that the top row is in increasing order,
    we obtain the permutation $\tau=\Phi_1(\sigma)$ as the bottom row of the rearranged biword.
\end{itemize}

	\begin{example}\label{ex1}
For  $\sigma=4~7~1~8~6~3~2~5$  we have 
$$
\begin{tabular}{c|c|c|c|c|c|c|c|c}
$\sigma=$&4&7&1&8&6&3&2&5\\
\hline
$(31-2)(i,\sigma)$&0&0&0&0&1&1&1&2\\
\hline
$(2-31)(i,\sigma)$&2&1&0&0&0&0&0&0\\
\end{tabular}\quad
{\Phi_1 \atop \longrightarrow} \quad
\begin{tabular}{c|c|c|c|c|c|c|c|c}
$\tau=$&8&3&6&1&5&7&2&4\\
\hline
$\nest(i,\tau)$   &0&1&1&0&2&0&1&0\\
\hline
$\icross(i,\tau)  $&0&0&0&0&0&1&0&2\\
\end{tabular}
$$
\medskip
\begin{enumerate}
\item Determine the descent bottoms and descent tops  of $\sigma$ and their complements:
\begin{align*}
F&=\{1, 2, 3, 6\}, \quad F'=\{3,6,7,8\},\\
G&=\{4, 5, 7,8\},\quad G'=\{1,2,4,5\}.
\end{align*}
\item Compute the statistics $(31-2)(i,\sigma)$ and $(2-31)(i,\sigma)$ for $i=1,\ldots, 8$, see the above table (left).
\item  Form the biwords, for clarity we write the non-zero $(31-2)(i,\sigma)$ numbers in $f$ and $g$, respectively, as subscripts of their corresponding letters:
$$
	{f\choose f'}
	= \left(
	{1 \atop 8}~
	{2_1 \atop 3}~
	{3_1 \atop 6}~
	{6_1 \atop 7}
	\right),
	\quad
	{g\choose g'}
	= \left(
	{4 \atop 1}~
	{5_2 \atop 5}~
	{7 \atop 2}~
	{8 \atop 4}
	\right).
	$$
\item By concatenation
	$$
	\tau
	= \left( {f \atop f'}~{g \atop g'} \right)
	= \left(
	{1 \atop 8}~
	{2 \atop 3}~
	{3 \atop 6}~
	{4 \atop 1}~
	{5 \atop 5}~
	{6 \atop 7}~
	{7 \atop 2}~
	{8 \atop 4}
	\right).
	$$
	
\end{enumerate}
\end{example}	

\begin{remark}
    We can order  $f'$ and $g'$  in the similar way as in \cite{CSZ97}. Recall that
    an {\em inversion top number} (resp. {\em inversion bottom number}) of a letter $x:=\sigma(i)$
    in the word $\sigma$ is the number of occurrences of inversions of form $(i,j)$ (resp $(j,i)$).
Then  $f'$ (resp. $g'$) is the permutation of descent tops (resp. nondescent tops)  in $\sigma$ such that the inversion bottom (resp. top) number of each
    letter $x$ in $f'$ (resp. $g'$) that below $i$ in $f$ (resp. $g$) is  $(31-2)(i,\sigma)$.
\end{remark}

	\begin{lemma}\label{corteel2}
 Let $\Phi_1(\sigma)=\tau$ for $\sigma\in \Sym_n$. For $i\in \{1, \ldots, n\}$,
 	\begin{subequations}
	   \begin{align}
	   \nest(i,\tau)&=(31-2)(i,\sigma);\label{312-nest}\\
	   \icross(i,\tau)&=(2-31)(i,\sigma).\label{231-icross}
	   \end{align}
	   \end{subequations}
 	\end{lemma}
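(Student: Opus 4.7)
The plan is to carry out a case analysis on the membership of $i$ in the four sets $F\cap F'$, $F\cap G'$, $G\cap F'$, and $G\cap G'$, since this membership determines the cycle type of $i$ in $\tau:=\Phi_1(\sigma)$. Directly from the construction, $i\in F$ forces $\tau(i)\in F'$ with $\tau(i)>i$, while $i\in G$ forces $\tau(i)\in G'$ with $\tau(i)\leq i$; tracking the possible values of $\tau^{-1}(i)$ then yields the correspondences $F\cap F'\to\cdrise$, $F\cap G'\to\cval$, $G\cap F'\to\cpeak$, and $G\cap G'\to \fix\cup\cdfall$. Each of the two identities therefore reduces to the appropriate sub-statistic ($\unest$, $\lnest$, or $\lev$ for $\nest$; $\ucross$ or $\lcross$ for $\icross$), with the characteristic ``$+1$'' correction of $\icross$ entering only in the $\cdfall$ case.

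For the identity $\nest(i,\tau)=(31-2)(i,\sigma)$, I would first note that for $i\in F$, any $m\in G$ with $m<i$ satisfies $\tau(m)\leq m<i<\tau(i)$ and so contributes nothing to $\unest(i,\tau)$. The contributing $m\in F$ with $m<i$ are then counted by the pool dynamics: since $\tau(i)$ is defined to be the $(1+(31-2)(i,\sigma))$-th largest element of the current pool (the as-yet unassigned part of $F'\cap(i,n]$), exactly $(31-2)(i,\sigma)$ pool elements exceed $\tau(i)$, and each is forced to become $\tau(m)$ for some $m\in F$ with $m<i$ as the right-to-left sweep of $f$ continues. The case $i\in G$ is handled by the mirror argument applied to ${g\choose g'}$: the $(1+(31-2)(i,\sigma))$-th smallest rule combined with the observation that $l\in F$, $l>i$ forces $\tau(l)>l>i\geq\tau(i)$ shows that the only $l>i$ contributing to $\lnest$ (or to $\lev$ when $i$ is a fixed point) come from $l\in G$, and the pool count delivers the required equality.

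The identity $\icross(i,\tau)=(2-31)(i,\sigma)$ rests on a pool-size identity. With $T=|F'\cap(i,n]|$, $B=|F\cap(i,n]|$, and $C$ the number of descents of $\sigma$ whose top exceeds $i$ and whose bottom is strictly below $i$, classifying all descents by the position of $i$ relative to top and bottom yields $T-B=C+[i\in F]$, and in particular $C=(31-2)(i,\sigma)+(2-31)(i,\sigma)$. For $i\in F$ this translates to $|P(i)|-1=C$, so the number of pool elements strictly between $i$ and $\tau(i)$ is exactly $(2-31)(i,\sigma)$, and these are precisely the values that appear as $\tau(m)$ for $m\in F$, $m<i$ with $i<\tau(m)<\tau(i)$, which is $\ucross(i,\tau)$. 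For $i\in G\cap F'$ (a $\cpeak$), the mirror argument applied to the ${g\choose g'}$ pool gives the high pool elements (those $>\tau(i)$) all lying in $(\tau(i),i)$ since $i\notin G'$, and they are exactly the values $\tau(l)$ for $l>i$ with $\tau(i)<\tau(l)<i$, contributing $\lcross(i,\tau)=(2-31)(i,\sigma)$.

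The main obstacle will be the case $i\in G\cap G'$, where $i$ itself sits in the pool at step $i$ (being in $G'$, $i$ cannot have been picked before step $i$). If $\tau(i)=i$, then all pool elements are $\leq i$, which forces $(2-31)(i,\sigma)=0$, matching $\icross(i,\tau)=0$ for a fixed point. If instead $\tau(i)<i$, so that $i$ is a $\cdfall$ of $\tau$, then exactly one of the pool elements above $\tau(i)$ is $i$ itself, and this value will later be picked as $\tau(l)=i$ rather than as $\tau(l)<i$; consequently $\lcross(i,\tau)=(2-31)(i,\sigma)-1$, and the definitional ``$+1$'' in $\icross$ for $\cdfall$ precisely compensates for $i$ occupying this extra slot of the pool, restoring equality with $(2-31)(i,\sigma)$.
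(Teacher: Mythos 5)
Your proposal is correct, but there is nothing in the paper to compare it against: the paper states Lemma~\ref{corteel2} with no proof at all, implicitly deferring to the analogous ``biword'' lemmas of Clarke--Steingr\'{\i}msson--Zeng and Shin--Zeng that it cites. Your argument supplies exactly the missing verification, and it is sound. The case split $F\cap F'\to\cdrise$, $F\cap G'\to\cval$, $G\cap F'\to\cpeak$, $G\cap G'\to\Fix\cup\Cdfall$ is right (it is the content of the paper's later equation \eqref{exc-des}), and once it is in place the first identity is indeed just the statement that $\tau(i)$ is chosen as the $\bigl((31\text{-}2)(i,\sigma)+1\bigr)$-th extreme element of the current pool, so that exactly $(31\text{-}2)(i,\sigma)$ pool elements lie on the far side of $\tau(i)$ and each is later matched to an index on the correct side of $i$. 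The real work is in your pool-size identity $T-B=C+[i\in F]$ with $C=(31\text{-}2)(i,\sigma)+(2\text{-}31)(i,\sigma)$, obtained by classifying descents of $\sigma$ according to the position of $i$ relative to their top and bottom; I checked it and it is correct (and, as a bonus you do not mention, it shows $|P(i)|\geq(31\text{-}2)(i,\sigma)+1$, i.e.\ that $\Phi_1$ is well defined). Your treatment of the delicate $G\cap G'$ case is also right: when $\tau(i)<i$ the value $i$ itself occupies one of the $(2\text{-}31)(i,\sigma)$ pool slots above $\tau(i)$ but is eventually assigned as $\tau(l)=i$, so it is invisible to $\lcross(i,\tau)$, and the $+1$ in the definition \eqref{connect2} of $\icross$ on $\Cdfall$ restores the balance, while for a fixed point the pool degenerates and both sides vanish. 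The only cosmetic remark is that you should state explicitly (as you do implicitly) that for $m\in G$ with $m<i$ one has $\tau(m)\leq m<i$, and for $l\in F$ with $l>i$ one has $\tau(l)>l>i$, since these two inequalities are what confine each count to a single biword and make the pool enumeration exhaustive.
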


\begin{lemma}\label{bijection}
The mapping  $\Phi_1$ is a bijection on $\Sym_n$.
\end{lemma}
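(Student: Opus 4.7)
The plan is to prove bijectivity by constructing an explicit inverse $\Psi_1=\Phi_1^{-1}$. Given any $\tau\in \Sym_n$, the sets $F, F'$ used by the forward construction can be recovered directly from $\tau$: in $\Phi_1(\sigma)$ the position $j$ receives a value strictly larger than $j$ precisely when $j$ lies in $F$ (a descent bottom of $\sigma$), because $\binom{f}{f'}$ always places a larger entry below $j$ while $\binom{g}{g'}$ always places a smaller-or-equal entry. Hence $F=\{j\in[n]:\tau(j)>j\}$ (the excedance set of $\tau$), $F'=\tau(F)$, $G=[n]\setminus F$, and $G'=[n]\setminus F'$. Restricting $\tau$ to these two domains and listing the columns in increasing order of the top row recovers the biwords $\binom{f}{f'}$ and $\binom{g}{g'}$ that $\Phi_1$ would have produced.

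Next, I would read off the $(31\text{-}2)$-statistics of $\sigma$ from these biwords. Processing the columns of $\binom{f}{f'}$ from right to left (that is, $j\in F$ from largest to smallest), the value $(31\text{-}2)(j,\sigma)$ is the number of entries of $F'$ that exceed $\tau(j)$ and do not equal $\tau(j')$ for any $j'\in F$ with $j'>j$; an analogous count on $\binom{g}{g'}$ (processed from left to right) recovers $(31\text{-}2)(j,\sigma)$ for $j\in G$. The last step is to reconstruct $\sigma$ from the data $(F,F',\{(31\text{-}2)(j,\sigma)\}_{j=1}^n)$. This can be carried out by a sequential insertion of the values $n, n-1, \ldots, 1$ into a growing word, at each step placing the new value in the unique slot dictated by its membership in $F$ or in $G$ together with its prescribed $(31\text{-}2)$-value.

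The main obstacle is twofold. First, I must verify that the forward greedy construction is always well-defined, namely that at each stage of building $\binom{f}{f'}$ the required $\bigl((31\text{-}2)(j,\sigma)+1\bigr)$-th unused element of $F'$ larger than $j$ genuinely exists, and analogously for $\binom{g}{g'}$; this should follow from a counting argument showing that the number of descent tops of $\sigma$ that exceed $j$ is at least $(31\text{-}2)(j,\sigma)$ plus the number of descent bottoms of $\sigma$ larger than $j$. Second, I must show that the reconstruction from $(F,F',\{(31\text{-}2)(j,\sigma)\})$ yields a unique permutation with exactly the prescribed invariants; this can be handled by induction on $n$, adjoining the largest element and tracking how it modifies the biwords in a controlled way, in the spirit of the Clarke--Steingr\'{i}msson--Zeng bijection~\cite{CSZ97}. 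Once both points are settled, the identities $\Psi_1\circ\Phi_1=\id$ and $\Phi_1\circ\Psi_1=\id$ reduce to matching the biword data column by column, which is routine.
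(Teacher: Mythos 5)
Your plan is essentially the paper's own proof: it likewise inverts $\Phi_1$ by identifying the descent bottoms/tops of $\sigma$ with the excedance positions/values of $\tau$, reconstructing $\sigma$ by a sequential insertion procedure (the paper inserts $1,\ldots,n$ into descent blocks as openers, closers, insiders and outsiders guided by $\nest(i,\tau)$, which by Lemma~\ref{corteel2} encodes the same data as your $(31\text{-}2)$ values), and then arguing that a permutation is uniquely determined by these invariants. The only differences are cosmetic: the paper settles your second obstacle by a first-point-of-disagreement contradiction rather than induction on $n$, and, like your sketch, it leaves the well-definedness of the greedy step implicit.
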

\begin{proof}  
We construct the inverse $\Phi_1^{-1}$ similarly  as in \cite{CSZ97}.
Let $\Phi_1(\sigma)=\tau$.
Form two biwords $f \choose f'$ and $g \choose g'$, where $f$ (resp. $f'$, $g$, $g'$) is the set of
excedance positions (resp. excedance values, nonexcedance positions, nonexcedance values) of $\tau$, with $f$ and $g$ ordered increasingly and ${i\choose j}$ is a  column if $\tau(i)=j$.  We get $\sigma$ by constructing the descent blocks of $\sigma$, see \cite{CSZ97}.

For $i=1,\ldots, n$, we say that
 $i$ is an opener (resp. closer, insider, outsider) of $\tau$ if $i$ is a letter in  the word $fg'$
 (resp. $f'g,\;  ff,\; gg'$).

If 1 is an outsider, then put 1 as a block. And if 1 is an opener, then put $(\infty,1)$ as an uncomplete block.
For $i\geq 2$, we proceed as follows:
\begin{itemize}
\item  If $i$ is an opener, then put the uncomplete block $(\infty,i)$ to the left of $(\nest(i,\tau)+1)$th uncomplete block from the left.
\item  If $i$ is an outsider, then put $(i)$ as a block to the left of $(\nest(i,\tau)+1)$th uncomplete block from the left.
\item  If $i$ is an insider, then put $i$ into $(\nest(i,\tau)+1)$th uncomplete block from the left and to the right of $\infty$.
\item  if $i$ is a closer,  then replace the $\infty$ of $(\nest(i,\tau)+1)$th uncomplete block by $i$, then the block is complete.
\end{itemize}

After the entry $n$ has been constructed, the blocks are all complete and remove all parenthesis. 
Reading the entries from left to right, we immediately get the permutation $\sigma$.
Let $\Phi_1'(\tau)=\sigma$. Then we prove $\sigma=\sigma$, then $\Phi^{-1}=\Phi_1'$.
The set of descent bottoms (resp. descent tops, nondescent bottoms, nondescent tops) in $\sigma$ transfers to the set of excedance position (resp. excedance value, nonexcedence position, nonexcedence value) in $\tau$ by the operation $\Phi_1$,
and by $\Phi_1'$ it transfer to set of descent top (resp. descent bottom, nondescent top, nondescent bottom) in $\sigma$.
That is the set of descent top, descent bottom, nondescent top and  nondescent bottom in $\sigma$ are the same as those in $\sigma$.
Similarly we get $(31-2)(j,\sigma)=(31-2)(j,\sigma)$ and $(2-31)(j,\sigma)=(2-31)(j,\sigma)$ for $j\in[n]$.

Suppose $\sigma(j)=\sigma(j)$ for $j=1,2,\ldots,i-1$, and $\sigma(i)\neq \sigma(i)$.
Assume $\sigma(i)=k<l=\sigma(i)$.
If $\sigma(i-1)$ is a descent top, then $k,l$ are descent bottoms.
Then for $l$ in $\sigma$ and $\sigma$, since $\sigma(i-1)>k>l$, we have $(31-2)(l,\sigma)>(31-2)(l,\sigma)$, contradictory to $(31-2)(l,\sigma)=(31-2)(l,\sigma)$.
If $\sigma(i-1)$ is a nondescent top, then $k,l$ are nondescent bottoms.
Suppose $\sigma(j)=k$, then $\sigma(j-1)<k$ since $k$ is a nondescent bottom. And since $l>k$, then there exist two consecutive index $i<x,x+1<j$ such that $\sigma(x)>k>\sigma(x+1)$.
Then we have $(31-2)(k,\sigma)<(31-2)(k,\sigma)$, contradictory to $(31-2)(k,\sigma)=(31-2)(k,\sigma)$.
We complete the proof.
\end{proof}

\begin{example}
We illustrate $\Phi_1^{-1}$ on $\tau=8~3~6~1~5~7~2~4$.
\begin{align*}
(\infty,1)&\longrightarrow(\infty,1)(\infty,2)\\
\longrightarrow(\infty,1)(\infty,3,2)&\longrightarrow (4)(\infty,1)(\infty,3,2)\\
\longrightarrow(4)(\infty,1)(\infty,3,2)(5)&\longrightarrow(4)(\infty,1)(\infty,6,3,2)(5)\\
\longrightarrow(4)(7,1)(\infty,6,3,2)(5)&\longrightarrow(4)(7,1)(8,6,3,2)(5).
\end{align*}
Then we have $\Phi_1^{-1}(\tau)=\sigma=4~7~1~8~6~3~2~5$.
\end{example}

\begin{lemma}\label{lem:exc-drop2}
		Let $\Phi_1(\sigma)=\tau$ for $\sigma\in \Sym_n$. Then
	\begin{align}\label{exc-des}
 		( \cpeak,\cval, \cdrise, \cdfall+\fix)\,\tau=( \peak, \valley, \ddes, \dasc)\,\sigma,
 		\end{align}
		\begin{align}\label{mapping: phi1}
	(\des, \des_2)	\sigma=(\exc, \ear) \Phi_1(\sigma).
		\end{align}
	\end{lemma}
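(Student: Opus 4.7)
The plan is to establish the two displayed equalities in sequence. For Eq.~\eqref{exc-des}, I would perform a case analysis on each value $v \in [n]$ according to its membership in the four sets $F \cap F'$, $F \cap G'$, $G \cap F'$, $G \cap G'$ (where $F, F', G, G'$ are the sets of descent bottoms, descent tops, non-descent bottoms, and non-descent tops of $\sigma$). Writing $v = \sigma(j)$, the conventions $\sigma(0)=0$, $\sigma(n+1)=n+1$ make $v \in F'$ equivalent to $\sigma(j+1)<v$ and $v \in F$ equivalent to $\sigma(j-1)>v$, so the four intersections precisely match the four linear types $\ddes$, $\valley$, $\peak$, and ($\dasc$ or $\fix$) of $v$ in $\sigma$. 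On the $\tau$ side, the construction of $\Phi_1$ ensures that $F$ is the set of excedance positions of $\tau$ and $F'$ is the set of excedance values, so those same four intersections translate directly into the four combinations of $\tau(v)\gtrless v$ with $\tau^{-1}(v)\gtrless v$, namely $\cdrise$, $\cval$, $\cpeak$, and ($\cdfall$ or $\fix$). Summing over $v$ gives Eq.~\eqref{exc-des}.

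For Eq.~\eqref{mapping: phi1}, the first coordinate is immediate: $\des(\sigma)=|F|$ equals the number of excedance positions of $\tau$, so $\des(\sigma)=\exc(\tau)$. For the second coordinate $\des_2(\sigma)=\ear(\tau)$, I would chain three reductions. By Lemma~\ref{lem:pexear}(c), $\ear(\tau)$ counts cycle peaks $v$ of $\tau$ with $\lnest(v,\tau)=0$. By Lemma~\ref{corteel2} together with the definition of $\nest$ (which coincides with $\lnest$ at $v$ whenever $v>\tau(v)$), we get $\lnest(v,\tau)=(31\text{-}2)(v,\sigma)$ for every cpeak $v$ of $\tau$. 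Combined with the $\cpeak$-$\peak$ correspondence from Eq.~\eqref{exc-des}, this yields $\ear(\tau)=\#\{v:v\text{ is a peak of }\sigma\text{ and }(31\text{-}2)(v,\sigma)=0\}$. It then suffices to check that for a peak $v=\sigma(i)$, the vanishing $(31\text{-}2)(v,\sigma)=0$ is equivalent to $v$ being a left-to-right maximum of $\sigma$, which in turn is the same as the index $i$ being a descent of type 2. The forward direction is immediate since an LRM forces $\sigma(j-1)<v$ for all $j\le\sigma^{-1}(v)$, killing every potential pattern; for the converse, if $v$ is a peak but not LRM, let $j_1<\sigma^{-1}(v)$ be the largest index with $\sigma(j_1)>v$: the peak condition at position $\sigma^{-1}(v)$ rules out $j_1=\sigma^{-1}(v)-1$, so $i=j_1+1$ witnesses a strict $(31\text{-}2)$ pattern, a contradiction.

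The main technical obstacle is the bookkeeping behind Eq.~\eqref{exc-des}: one must reconcile the position/value swap hidden in $\Phi_1$, carefully handle the boundary behaviour at positions $1$ and $n$ via the conventions $\sigma(0)=0$ and $\sigma(n+1)=n+1$ (so that, for instance, $v=\sigma(1)$ behaves correctly as a non-descent bottom), and separate genuine $\cdfall$'s from fixed points (the fixed points of $\tau$ arise precisely as those $v\in G\cap G'$ for which the construction of $\binom{g}{g'}$ selects $v$ itself as the entry below $v$). Once this case analysis is in place, Eq.~\eqref{mapping: phi1} follows quickly from Lemmas~\ref{lem:pexear} and~\ref{corteel2} together with the short peak/LRM equivalence above.
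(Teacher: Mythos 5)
Your proposal is correct and follows essentially the same route as the paper's proof: a case analysis matching the four intersections of descent-top/descent-bottom sets with the four linear types of $\sigma$ and the four cycle types of $\tau$ for Eq.~\eqref{exc-des}, then Lemma~\ref{lem:pexear} and Lemma~\ref{corteel2} for Eq.~\eqref{mapping: phi1}. Your explicit verification that a peak $v$ satisfies $(31\text{-}2)(v,\sigma)=0$ if and only if $v$ is a left-to-right maximum is in fact spelled out more carefully than in the paper, which only states the forward implication.
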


\begin{proof}

We just prove \eqref{exc-des}.
  If $j$ is a peak of $\sigma$, then $j$ is a descent top and nondescent bottom, so $j$ is in $f'$ and $g$. Since in $f \choose f'$, $\tau(k)=j>k$, and in $g \choose g'$, $\tau(j)=l <j$, then $j$ is a cycle peak in $\tau$.
If $j$ is a valley  in $\sigma$, then  $j$ is a descent bottom of $\sigma$ and nondescent top of $\sigma$,
so $j$ is in $f$ and $g'$, so $j$ is a cycle valley of $\tau$. 
 Similarly we have if $j$ is a double descent of $\sigma$, then $j$ is in $f$ and $f'$, so $j$ is a cycle double rise in $\tau$.
  And if $j$ is a double ascent of $\sigma$, then $j$ is in $g$ and $g'$, so $j$ is a cycle double fall or a fixed point in $\tau$.
Let $\Phi_1(\sigma)=\tau$. 
It is not difficult to check that $\des=\peak+\ddscent=\valley+\ddscent$ and by Lemma \ref{lem:pexear}, $\exc=\cval+\cdrise$. 
So by Lemma \ref{corteel2} we have $\exc(\tau)=\des(\sigma)$.
If $i$ is a left-to-right maximum of $\sigma$, then $(31-2)(i,\sigma)=0$.
By definition of descent of type 2, $i\in \Des_2(\sigma)$ if and only if $i$ is a descent and left to right maximum,
then $i$ is a peak and $(31-2)(i,\sigma)=0$.
By Lemma \ref{corteel2} we have $i\in \Cpeak(\tau)$ and $\lnest(i,\tau)=0$, that is $i$ is an eareccpeak.
If $i\notin \Des_2(\sigma)$, then either $i$ is not a peak or $(31-2)(i,\sigma)\neq 0$.
Then we have $i\notin \Cpeak(\tau)$ or $\lnest(i,\tau)\neq 0$, that is $i$ is not an eareccpeak.
So we have $\ear(\tau)=\des_2(\sigma)$.
\end{proof}

\subsubsection{The bijection $\Phi_2$}
We first recall the bijection  $\Phi_{SZ}$ of Shin and Zeng~\cite{SZ10}, which is a 
variation of the bijection $\Phi$ in \cite{CSZ97}.
    Given a permutation $\sigma=\sigma(1)\cdots \sigma(n)$, we proceed as follows:
    \begin{itemize}
    \item Determine the sets of descent tops $F$  and descent bottoms $F'$,
 and their complements $G$ and $G'$, respectively;
    \item let $f$ and  $g$ be  the  increasing permutations of $F$ and $G$, respectively;
\item construct the biword $f \choose f'$: for each $j$ in the first row $f$ starting from the \textbf{smallest} (left), the entry in $f'$ that below $j$ is the $((31-2)(j,\sigma)+1)$th
    largest entry of $f'$ that is smaller than $j$ and not yet chosen.
    \item construct the biword $g \choose g'$: for each $j$ in the first row  $g$ starting from the \textbf{largest} (right), the entry  below $j$ in $g'$ 
    is the $((31-2)(j,\sigma)+1)$th smallest entry of $g'$ that is not smaller than $j$ and not yet chosen.
    \item
      Rearranging the columns so that the top row is in increasing order,
    we obtain the permutation $\tau=\Phi_{SZ}(\sigma)$ as the bottom row of the rearranged biword.
\end{itemize}


	\begin{example}\label{ex2}
For  $\sigma=4~7~1~8~6~3~2~5$ with
$$
\begin{tabular}{c|c|c|c|c|c|c|c|c}
$\sigma=$&4&7&1&8&6&3&2&5\\
\hline
$(31-2)(i,\sigma)$&0&0&0&0&1&1&1&2\\
\hline
$(2-31)(i,\sigma)$&2&1&0&0&0&0&0&0\\
\end{tabular}
\quad
{\Phi_{SZ} \atop \longrightarrow} \quad
\begin{tabular}{c|c|c|c|c|c|c|c|c}
$\tau=$&5&7&1&4&8&2&6&3\\
\hline
$\cross(i,\tau)$&2&0&0&0&0&1&1&1\\
\hline
$\nest(i,\tau)$&0&1&0&2&0&0&0&0\\
\end{tabular}.
$$
We have
$$
	{f\choose f'}
	= \left(
	{3_1 \atop 1}~
	{6_1 \atop 2}~
	{7 \atop 6}~
	{8 \atop 3}
	\right),
	\quad
	{g\choose g'}
	= \left(
	{1 \atop 5}~
	{2_1 \atop 7}~
	{4 \atop 4}~
	{5_2 \atop 8}
	\right).
	$$
	Hence
	$$
	\tau
	= \left( {f \atop f'}~{g \atop g'} \right)
	= \left(
	{1 \atop 5}~
	{2 \atop 7}~
	{3 \atop 1}~
	{4 \atop 4}~
	{5 \atop 8}~
	{6 \atop 2}~
	{7 \atop 6}~
	{8 \atop 3}
	\right).
	$$
\end{example}		


Similar to Lemma \ref{corteel2} and Lemma \ref{bijection}  we have
the following result in \cite{SZ10}.

 \begin{lemma}\label{corteel} The mapping $\Phi_{SZ}: \Sym_n\to \Sym_n$ is a bijection.
 	For $\sigma\in \Sym_n$, if $\Phi_{SZ}(\sigma)=\tau$, then
\begin{subequations}
for $i\in [n],$
       \begin{align}
	   (31-2)(i,\sigma)=\cross(i,\tau),\\
	   (2-31)(i,\sigma)=\nest(i,\tau).
	   \end{align}
	   \end{subequations}
		\end{lemma}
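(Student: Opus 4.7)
The plan is to mirror the proofs of Lemmas~\ref{corteel2} and~\ref{bijection}, since $\Phi_{SZ}$ is a structural dual of $\Phi_1$: the top row $f$ now consists of descent tops of $\sigma$ (so the ${f \choose f'}$ block records the drops $\tau(i)<i$ of $\tau$ rather than its excedances), and the two sub-biwords are scanned in the reversed order; the underlying ranking-by-$(31\text{-}2)$ mechanism is otherwise identical. Well-definedness is settled by the same availability count as in Lemma~\ref{corteel2}: when a descent top $j$ is processed in $f$, an injection from the $(31\text{-}2)(j,\sigma)$ eligible-but-skipped configurations into earlier-consumed descent bottoms smaller than $j$ shows that the required $((31\text{-}2)(j,\sigma)+1)$st largest element of $f'$ below $j$ exists, and an analogous count works for ${g \choose g'}$.

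For bijectivity I would write down $\Phi_{SZ}^{-1}$ explicitly, in the spirit of Lemma~\ref{bijection}: read off from $\tau$ the sets of drop positions and drop values (which play the role of $f$ and $f'$ respectively), classify each $i\in[n]$ as opener, closer, insider or outsider according to whether it lies in $fg'$, $f'g$, $ff$ or $gg'$, and insert $i$ in increasing order into the $(\nest(i,\tau)+1)$st still-incomplete descent block counted from the right (to mirror the reversed $g$-scan of $\Phi_{SZ}$). An induction on the leftmost position at which any candidate $\bar\sigma$ could disagree with $\sigma$, parallel to the proof of Lemma~\ref{bijection}, then yields $\Phi_{SZ}^{-1}\circ\Phi_{SZ}=\mathrm{id}$.

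The statistic identities would then be proved by a case analysis on the cycle type of $i$ in $\tau$. For $i\in F$ (Cpeak or Cdfall), the $(31\text{-}2)(i,\sigma)$ candidates skipped when $i$ is processed in $f$ are in bijection with the later-processed descent tops $k>i$ whose matched values $b=\tau(k)$ satisfy $\tau(i)<b<i<k$, i.e.\ with the lower crossings at $i$ counted by $\lcross(i,\tau)$; for $i\in F'\setminus F$ (Cval) the dual argument in the $g$-block yields $\ucross(i,\tau)$; and for $i$ a Cdrise the value $i$ itself lies in $g'$ and is always among the skipped candidates (being assigned later to its own predecessor $\tau^{-1}(i)$ in a degenerate, non-crossing configuration), which accounts for the $+1$ offset in~\eqref{connect1}. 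The identity $(2\text{-}31)(i,\sigma)=\nest(i,\tau)$ is established dually, by counting the candidates consumed \emph{after} $i$ in the scanning order and identifying the degenerate self-consumption responsible for the $\lev$-case in~\eqref{connect3}. The main obstacle is exactly this bookkeeping across all five cycle-type cases, together with verifying that the degenerate self-consumption accounts for the $+1$ and $\lev$ offsets in~\eqref{connect1}--\eqref{connect3}; beyond this care, no ideas beyond those of Lemmas~\ref{corteel2} and~\ref{bijection} are required.
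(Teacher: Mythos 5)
Your proposal matches the paper's intent exactly: the paper gives no proof of this lemma, merely citing \cite{SZ10} together with the analogy to Lemmas~\ref{corteel2} and~\ref{bijection}, and your sketch carries out precisely that analogy (reversed scanning order, ranking by $(31\text{-}2)$, an inverse via block insertion, and the degenerate self-consumption of $i\in g'$ explaining the $+1$ and $\lev$ offsets in \eqref{connect1} and \eqref{connect3}). The case analysis you outline is consistent with the cycle-type correspondence used in the proof of Lemma~\ref{kappa}, so this is the same approach, spelled out in more detail than the paper provides.
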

	
	By Lemma \ref{lem:pexear}, if $i\in \Pex(\sigma)$, then $i\in \Cval(\sigma)$ and $\ucross(i,\sigma)=0.$
    And by definition of "$\pdrop$", we see that
    if  $i\in \Pdrop(\sigma)$, then $i\in \Cpeak(\sigma)$ and
    $\lcross(i,\sigma)=0.$ 
Recall that   Fix (resp. Valley, Peak, Ddes, Dasc, Fmax) is the set valued 
statistic, namely ${\Fix}\,\sigma$ denotes the set of fixed points of $\sigma$.

	\begin{lemma}\label{kappa}
		We have
			$(\des, \des_2, \fmax)\sigma=(\drop, \pdrop, \fix) \Phi_{SZ}(\sigma)$.
	\end{lemma}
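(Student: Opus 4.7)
The plan is to follow the template of Lemma~\ref{lem:exc-drop2}, adapted to $\Phi_{SZ}$, and then to invoke Lemma~\ref{corteel} to transfer $(31-2)$-patterns on $\sigma$ into cycle-cross statistics on $\tau:=\Phi_{SZ}(\sigma)$. I would first establish the $\Phi_{SZ}$-analogue of \eqref{exc-des}:
$$(\Cpeak,\Cval,\Cdfall,\Cdrise\cup\Fix)\tau = (\Peak,\Valley,\Ddes,\Dasc)\sigma.$$
This is read directly off the biword construction. The cell of the partition $(F,G)\times(F',G')$ to which a value $j$ belongs determines, on one hand, whether $\tau(j)<j$ or $\tau(j)\geq j$ (via whether $j$ sits on top of a column of ${f\choose f'}$ or of ${g\choose g'}$) and, on the other, whether $\tau^{-1}(j)>j$ or $\tau^{-1}(j)\leq j$ (via in which biword $j$ appears on the bottom row). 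The four resulting cases match the four cycle-classification cells exactly, and summing yields $\des(\sigma)=\peak+\ddes=\cpeak+\cdfall=\drop(\tau)$.

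For the pdrop/$\des_2$ coordinate, I would use the characterization that $i\in\Pdrop(\tau)$ iff $i\in\Cpeak(\tau)$ and $\lcross(i,\tau)=0$. By the previous step combined with Lemma~\ref{corteel} and \eqref{connect1} (noting that $\cross=\lcross$ on $\Cpeak$), this converts to $i\in\Peak(\sigma)$ and $(31-2)(i,\sigma)=0$. The crucial elementary observation is that for a peak value $v$, the vanishing $(31-2)(v,\sigma)=0$ is equivalent to $v$ being a left-to-right maximum of $\sigma$: the forward direction is trivial, and for the converse, if $j^*$ denotes the largest $j'<\sigma^{-1}(v)$ with $\sigma(j')>v$, the peak condition forces $j^*\leq\sigma^{-1}(v)-2$, so $j=j^*+1$ witnesses a $(31-2)$-occurrence at $v$. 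Since $\des_2(\sigma)$ enumerates descent positions whose value is an LRM, which are precisely the peak values that are LRM, we conclude $\des_2(\sigma)=\pdrop(\tau)$.

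For the $\fmax=\fix$ coordinate, I would analyze fixed points of $\tau$ directly from the ${g\choose g'}$ construction. Any fixed point $\tau(v)=v$ must have $v\in g\cap g'=\Dasc(\sigma)$; since $g$ is processed in decreasing order and each $u>v$ in $g$ picks a value $\geq u>v$, the slot $v\in g'$ is still available when $v$'s turn arrives, and the rule then selects $v$ itself iff $(31-2)(v,\sigma)=0$. The same maximality argument as above shows that for a dasc value, $(31-2)=0$ is equivalent to being an LRM; hence $\Fix(\tau)$ equals the set of dasc values of $\sigma$ that are LRMs, namely the foremaxima, giving $\fix(\tau)=\fmax(\sigma)$. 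The main difficulty is the bookkeeping of the first step — the two biwords are processed in opposite orders, and one must carefully track the value-versus-position distinction — after which all three coordinates reduce to the two LRM-equivalences above.
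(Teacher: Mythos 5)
Your proof is correct and follows essentially the same route as the paper's: establish the correspondence $(\Cval,\Cpeak,\Cdfall,\Cdrise+\Fix)\,\tau=(\Valley,\Peak,\Ddes,\Dasc)\,\sigma$ from the biword construction, transfer $(31\text{-}2)(i,\sigma)$ to $\lcross(i,\tau)$ via Lemma~\ref{corteel}, and characterize $\des_2$ (resp.\ $\fmax$) as peaks (resp.\ double ascents) with vanishing $(31\text{-}2)$. You in fact supply details the paper leaves implicit, notably the converse direction of the equivalence ``$(31\text{-}2)(v,\sigma)=0$ iff $v$ is a left-to-right maximum'' for peak and double-ascent values, and the ``iff'' in the fixed-point analysis.
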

\begin{proof}
Let $\Phi_{SZ}(\sigma)=\tau$.
Then similar to the proof of Lemma \ref{lem:exc-drop2},
we prove that
\begin{align}\label{des-drop}
(\Cval, \Cpeak, \Cdfall, \Cdrise+\Fix, \Fix)\,\tau=(\Valley, \Peak,  \Ddes, \Dasc, \Fmax)\,\sigma.
\end{align}
Note that if $i$ is a foremaximum, then $i$ is a double ascent and $(31-2)(i,\sigma)=0$.
Then by the bijection $\Phi_{SZ}$,  $i$ is in $g$ and $g'$.
Since in $g$ the entry is sorted from the largest, for all the entries $j>i$ in $g$, we have $\tau(j)\geq j>i$.
We obtain that when $i$ is sorted, $i$ is not chosen and the smallest entry that are not chosen is $i$.
By $(31-2)(i,\sigma)=0$, we get $i$ is a fixed point.
It is not difficult to check that $\des=\peak+\ddscent$ and $\drop=\cpeak+\cdfall$.
By \eqref{des-drop} we have $\drop(\tau)=\des(\sigma)$.
If $i$ is a descent of type 2, then $i$ is a peak and $(31-2)(i,\sigma)=0$.
By \eqref{des-drop} and Lemma \ref{corteel} we have $i\in \Cpeak(\tau)$ and $\lcross(i,\tau)=0$, that is $i$ is a pure drop.
If $i$ is not a descent of type 2, then we have $i\notin \Cpeak(\tau)$ or $\lcross(i,\tau)\neq 0$, that is $i$ is not a pure drop.
So we have $\pdrop(\tau)=\des_2(\sigma)$.
\end{proof}

Let $\Phi_2=\zeta\circ \Phi_{SZ}$, where 
$\zeta$ is  the \emph{reversal and complementation} operation
	in \eqref{zeta}.
Combining Lemma \ref{kappa} and Lemma \ref{lem:exc-drop1} we obtain Eq. \eqref{des2-pex}. An illustration for the permutation $\tau$ in Example \ref{ex2} is given in 
Figure \ref{fig.pictorial}. 
\section{Interpretations using linear statistics}

The polynomials $Q_n(\bsfa,\bsfb,\bsfc,\bsfd,\bsfe)$  being  defined using cyclic statistics in \eqref{def.Qn.firstmaster},
by the bijection $\Phi_{SZ}$, Lemma \ref{corteel} and Eq. \eqref{des-drop}, 
we derive  the following  interpretation  using linear statistics.
\begin{theorem}(First linear version of $Q_n$)
\begin{eqnarray}\label{eq:linearQ1} 
& & \hspace*{-10mm}
Q_n(\bsfa,\bsfb,\bsfc,\bsfd,\bsfe)
\;=\;
\nonumber \\[4mm]
& &
\sum_{\sigma \in \Sym_n}
\;\:
\prod\limits_{i \in {\rm \Valley}}  \! \sfa_{(31-2)(i,\sigma),\,(2-31)(i,\sigma)}
\prod\limits_{i \in {\rm \Peak}} \!\!  \sfb_{(31-2)(i,\sigma),\,(2-31)(i,\sigma)}
\:\times
\qquad\qquad
\nonumber \\[1mm]
& & \qquad\;
\prod\limits_{i \in {\rm \Ddes}} \!\!  \sfc_{(31-2)(i,\sigma),\,(2-31)(i,\sigma)}
\;
\prod\limits_{i \in {\rm \Dasc\setminus\Fmax}} \!\!  \sfd_{(31-2)(i,\sigma)-1,\,(2-31)(i,\sigma)}
\, \prod\limits_{i \in {\rm \Fmax}} \sfe_{(2-31)(i,\sigma)}
\;.
\quad
\end{eqnarray}

\end{theorem}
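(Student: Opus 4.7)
\medskip

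\noindent\textbf{Proof proposal.}
The plan is to transport the cyclic definition \eqref{def.Qn.firstmaster} to a linear one via the bijection $\Phi_{SZ}: \Sym_n \to \Sym_n$ of Shin--Zeng. I would start by writing
\[
Q_n(\bsfa,\bsfb,\bsfc,\bsfd,\bsfe)
\;=\;
\sum_{\sigma \in \Sym_n}
\mathcal{W}(\Phi_{SZ}(\sigma)),
\]
where $\mathcal{W}(\tau)$ denotes the weight in \eqref{def.Qn.firstmaster}, and then translate $\mathcal{W}(\tau)$ into a weight on $\sigma$ using the two known compatibility statements: Eq.~\eqref{des-drop}, which identifies the cyclic classes of $\tau=\Phi_{SZ}(\sigma)$ with the linear classes of $\sigma$, and Lemma~\ref{corteel}, which identifies the refined cyclic statistics $\cross(i,\tau)$ and $\nest(i,\tau)$ with the linear pattern statistics $(31\text{-}2)(i,\sigma)$ and $(2\text{-}31)(i,\sigma)$.

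The heart of the argument is then a case-by-case rewriting of each factor in $\mathcal{W}(\tau)$, taking into account how $\ucross,\lcross,\unest,\lnest,\lev$ are related to $\cross$ and $\nest$ in \eqref{connect2}--\eqref{connect3}. Concretely, for $i\in \Cval(\tau)$ (respectively $\Cpeak(\tau)$, $\Cdfall(\tau)$), one has $\ucross(i,\tau)=\cross(i,\tau)$ and $\unest(i,\tau)=\nest(i,\tau)$ (respectively with $\lcross,\lnest$), so the $\sfa$, $\sfb$, $\sfc$ factors become $\sfa_{(31\text{-}2)(i,\sigma),\,(2\text{-}31)(i,\sigma)}$ etc., and by \eqref{des-drop} these products range over $\Valley(\sigma)$, $\Peak(\sigma)$, $\Ddes(\sigma)$ respectively. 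For fixed points of $\tau$, $\lev(i,\tau)=\nest(i,\tau)=(2\text{-}31)(i,\sigma)$, and by the last clause of \eqref{des-drop} they are exactly the indices in $\Fmax(\sigma)$, yielding the $\sfe$ factor.

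The only delicate case is the one that forces the index shift on $\sfd$. For $i\in \Cdrise(\tau)$, the definition \eqref{connect1} gives $\cross(i,\tau)=\ucross(i,\tau)+1$ while $\unest(i,\tau)=\nest(i,\tau)$; this translates to $\sfd_{(31\text{-}2)(i,\sigma)-1,\,(2\text{-}31)(i,\sigma)}$. Moreover, by \eqref{des-drop} one has $\Cdrise(\tau)\cup\Fix(\tau)=\Dasc(\sigma)$ together with $\Fix(\tau)=\Fmax(\sigma)$, so after removing the fixed-point contributions the $\sfd$-product is indexed by $\Dasc(\sigma)\setminus\Fmax(\sigma)$. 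I expect this partition $\Dasc=(\Dasc\setminus\Fmax)\sqcup \Fmax$ together with the $-1$ shift to be the main bookkeeping obstacle, and once it is handled correctly the right-hand side of \eqref{eq:linearQ1} drops out. The bijectivity of $\Phi_{SZ}$, recalled in Lemma~\ref{corteel}, then allows us to replace the summation index $\Phi_{SZ}(\sigma)$ by $\sigma$, completing the proof.
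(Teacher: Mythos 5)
Your proposal is correct and follows exactly the route the paper intends: the paper derives this theorem precisely "by the bijection $\Phi_{SZ}$, Lemma \ref{corteel} and Eq. \eqref{des-drop}," which is your argument, and your case-by-case translation of the five factors (including the $-1$ shift on $\sfd$ coming from $\cross(i,\tau)=\ucross(i,\tau)+1$ on cycle double rises, and the split $\Dasc=(\Dasc\setminus\Fmax)\sqcup\Fmax$ matching $\Cdrise(\tau)\sqcup\Fix(\tau)$) is the correct bookkeeping. In fact your write-up is more detailed than the paper's, which leaves this verification implicit.
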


 For $\sigma\in\Sym_n$,  a value $\sigma(i)$ is an 
\emph{antirecord double ascent} 
($\textrm{Arda}$)
if it is an antirecord and at the same time a double ascent.
Applying the bijection $\Phi_1$ we obtain another interpretation of the 
polynomials $Q_n$.
\begin{theorem}(Second linear version of $Q_n$)
\begin{eqnarray}
& & \hspace*{-10mm}
Q_n(\bsfa,\bsfb,\bsfc,\bsfd,\bsfe)
\;=\;
\nonumber \\[4mm]
& &
\sum_{\sigma \in \Sym_n}
\;\:
\prod\limits_{i \in {\rm \Valley}}  \! \sfa_{(2-31)(i,\sigma),\,(31-2)(i,\sigma)}
\prod\limits_{i \in {\rm \Peak}} \!\!  \sfb_{(2-31)(i,\sigma),\,(31-2)(i,\sigma)}
\:\times
\qquad\qquad
\nonumber \\[1mm]
& & \qquad\;
\prod\limits_{i \in {\rm \Dasc \setminus \Arda}} \!\!  \sfc_{(2-31)(i,\sigma)-1,\,(31-2)(i,\sigma)}
\;
\prod\limits_{i \in {\rm \Ddes}} \!\!  \sfd_{(2-31)(i,\sigma),\,(31-2)(i,\sigma)}
\, \prod\limits_{i \in \Arda} \sfe_{(31-2)(i,\sigma)}
\;.
\quad
\end{eqnarray}

\end{theorem}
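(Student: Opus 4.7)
The plan is to follow the same template as the proof of the first linear version, but replacing $\Phi_{SZ}$ and Lemma~\ref{corteel} by $\Phi_1$ and Lemma~\ref{corteel2}. Fix $\sigma\in\Sym_n$ and set $\tau=\Phi_1(\sigma)$. Applying Lemma~\ref{lem:exc-drop2}, the cycle classes of $\tau$ are identified with the descent classes of $\sigma$ via $\Cval(\tau)=\Valley(\sigma)$, $\Cpeak(\tau)=\Peak(\sigma)$, $\Cdrise(\tau)=\Ddes(\sigma)$, and $\Cdfall(\tau)\cup\Fix(\tau)=\Dasc(\sigma)$. The definitions \eqref{connect1}--\eqref{connect3} then rewrite $\ucross(i,\tau),\unest(i,\tau),\lcross(i,\tau),\lnest(i,\tau)$ and $\lev(i,\tau)$ in terms of $\icross(i,\tau)$ and $\nest(i,\tau)$, with a $-1$ shift in the first coordinate occurring exactly when $i\in\Cdfall(\tau)$; these are in turn converted by Lemma~\ref{corteel2} into $(2-31)(i,\sigma)$ and $(31-2)(i,\sigma)$.

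After these substitutions, the cyclic formula \eqref{def.Qn.firstmaster} becomes the claimed linear expression provided we also identify which elements of $\Dasc(\sigma)$ correspond to $\Fix(\tau)$ (picking up the weight $\sfe$) versus to $\Cdfall(\tau)$ (picking up $\sfc$). The theorem asserts the split $\Fix(\tau)=\Arda(\sigma)$ and $\Cdfall(\tau)=\Dasc(\sigma)\setminus\Arda(\sigma)$; verifying this is the main obstacle.

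To prove $\Fix(\tau)=\Arda(\sigma)$, I would unpack the construction of $\Phi_1$. For a double ascent $v$ of $\sigma$ we have $v\in G\cap G'$, and the rule defining the biword $g\choose g'$ shows that $\tau(v)=v$ if and only if, when $v$ is processed in increasing order inside $g$, the $((31-2)(v,\sigma)+1)$-st smallest element of $G'$ that is $\leq v$ and still available equals $v$ itself. Since every previously assigned value is strictly less than $v$, this reduces to the numerical identity
\[
|F\cap[1,v-1]|-|F'\cap[1,v-1]|=(31-2)(v,\sigma),
\]
which, using $F=\Valley(\sigma)\sqcup\Ddes(\sigma)$ and $F'=\Peak(\sigma)\sqcup\Ddes(\sigma)$, becomes
\[
|\Valley(\sigma)\cap[1,v-1]|-|\Peak(\sigma)\cap[1,v-1]|=(31-2)(v,\sigma).
\]

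The remaining step, and the technical core of the proof, is to show that this last equality holds precisely when $v$ is an antirecord. I would decompose $\{j\in[1,n]:\sigma(j)<v\}$ into maximal contiguous blocks of positions. With the convention $\sigma(0)=0$ and $\sigma(n+1)=n+1$, a short calculation (applying the elementary identity $\#\text{peaks}-\#\text{valleys}=(\epsilon_L-\epsilon_R)/2$ to each block) shows the left-hand side equals the total number of such blocks minus the indicator that some block contains position $1$, while $(31-2)(v,\sigma)$ equals the number of blocks contained in $[1,\sigma^{-1}(v)-1]$ minus the same indicator. These two counts coincide iff no block lies to the right of $\sigma^{-1}(v)$, i.e., iff $\sigma(k)>v$ for every $k>\sigma^{-1}(v)$, which is the antirecord condition. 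Combined with $v\in\Dasc(\sigma)$, this yields $\tau(v)=v\iff v\in\Arda(\sigma)$ and completes the proof.
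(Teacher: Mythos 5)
Your proposal is correct, and its skeleton coincides with the paper's: apply $\Phi_1$, transport the cycle classification of $\tau=\Phi_1(\sigma)$ to the descent classification of $\sigma$ via Lemma~\ref{lem:exc-drop2}, convert the refined crossing/nesting statistics through Lemma~\ref{corteel2} (noting the $-1$ shift on $\icross$ for cycle double falls), and reduce everything to the single new claim $\Fix\,\Phi_1(\sigma)=\Arda\,\sigma$. Where you diverge is in how that claim is verified. The paper's argument is essentially a one-liner on top of Lemma~\ref{corteel2}: among double ascents $i$ (which land in $\Cdfall(\tau)\cup\Fix(\tau)$), one has $\icross(i,\tau)\geq 1$ for a cycle double fall and $\icross(i,\tau)=0$ for a fixed point, so $i\in\Fix(\tau)$ iff $(2-31)(i,\sigma)=0$, which for a double ascent is exactly the antirecord condition. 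You instead re-derive the fixed-point criterion directly from the selection rule for the biword $\binom{g}{g'}$, arriving at the identity $|\Valley(\sigma)\cap[1,v-1]|-|\Peak(\sigma)\cap[1,v-1]|=(31\text{-}2)(v,\sigma)$ and then evaluating both sides by decomposing $\{j:\sigma(j)<v\}$ into maximal blocks. I checked this block computation (including the boundary conventions $\sigma(0)=0$, $\sigma(n+1)=n+1$ and the fact that the block containing position $1$, when it exists, necessarily lies in $[1,\sigma^{-1}(v)-1]$), and it is sound; it amounts to an independent, self-contained proof of the special case of Lemma~\ref{corteel2} that is actually needed here. Your route is longer but more elementary and does not lean on the transfer lemma for this step; the paper's is shorter but terser (and arguably clearer once one notices that $(2-31)(i,\sigma)=0$ characterizes antirecords among double ascents). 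Either way the theorem is established.
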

\begin{proof} In view of Lemma~\ref{corteel} we just need to show that 
$$\Arda\; \sigma= \Fix \Phi_1(\sigma).$$
	  If $i$ is an antirecord, then $(2-31)(i,\sigma)=0$.
	 Thus if $i$ is a $\Arda$, then $i$ is a double ascent and $(2-31)(i,\sigma)=0$.
	Then by the bijection $\Phi_1$,  $i$ is in $g$ and $g'$.
	Since in $g$ the entry is sorted from the smallest, for all the entries $j<i$ in $g$, we have $\tau(j)\leq j<i$.
	We obtain that when $i$ is sorted, $i$ is not chosen and the  largest entry that are not chosen is $i$.
	By $(2-31)(i,\sigma)=0$, we get $i$ is a fixed point.
    Suppose $i$ is not a ARL, if $i$ is not a double ascent,
    then $i$ is either not in $g$ or in $g'$.
    So $i$ is not a fixed point. 
    And if $i$ is a double ascent,
    since $(2-31)(i,\sigma)\neq 0$ and $i$ is the largest entry that are not chosen, then $i$ is not a fixed point.
\end{proof}

\begin{rem}
Using the reversal transformation  $\sigma\mapsto \sigma^r$, we obtain  a dual version of the above theorems for the boundary
 condition $\infty-0$, i.e., $\sigma(0)=n+1$ and $\sigma(n+1)=0$.
\end{rem}

As an application of  Theorem~\ref{eq:linearQ1} we give  a linear version for   
$A_n(t, \lambda, y, w)$ and its  derangement counterpart $D_n(t, \lambda, y)$.
This enables us to give a group action proof of  \eqref{Equ1} in Theorem~\ref{gamma-thm}. 

For $\sigma\in \Sym_n$,  recall that $\asc\,\sigma$ is the number of ascents of $\sigma$ (cf. \eqref{eq:comp}) and  define the statistics \emph{pure valley} ($\pval$) and 
\emph{pure peak} ($\ppk$) by
\begin{align}
\pval\,\sigma =&|\{i\in[n]\colon i \in\Valley\,\sigma\,\, \text{and}\,\,\, \cab(i , \sigma)=0\}|,\label{def:pval}\\
\ppk\,\sigma=&|\{i\in[n]\colon i  \in\Peak\,\sigma\,\, \text{and}\,\,\, \bca(i , \sigma)=0\}|.\label{def:ppk}
\end{align}

\begin{lemma}
We have 
\begin{equation}\label{linear:A}
A_n(t, \lambda, y, w)=
\sum_{\sigma\in\Sym_n}t^{(\asc-\fmax)\,\sigma}\lambda^{\pval\,\sigma}y^{\ppk\,\sigma}w^{\fmax\,\sigma}.
\end{equation}
\end{lemma}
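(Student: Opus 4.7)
The plan is to apply, term by term, the same specialization~\eqref{case1} used in the proof of Theorem~\ref{mainresult:1} to extract $A_n(t,\lambda,y,w)$ from the first master $J$-fraction, but now to the \emph{linear}-statistics interpretation of $Q_n$ recorded in equation~\eqref{eq:linearQ1}. Because \eqref{eq:linearQ1} writes $Q_n(\bsfa,\bsfb,\bsfc,\bsfd,\bsfe)$ as a sum over permutations weighted by the refined patterns $(31\text{-}2)$ and $(2\text{-}31)$ at valleys, peaks, double descents, double ascents (off foremaxima), and foremaxima, the substitution produces a linear-statistics formula for $A_n$ at essentially no extra cost.

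Carrying out the substitution: for $i\in\Valley\sigma$ the factor $\sfa_{(31\text{-}2)(i,\sigma),(2\text{-}31)(i,\sigma)}$ is $\lambda t$ when $(31\text{-}2)(i,\sigma)=0$ and $t$ otherwise, so, by the defining identity~\eqref{def:pval} of $\pval$, valleys contribute $t^{\valley\sigma}\lambda^{\pval\sigma}$; for $i\in\Peak\sigma$ the factor $\sfb_{(31\text{-}2)(i,\sigma),(2\text{-}31)(i,\sigma)}$ is $y$ when $(2\text{-}31)(i,\sigma)=0$ and $1$ otherwise, so, by \eqref{def:ppk}, peaks contribute $y^{\ppk\sigma}$; for $i\in\Ddes\sigma$ the factor $\sfc$ is $1$; each $i\in\Dasc\sigma\setminus\Fmax\sigma$ contributes $\sfd=t$; and each $i\in\Fmax\sigma$ contributes $\sfe=w$. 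Multiplying, the weight of $\sigma$ becomes
\[
t^{\valley\sigma+\dasc\sigma-\fmax\sigma}\,\lambda^{\pval\sigma}\,y^{\ppk\sigma}\,w^{\fmax\sigma}.
\]

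What remains is purely arithmetic: rewrite the exponent of $t$ as $\asc\sigma-\fmax\sigma$. Under the boundary convention $\sigma(0)=0,\ \sigma(n+1)=n+1$ used to define peaks, valleys, double ascents and double descents, an easy edge-parity argument (the extended sequence starts and ends with an ascending edge, so ascent-to-descent switches and descent-to-ascent switches must balance) gives $\peak\sigma=\valley\sigma$; combined with $\peak+\valley+\dasc+\ddes=n$ and $\des=\peak+\ddes$ this yields $\asc\sigma=\valley\sigma+\dasc\sigma$ (in the paper's ascent convention recalled via~\eqref{eq:comp}). Substituting this into the exponent of $t$ gives exactly~\eqref{linear:A}.

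There is no genuine obstacle: the heavy lifting was done when equation~\eqref{eq:linearQ1} was established, and the only care required is in the bookkeeping between $(\valley,\dasc,\fmax)$ and $(\asc,\fmax)$, which is purely enumerative.
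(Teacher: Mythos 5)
Your proof is correct and is essentially the paper's own argument (which the paper states in a single line): substitute the specialization \eqref{case1} into the linear interpretation \eqref{eq:linearQ1} of $Q_n$ and identify the result with $A_n(t,\lambda,y,w)$ via \eqref{Equ1}. The only point worth noting is a convention ambiguity in the paper itself rather than a gap in your argument: your (correct) exponent $\valley\sigma+\dasc\sigma-\fmax\sigma=n-\des\sigma-\fmax\sigma$ equals $\asc\sigma-\fmax\sigma$ only if $\asc\sigma$ is taken to be $n-\des\sigma$ (i.e.\ counting the boundary ascent at position $n$ under the $0$--$\infty$ convention), not the $n-1-\des\sigma$ that the definition recalled via \eqref{eq:comp} literally yields; since the lemma fails under the latter reading (already for $n=1$, where the exponent would be negative), your interpretation is the intended one.
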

\begin{proof}
Applying the substitution  \eqref{case1} in Eq.~\eqref{eq:linearQ1} we obtain the right-hand side in \eqref{linear:A}  for
$Q_n(\bsfa,\bsfb,\bsfc,\bsfd,\bsfe)$, which is equal to 
$A_n(t, \lambda, y, w)$ by \eqref{Equ1}.
\end{proof}

Let $\Sym_{n,j}:=\{\sigma\in\Sym_n\colon \fmax(\sigma)=j\}$,  
${\Sym}^*_{n,j}:=\{\sigma\in \Sym_{n,j}\colon  \dd(\sigma)=0\}$
and
\begin{align}
{\Sym}^*_{n,j}(k):=\{\sigma\in \Sym^*_{n,j}\colon  \des(\sigma)=k\}.
\end{align}

\begin{theorem}\label{thm:gamma4} We have 
\begin{equation}\label{D-linearversion}
D_n(t, \lambda, y)=
\sum_{\sigma\in \Sym_{n,0}}t^{\asc\,\sigma}\lambda^{\pval\,\sigma}y^{\ppk\,\sigma}
\end{equation}
and
\begin{equation}\label{gamma-LD}
D_n(t, \lambda, y)=\sum_{k=0}^{\left\lfloor n/2\right\rfloor}\gamma_{n,k}(\lambda, y)
t^k(1+t)^{n-2k},
\end{equation}
where the gamma coefficient $\gamma_{n,k}(\lambda, y)$ has the 
following combinatorial interpretation
\begin{align}
\gamma_{n,k}(\lambda, y)=\sum_{\sigma\in{\Sym}^*_{n,0}(k)}\lambda^{\pval\,\sigma}y^{\ppk\,\sigma}.\label{gamma4}
\end{align}
\end{theorem}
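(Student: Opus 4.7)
The first identity \eqref{D-linearversion} is immediate from \eqref{linear:A}: setting $w=0$ annihilates every permutation with $\fmax\,\sigma \ge 1$ through the factor $w^{\fmax\,\sigma}$, leaving
\begin{equation*}
D_n(t,\lambda,y)\;=\;A_n(t,\lambda,y,0)\;=\;\sum_{\sigma \in \Sym_{n,0}} t^{\asc\,\sigma}\lambda^{\pval\,\sigma}y^{\ppk\,\sigma}.
\end{equation*}
The gamma-expansion \eqref{gamma-LD} coincides with \eqref{gamma-D} in Theorem~\ref{gamma-thm} and requires no further proof; the substantive new content is the linear interpretation \eqref{gamma4} of the coefficients $\gamma_{n,k}(\lambda,y)$.

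To establish \eqref{gamma4} I plan to follow the continued-fraction strategy used in the proof of Theorem~\ref{gamma-thm}, now applied to the first linear master identity \eqref{eq:linearQ1}. Specialize under the substitution
\begin{equation*}
\sfa_{0,\ell'}=\lambda,\ \sfa_{\ell,\ell'}=1\ (\ell>0);\ \ \sfb_{\ell,0}=ty,\ \sfb_{\ell,\ell'}=t\ (\ell'>0);\ \ \sfc_{\ell,\ell'}=0,\ \sfd_{\ell,\ell'}=1,\ \sfe_{\ell}=0.
\end{equation*}
Because $\sfc$ and $\sfe$ vanish identically, only permutations with $\ddes\,\sigma=0$ and $\fmax\,\sigma=0$ survive, i.e., $\sigma \in \Sym^*_{n,0}$. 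On this set, each valley contributes $\lambda$ iff $\cab(i,\sigma)=0$ and $1$ otherwise (net factor $\lambda^{\pval\,\sigma}$), each peak contributes $ty$ iff $\bca(i,\sigma)=0$ and $t$ otherwise (net factor $t^{\peak\,\sigma}y^{\ppk\,\sigma}$), each double ascent contributes $1$, and $\peak\,\sigma=\des\,\sigma$. Thus the left-hand side of \eqref{eq.thm.permutations.Jtype.final1} collapses to $\sum_{n\ge 0} z^n \sum_{\sigma \in \Sym^*_{n,0}} t^{\des\,\sigma}\lambda^{\pval\,\sigma}y^{\ppk\,\sigma}$.

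On the right-hand side of \eqref{eq.thm.permutations.Jtype.final1}, the coefficients from \eqref{def.weights.permutations.Jtype.final1} compute to $\sfa^\star_{n-1}=\lambda+n-1$, $\sfb^\star_{n-1}=t(y+n-1)$, $\sfc^\star_{n-1}=0$, $\sfd^\star_{n-1}=n$, $\sfe_n=0$, yielding $\gamma_n = n$ and $\beta_n = t(\lambda+n-1)(y+n-1)$. This is precisely the J-fraction \eqref{cf:gamma}, whose coefficient of $z^n$ was identified in the proof of Theorem~\ref{gamma-thm} as $\sum_k \gamma_{n,k}(\lambda,y)\,t^k$. Extracting the coefficient of $z^n t^k$ on the two sides then yields \eqref{gamma4}. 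The only subtle point to verify is that the index $\cab(i,\sigma)-1$ appearing on $\sfd$ in \eqref{eq:linearQ1} remains non-negative on $\Dasc\setminus\Fmax$; this is automatic because whenever $\sigma(i)$ is a double ascent but not a record, taking $j^\star := \max\{j<i : \sigma(j)>\sigma(i)\}$ gives $j^\star<i-1$ (since $\sigma(i-1)<\sigma(i)$), and the pair $(\sigma(j^\star),\sigma(j^\star+1))$ is then a $\cab$-pattern witnessing $\cab(\sigma(i),\sigma)\ge 1$.
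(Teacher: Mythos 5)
Your proposal is correct and follows essentially the same route as the paper: both proofs get \eqref{D-linearversion} by setting $w=0$ in \eqref{linear:A}, and both obtain \eqref{gamma4} by specializing the linear master identity \eqref{eq:linearQ1} together with the master J-fraction of Theorem~\ref{thm.permutations.Jtype.final1}. The only (minor) difference is in the choice of specialization: you place the weight $t$ on the peaks so that the linear side carries $t^{\des\,\sigma}$ and matches the J-fraction \eqref{cf:gamma} directly, whereas the paper uses the substitution \eqref{case1bis} (weight $t$ on valleys and double ascents, giving $t^{\asc\,\sigma}$), passes through the intermediate cyclic--linear identity \eqref{gamma:cyc-lin}, and then invokes the already-established cyclic interpretation \eqref{gamma1}; your verification that the index $(31\text{-}2)(i,\sigma)-1$ on $\sfd$ is non-negative for $i\in\Dasc\setminus\Fmax$ is a correct and welcome detail that the paper leaves implicit.
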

\begin{proof}
As $D_n(t, \lambda, y)=A_n(t, \lambda, y, 0)$,  letting $w=0$ in \eqref{linear:A} we obtain \eqref{D-linearversion}.
Taking the following substitutions 
\begin{equation}\label{case1bis}
\begin{cases}
\sfa_{\ell, \ell'}&=t\,(\ell> 0), \quad \sfa_{0, \ell'}=\lambda t;\\
\sfb_{\ell, \ell'}&=1\,(\ell'> 0),\quad \sfb_{\ell, 0}=y;\\
\sfc_{\ell, \ell'}&=0, \quad\sfd_{\ell, \ell'}=t, \quad\sfe_{\ell}=0,
\end{cases}
\end{equation}
in \eqref{def.Qn.firstmaster} and \eqref{eq:linearQ1}, respectively, 
we obtain
\begin{equation}\label{gamma:cyc-lin}
\sum_{\sigma\in\SDE^*_n}t^{\exc\,\sigma}\lambda^{\pex\,\sigma}y^{\eareccpeak\,\sigma}=
\sum_{\sigma \in {\Sym}^*_{n,0}}t^{\asc\,\sigma}\lambda^{\pval\,\sigma}y^{\ppk\,\sigma}.
\end{equation}
Extracting  the coefficient of $t^{k}$ in \eqref{gamma:cyc-lin}  we deduce \eqref{gamma-LD} from \eqref{Equ1}.
\end{proof}

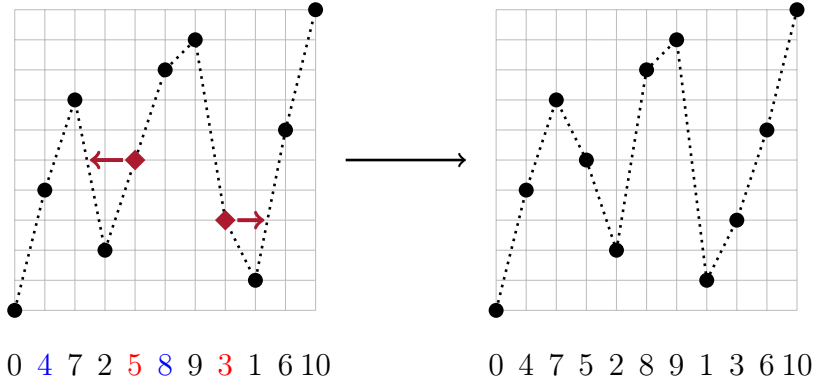
\begin{figure}[t]
\begin{center}
\begin{tikzpicture}[scale=0.4] 	
\draw[step=1,lightgray,thin] (0,0) grid (10,10); 
	\tikzstyle{ridge}=[draw, line width=1, dotted, color=black] 
	\path[ridge] (0,0)--(1,4)--(2,7)--(3,2)--(4,5)--(5,8)--(6,9)--(7,3)--(8,1)--(9,6)--(10,10); 
	\tikzstyle{node0}=[circle, inner sep=2, fill=black] 
	\tikzstyle{node1}=[rectangle, inner sep=3, fill=mhcblue] 
	\tikzstyle{node2}=[diamond, inner sep=2, fill=davidsonred] 
	\node[node0] at (0,0) {}; 
	\node[node0] at (1,4) {}; 
	\node[node0] at (2,7) {}; 
	\node[node0] at (3,2) {}; 
	\node[node2] at (4,5) {}; 
	\node[node0] at (5,8) {}; 
	\node[node0] at (6,9) {}; 
	\node[node2] at (7,3) {}; 
	\node[node0] at (8,1) {};
	\node[node0] at (9,6){};
	\node[node0] at(10,10){}; 
	\tikzstyle{hop1}=[draw, line width = 1.5, color=davidsonred,->]
	\tikzstyle{hop2}=[draw, line width = 1.5, color=davidsonred,<-] 
	\path[hop2] (2.5,5)--(3.6,5);
	\path[hop1] (7.4,3)--(8.3, 3);
	\tikzstyle{pi}=[above=-1] 
	\node[pi] at (0,0) {0}; 
	\node[pi] at (1,0) {$\blue{4}$}; 
	\node[pi] at (2,0) {$7$}; 
	\node[pi] at (3,0) {2}; 
	\node[pi] at (4,0) {\red{5}}; 
	\node[pi, color=davidsonred] at (5,0) {$\blue{8}$}; 
	\node[pi] at (6,0) {$9$}; 
	\node[pi] at (7,0) {\red{3}}; 
	\node[pi] at (8,0) {1};
	\node[pi] at (9,0) {6};
	\node[pi] at (10,0) {10};
	\path[draw,line width=1,->] (11,5)--(15,5); 
	\begin{scope}[shift={(16,0)}] 
	\draw[step=1,lightgray,thin] (0,0) grid (10,10); 
	\path[ridge] (0,0)--(1,4)--(2,7)--(3,5)--(4,2)--(5,8)--(6,9)--(7,1)--(8,3)--(9,6)--(10,10); 
	\node[node0] at (0,0) {}; 
	\node[node0] at (1,4) {}; 
	\node[node0] at (2,7) {}; 
	\node[node0] at (3,5) {}; 
	\node[node0] at (4,2) {}; 
	\node[node0] at (5,8) {}; 
	\node[node0] at (6,9) {}; 
	\node[node0] at (7,1) {}; 
	\node[node0] at (8,3) {}; 
	\node[node0] at (9,6){};
	\node[node0] at (10,10){};
        \node[pi] at (0,0) {0}; 
	\node[pi] at (1,0) {$4$}; 
	\node[pi] at (2,0) {$7$}; 
	\node[pi] at (3,0) {5}; 
	\node[pi] at (4,0) {2}; 
	\node[pi] at (5,0) {$8$}; 
	\node[pi] at (6,0) {$9$};
	\node[pi] at (7,0) {1}; 
	\node[pi] at (8,0) {3};
	\node[pi] at (9,0){6};
	\node[pi]  at (10,0){10};
\end{scope}
\end{tikzpicture}
\end{center}
\caption{Valley-hopping on $\sigma=472589316\in \Sym_9$ with $S=\{3, 4, 5\}$
yields $\varphi'_{S}(\sigma)=475289136$\label{valhopFmax}}
\end{figure}

\subsection{Group action proof of \eqref{gamma-LD}}

In the following we give a direct proof of \eqref{gamma-LD} by applying  the  well-known  \textit{valley-hopping} action, see 
Foata and Strehl
\cite{FS74},  Shapiro, Woan, and Getu \cite{SWG83}
and  Br\"and\'en \cite{Bra08}.
Let $\sigma\in\Sym_n$ with boundary condition $\sigma(0)=0$ and $\sigma(n+1)=n+1$.
 Recall that for $x\in[n]$, see \cite{FS74}, 
 the {\em$x$-factorization} of $\sigma$ is defined by
\begin{align}\label{x-factorization}
 \sigma=w_1 w_2x w_3 w_4,
\end{align}
 where $w_2$ (resp.~$w_3$) is the maximal contiguous subword immediately to the left (resp.~right) of $x$ whose letters are all less than $x$. 
 Note that $w_1,\ldots, w_4$ may be empty. For instance,   
 if $x$ is a double ascent (resp. double descent), then $w_3=\varnothing$ (resp. $w_2=\varnothing$), and if $x$ is a valley then
$w_2=w_3=\varnothing$.
Foata and Strehl~\cite{FS74} considered a mapping $\varphi_x$ on permutations by
exchanging $w_2$ and $w_3$ in \eqref{x-factorization}:
$$
\varphi_x(\sigma)=w_1 w_3x w_2 w_4.
$$
For instance, if $x=3$ and $\sigma=472589316\in\Sym_9$, then $w_1=472589,w_2=\varnothing,w_3=1$ and $w_4=6$.
Thus $\varphi_3(\sigma)=472589136$.
It is known (see \cite{FS74}) that  $\varphi_x$ is an involution acting on $\Sym_n$ and  that $\varphi_x$ and $\varphi_y$ commute for all $x,y\in[n]$. Br\"and\'en~\cite{Bra08} introduced the modified mapping  $\varphi'_x$ by
\begin{align*}
\varphi'_x(\sigma):=
\begin{cases}
\varphi_x(\sigma),&\text{if $x$ is neither a  peak  nor foremaximum  of $\sigma$ 
};\\
\sigma,& \text{if $x$ is a peak or foremaximum  of $\sigma$.}
\end{cases}
\end{align*}
Note that the boundary condition matters, e.g.,  in the above example, 
if $\sigma(0)=10$ instead, then $4$ becomes a valley and will be fixed by $\varphi'_4$.
Also, we have $\varphi'_{x}(\sigma)=\sigma$ if $x$ is a peak, valley or foremaximum, otherwise $\varphi'_{x}(\sigma)$ exchanges $w_2$ and $w_3$ in the $x$-factorization of $\sigma$, which is equivalent to moving $x$ from a double ascent to a double descent or vice versa. Then $\varphi'_{x}$'s are involutions and commute.  Hence,  
for any subset $S\subseteq[n]$ we can define the map $\varphi'_S :\Sym_n\rightarrow\Sym_n$ by
\begin{align*}
\varphi'_S(\sigma)=\prod_{x\in S}\varphi'_x(\sigma).
\end{align*}
In other words,  the group $\Z_2^n$ acts on $\Sym_n$ via the mapping $\varphi'_S$ with $S\subseteq[n]$. 
For example,  let  $\sigma=472589316\in \Sym_9$, then $\Fmax(\sigma)=\{4, 8\}$.
If  $S=\{3, 4,5\}$, we have $\varphi'_{S}(\sigma)=475289136$, 
see Fig.~\ref{valhopFmax} for an illustration.

Recall  that  a permutation $\sigma$ has a descent at $i$ with $1\leq i<n$ if
 $\sigma(i)>\sigma(i+1)$.
We will say that a \emph{run} of a permutation $\sigma$ is a maximal interval of consecutive arguments of $\sigma$  on which the values of $\sigma$ are monotonic. If the values of $\sigma$ increase on the interval then we speak of a rising run, else a descreasing run.

\begin{lemma}\label{lem:231312}
For $\sigma\in \Sym_n$ 
 the quintuple permutation statistic $(\peak, \valley, \fmax, \ppk, \pval)\sigma$ is invariant under the group action $\varphi'_S$ with  $S\subset [n]$, i.e.,
 \begin{align}\label{quintuple}
 (\peak, \valley, \fmax, \ppk, \pval)\;\sigma=(\peak, \valley, \fmax, \ppk, \pval)\;\varphi'_S(\sigma).
\end{align}
\end{lemma}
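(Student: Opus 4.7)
The plan is to exploit the fact that $\varphi'_S=\prod_{x\in S}\varphi'_x$ is a product of commuting involutions, so it suffices to verify that a single $\varphi'_x$ leaves the quintuple $(\Peak,\Valley,\Fmax,\ppk,\pval)$ invariant. If $x\in\Peak\,\sigma\cup\Valley\,\sigma\cup\Fmax\,\sigma$, then $\varphi'_x(\sigma)=\sigma$ and there is nothing to prove. Otherwise $x$ is either a double ascent that is not a foremaximum, or a double descent; in the $x$-factorization $\sigma=w_1w_2xw_3w_4$ exactly one of $w_2,w_3$ is empty, every letter of $w_2\cup w_3$ is $<x$, and the boundary conventions $\sigma(0)=0$, $\sigma(n+1)=n+1$ force $w_1$ to be nonempty, with its last letter $>x$.

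I would first show that $\Peak$, $\Valley$ and $\Fmax$ are preserved. A neighbor-by-neighbor inspection in the $x$-factorization shows that every letter other than $x$ keeps its type (peak, valley, double ascent or double descent) under $\varphi'_x$, while $x$ itself merely toggles between double ascent and double descent; in particular $\Peak$ and $\Valley$ are unchanged. For $\Fmax$, the multiset of letters preceding any given position is unchanged outside the middle block $w_2xw_3$, and inside the middle block no letter can be a record (the last letter of $w_1$ already exceeds every element of $\{x\}\cup w_2\cup w_3$); hence records are preserved, and combined with the preservation of the set of double ascents this yields $\Fmax\,\sigma=\Fmax\,\varphi'_x(\sigma)$.

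The crux is the invariance of $\pval$ and $\ppk$. Here I would use the following label-word characterization. For a valley $i$ of $\sigma$ at position $p$, label each $\sigma(j)$ with $j<p$ by $L$ if $\sigma(j)<i$ and by $H$ if $\sigma(j)>i$; then $(31\text{-}2)(i,\sigma)=0$ iff the resulting word avoids the factor $HL$, equivalently has the shape $L^aH^b$. A dual characterization using the suffix $\sigma(p+1)\cdots\sigma(n)$ governs $(2\text{-}31)(i,\sigma)$ at a peak. Granting this, I would case-analyze on the location of $i$ among $w_1,w_2,w_3,w_4$ and on whether $i<x$ or $i>x$: the prefix is unchanged when $i\in w_1$; every letter of $w_2xw_3$ gets label $L$ when $i>x$; and for $i<x$ with $i$ in $w_2$, $w_3$ or $w_4$, the change effected by $\varphi'_x$ on the label word amounts to inserting or deleting the single $H$ coming from $x$ adjacent to the anchoring $H$ at the end of $w_1$, or to transposing two sub-blocks that, in the zero-count situation, are forced to consist entirely of $H$'s. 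The argument for $\ppk$ is the mirror image.

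The main obstacle I anticipate is the last case analysis, particularly when $i\in w_2\cup w_3$, because there the absolute position of $i$ shifts under $\varphi'_x$ and one must compare label words of different lengths. The saving observation is that $w_1$ is nonempty and ends with an $H$ (its last letter exceeds $x>i$), which anchors the label word and reduces each subcase to a short bookkeeping check of the shape $L^aH^b$.
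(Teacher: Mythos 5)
Your proposal is correct, but it reaches the invariance of $(\ppk,\pval)$ by a genuinely different route than the paper. Both proofs reduce to a single hop $\varphi'_x$ and dispose of $(\peak,\valley,\fmax)$ by local inspection. For the pattern statistics, the paper's key observation is global: for a peak $i$, the count $(2\text{-}31)(i,\sigma)$ equals the number of maximal decreasing runs lying to the right of $i$ that cross the level $i$, i.e.\ the number of consecutive peak--valley pairs $(\sigma(k),\sigma(j))$ with $\sigma(k)>i>\sigma(j)$; since the ordered sequence of peaks and valleys is untouched by valley-hopping, the \emph{exact} count is invariant, and $\pval$ is handled symmetrically. You instead track only the \emph{vanishing} of the count, encoded as the prefix (resp.\ suffix) $\{L,H\}$-word having shape $L^aH^b$, and you follow how one hop perturbs that word: the middle block is all $L$'s when $i>x$; an $H$ is inserted or deleted next to the anchoring $H$ supplied by the last letter of $w_1$ (resp.\ the first letter of $w_4$); or a block that the zero-count hypothesis forces to be all $H$'s is transposed. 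I checked that the cases you list do close up, and the one point needing care is exactly the one you flag --- that $w_1$ is nonempty with last letter $>x$ --- though note that in the double-ascent case this follows from $x$ not being a foremaximum (hence not a record), not merely from the boundary convention. The trade-off: the paper's argument is shorter and yields the stronger, reusable fact that the refined counts themselves (not just their vanishing) are preserved at peaks and valleys, while yours is more elementary and self-contained but costs a case analysis and delivers only what the lemma literally needs.
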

\begin{proof}
Clearly  the triple statistic $(\peak, \valley, \fmax)$ is invariant under the valley-hopping action, so it remains to verify the invariance for bi-statistic $(\ppk, \pval)$.
For  each $i\in[n]$, the statistic
 $\bca(i,\sigma)$ equals the number of maximal decreasing runs at the right of $i$, i.e., the sequences of values with consecutives arguments 
 $k, k+1, \ldots, j$ such that $\sigma^{-1}(i)<k<j$,
 $$
 \sigma(k)>i>\sigma(j)\qquad 
 \textrm{with $\sigma(k)\in {\rm Peak}(\sigma)$ and $\sigma(j)\in {\rm Valley}(\sigma)$},
 $$
 see Fig.\ref{fig.4}.
Since  the elements of $\textrm{Peak}(\sigma)$ and $\textrm{Valley}(\sigma)$ are fixed  by the valley-hopping action,  the number 
$\bca(i, \sigma)$ is invariant under the group action if 
$i\in\Peak(\sigma)$.  This proves the invariance of $\ppk$ in \eqref{def:ppk}.
The case  of $\pval$ is similar and omitted.
\end{proof}

\begin{figure}[t]
\begin{center}
\setlength {\unitlength} {0.8mm}
\begin {picture} (70,45) \setlength {\unitlength} {1mm}
\thinlines





%

\put(-37, -2){$0$}
\put(-36, 2){\circle*{1.3}}
\put(-36,2){\line(1,1){18}}
\put(-18, 20){\circle*{1.3}}
\put(-18,20){\line(1,-1){15}}
\put(-23,23){$\sigma(i)$}
\put(-3,5){\circle*{1.3}}
\put(-3,5){\line(1,1){21}}
\put(15,28){$\sigma(k)$}
\put(18, 26){\circle*{1.3}}

\put(18,26){\line(1,-1){24}}
\put(40,-2){$\sigma(j)$}
\put(42,2){\circle*{1.3}}
\put(42,2){\line(1,1){10}}
\put(52,12){\circle*{1.3}}
\put(52,12){\line(1,-1){7}}

\put(59,5){\line(1,1){26}}
\put(59,5){\circle*{1.3}}
\put(85,32){$\infty$}


\end{picture}
\vspace{10pt}
\end{center}
\caption{\label{fig.4}Valley-hopping on statistics $\ppk$
\label{231}}
\end {figure}
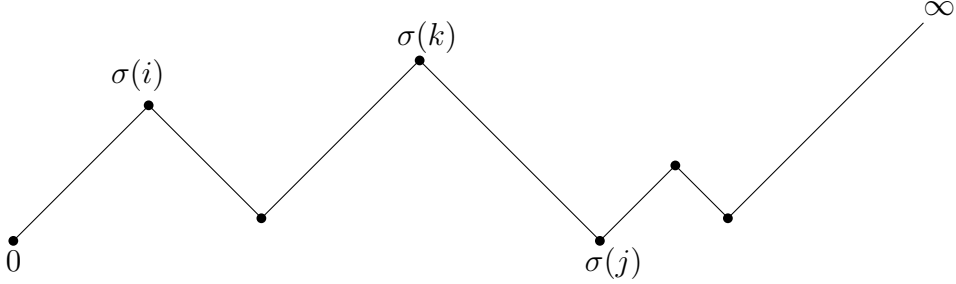

\begin{lemma} We have 
\begin{align}\label{eq:fmaxgamma}
\sum_{\sigma\in \Sym_{n,j}}\lambda^{\pval\,\sigma}y^{\ppk\,\sigma} t^{(\asc-\fmax) \sigma} = 
 \sum_{k=0}^{\lfloor (n-j)/2 \rfloor} 
 \Biggl(\sum_{\sigma\in \Sym^*_{n,j}(k)}\lambda^{\pval\,\sigma}y^{\ppk\,\sigma}\Biggr)\, t^k (1+t)^{n-j-2k}.
\end{align}
\end{lemma}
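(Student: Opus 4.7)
My strategy is to apply the modified Foata--Strehl--Br\"and\'en valley-hopping action of $\mathbb{Z}_2^n$ on $\Sym_n$ via the involutions $\varphi'_S$, $S\subseteq[n]$, restricted to the stable subset $\Sym_{n,j}$. By Lemma~\ref{lem:231312} the statistics $\peak$, $\valley$, $\fmax$, $\pval$ and $\ppk$ are constant on each orbit, so the weight $\lambda^{\pval\sigma}y^{\ppk\sigma}$ can be pulled out of the sum over an orbit. Recall also the standard fact used to define the action: $\varphi'_x$ is the identity precisely when $x$ is a peak, valley, or foremaximum of $\sigma$, and otherwise it is the involution that toggles the status of $x$ between double ascent and double descent while leaving the local type of every other value unchanged.

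In each orbit I designate as canonical representative the unique element $\hat\sigma$ with $\ddes(\hat\sigma)=0$, obtained by applying $\varphi'_x$ to each double-descent value $x$ (the order is irrelevant, since the $\varphi'_x$'s commute). Orbits in $\Sym_{n,j}$ are therefore indexed by $\bigsqcup_k \Sym^*_{n,j}(k)$. For $\hat\sigma\in\Sym^*_{n,j}(k)$ the $0$--$\infty$ convention yields $\peak(\hat\sigma)=\valley(\hat\sigma)$, and since $\des(\hat\sigma)=\peak(\hat\sigma)+\ddes(\hat\sigma)$ with $\ddes(\hat\sigma)=0$, we get $\peak(\hat\sigma)=k$, hence $\dasc(\hat\sigma)=n-2k$. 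Exactly $j$ of these double ascents are foremaxima, leaving $n-2k-j$ ``free'' values which can be flipped independently; the orbit of $\hat\sigma$ therefore has size $2^{n-2k-j}$.

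For an orbit element $\sigma_S$ obtained from $\hat\sigma$ by flipping a subset of $m$ free values, we have $\des(\sigma_S)=k+m$ and $\fmax(\sigma_S)=j$, so that (using $\asc+\des=n$) the $t$-exponent is $(\asc-\fmax)(\sigma_S)=n-k-j-m$. Summing the $t$-weight over the orbit yields
\begin{equation*}
\lambda^{\pval\hat\sigma}\,y^{\ppk\hat\sigma}\sum_{m=0}^{n-2k-j}\binom{n-2k-j}{m}t^{n-k-j-m}
\;=\;\lambda^{\pval\hat\sigma}\,y^{\ppk\hat\sigma}\,t^{k}(1+t)^{n-2k-j},
\end{equation*}
and summing over orbit representatives $\hat\sigma\in\Sym^*_{n,j}(k)$ and over $k\in\{0,1,\ldots,\lfloor(n-j)/2\rfloor\}$ produces the right-hand side of the identity.

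The only real obstacle is verifying that (i) each flip of a free dasc increases $\des$ by exactly $1$ while preserving $\peak$, $\valley$, $\fmax$, $\pval$ and $\ppk$, and (ii) the $\varphi'_x$'s for distinct free values commute and act independently on $\hat\sigma$'s orbit, so that the orbit has the predicted size $2^{n-2k-j}$. Both facts follow from the standard case analysis on the $x$-factorization $\sigma=w_1w_2xw_3w_4$ already invoked in the proof of Lemma~\ref{lem:231312}; once they are in hand the rest of the argument is routine bookkeeping.
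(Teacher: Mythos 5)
Your proof is correct and follows essentially the same route as the paper: decompose $\Sym_{n,j}$ into valley-hopping orbits, use Lemma~\ref{lem:231312} to pull the $\lambda^{\pval}y^{\ppk}$ weight out of each orbit, take the unique double-descent-free element as representative, and sum $t^{(\asc-\fmax)}$ over the orbit to produce $t^k(1+t)^{n-j-2k}$. The only cosmetic difference is that you track $\des(\sigma_S)=k+m$ and expand a binomial sum, whereas the paper writes the orbit sum directly as $t^{\valley(\bar\sigma)}(1+t)^{(\dasc-\fmax)(\bar\sigma)}$ via $\asc-\fmax=\valley+\dasc-\fmax$; the underlying bookkeeping is identical.
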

\begin{proof}
For any  permutation $\sigma\in \Sym_n$,  let $\Orb(\sigma)=\{g(\sigma): g\in\Z_2^n\}$ be the orbit of $\sigma$ under the valley-hopping. 
The valley-hopping divides the set $\Sym_n$ into disjoint orbits. 
 Moreover, for $\sigma\in \Sym_n$, if $x$ is a double descent of $\sigma$, then 
 $x$ is a double ascent of $\varphi'_x(\sigma)$.
 Hence, there is a unique  
  permutation in each orbit which has no 
 double descent.
 Now, let 
  $\bar{\sigma}$ be such a unique element  in $\Orb(\sigma)$, then 
  \begin{align*}
  \dasc(\bar{\sigma})&=n-\peak(\bar{\sigma})-\valley(\bar{\sigma});\\
   \des(\bar{\sigma})&=\peak(\bar{\sigma})=\valley(\bar{\sigma}).
   \end{align*}
As $\asc-\fmax=valley+\dasc-\fmax$, we have 
\begin{align}\label{orbit}
\sum_{\sigma'\in\Orb(\sigma)}t^{(\asc-\fmax)\,\sigma'}=
t^{\valley(\bar{\sigma})}(1+t)^{(\dasc-\fmax)(\bar{\sigma})}
=t^{\des(\bar{\sigma})}(1+t)^{n-2\des \sigma -\fmax\,(\bar{\sigma})}.
\end{align}
Therefore, by Lemma~\ref{lem:231312}, we obtain \eqref{eq:fmaxgamma}.
\end{proof}

Clearly \eqref{gamma-LD} corresponds to the $j=0$ case of \eqref{eq:fmaxgamma}.

\section{Concluding remarks}
We notice  that the bistatistics $(\des_2, \fix)$ and $(\pex, \fix)$  are not equidistrubuted on $\Sym_4$ and  the distribution of 
$(\des_2, \pex)$  over $\Sym_6$ is not symmetric.  Let
\begin{align}
P_n(x,y)=\sum_{\sigma \in \Sym_n} x^{\des_2(\sigma)}y^{\ear(\sigma)}.
\end{align}  
We speculate that  the polynomial $P_n(x,y)$ is invariant under 
$x\leftrightarrow y$. 
\begin{conj}  The distribution of $(\des_2, \ear)$
  over permutations is symmetric.
\end{conj}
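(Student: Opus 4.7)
The plan is to prove the symmetry of $P_n(x,y)$ by exhibiting a bijection $\psi:\Sym_n\to\Sym_n$ such that $(\des_2,\ear)\sigma=(\ear,\des_2)\psi(\sigma)$. The natural starting point is the fact, recorded in the corollary following Theorem~\ref{mainresult:1}, that the bistatistic $(\pex,\ear)$ is symmetric on $\Sym_n$ (as it is equidistributed with $(\ear,\pex)$ via the $\lambda\leftrightarrow y$ symmetry in the J-fraction \eqref{cf:generalA}). Thus it suffices to transfer the joint distribution of $(\des_2,\ear)$ to that of $(\pex,\ear)$, i.e.\ to construct $\eta:\Sym_n\to\Sym_n$ with $(\des_2,\ear)\sigma=(\pex,\ear)\eta(\sigma)$; then the composition $\psi=\eta^{-1}\circ(\ear\leftrightarrow\pex\text{-swap})\circ\eta$ does the job.

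First I would take the bijection $\Phi_1$ of Theorem~\ref{thm1.8}, which satisfies $(\des,\des_2)\sigma=(\exc,\ear)\Phi_1(\sigma)$, and carefully trace how the statistic $\ear$ on $\sigma$ is transported by $\Phi_1$. Concretely, one has to identify a cyclic statistic $X$ on $\tau=\Phi_1(\sigma)$ such that $\ear(\sigma)=X(\tau)$, by reading off from the biword construction of $\Phi_1$ which indices of $\sigma$ that are descent tops\,+\,left-to-right maxima\,+\,right-to-left minima correspond, after the biword reshuffling, to a specific geometric pattern in the arc diagram of $\tau$. If one is lucky, this $X$ will turn out to be either $\pex$ or $\pcyc$; in either case the equidistribution corollary after Theorem~\ref{mainresult:1} finishes the proof, yielding the desired $\eta$ and hence $\psi$.

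A fallback strategy, in case no such clean $X$ emerges, is to look for a direct J-fraction expansion of $\sum_{n\ge 0} z^n P_n(x,y)$. Given the pattern of the other results in the paper, one would guess coefficients of the form $\gamma_n=\alpha_n+n(x+y)$ and $\beta_n=\mu_n(x+n-1)(y+n-1)$ for suitable constants, which are manifestly symmetric in $x,y$. Verifying such a J-fraction could proceed by specializing the first master polynomial $Q_n(\bsfa,\bsfb,\bsfc,\bsfd,\bsfe)$ in its linear version \eqref{eq:linearQ1}, picking the substitution that captures $x^{\des_2}y^{\ear}$; if \eqref{eq:linearQ1} does not quite fit, one may need a new master J-fraction tailored to the statistics $(31\text{-}2,2\text{-}31)$ restricted to peaks that are left-to-right maxima.

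The principal obstacle is a genuine mismatch between the two statistics: $\des_2$ is a pattern on the one-line word involving a left-to-right maximum, while $\ear$ is an intrinsically cyclic statistic combining exclusive antirecord with cycle peak. The remark in the concluding section that already $(\des_2,\fix)$ and $(\pex,\fix)$ fail to be equidistributed on $\Sym_4$, and that $(\des_2,\pex)$ is \emph{not} symmetric on $\Sym_6$, shows that no naive swap of $\des_2$ with $\pex$ (or with any generic cyclic companion) can work; the bijection $\psi$ must exploit a subtle interaction that is invisible to the marginal $\fix$ but is forced by fixing $\ear$. Pinning down this interaction -- most likely through the detailed tracking of $\ear$ under $\Phi_1$ described above -- is where the real work lies.
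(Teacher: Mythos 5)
This statement is left open in the paper itself: it appears only as a conjecture in the concluding remarks, with no proof offered, so the only question is whether your argument stands on its own. It does not. What you have written is a research plan rather than a proof: the two steps that would actually establish the symmetry --- identifying a statistic $X$ on $\tau=\Phi_1(\sigma)$ with $\ear(\sigma)=X(\tau)$ and showing $X\in\{\pex,\pcyc\}$, or alternatively producing and verifying a $J$-fraction for $\sum_{n}z^nP_n(x,y)$ whose coefficients are symmetric in $x$ and $y$ --- are both left as hopes (``if one is lucky'', ``one would guess''). Until one of them is carried out, nothing is proved.

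Moreover, the first route faces a structural difficulty that you acknowledge but underestimate. The bijection $\Phi_1$ is built entirely from the linear data of $\sigma$ (descent tops and bottoms together with the refined patterns $(31\text{-}2)(i,\sigma)$ and $(2\text{-}31)(i,\sigma)$) and converts it into the cyclic data of $\tau$; the statistic $\ear$ on $\sigma$ is itself cyclic, so its image under $\Phi_1$ is a statistic on $\tau$ that has no reason to be expressible in terms of $\tau$'s own arc diagram, let alone to coincide with $\pex(\tau)$ or $\pcyc(\tau)$. Note also that your target identity $(\des_2,\ear)\,\sigma=(\ear,\pex)\,\Phi_1(\sigma)$ (or its variant with $\pcyc$) would prove the strictly stronger claim that $(\des_2,\ear)$ is equidistributed with $(\ear,\pex)$, which the authors do not even conjecture; given that they explicitly record the failure of the analogous naive transfers (e.g.\ $(\des_2,\pex)$ is not symmetric on $\Sym_6$ and $(\des_2,\fix)$, $(\pex,\fix)$ already differ on $\Sym_4$), this stronger claim should be tested on small $n$ before being relied upon. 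The fallback route has the same defect in a different guise: the master polynomials $Q_n$ and $\widehat{Q}_n$ and their linear versions each encode statistics that are all cyclic or all linear on the same permutation, whereas $(\des_2,\ear)$ mixes the two, so no existing specialization of \eqref{eq:linearQ1} produces $x^{\des_2\,\sigma}y^{\ear\,\sigma}$, and the ``new master $J$-fraction'' you would need is precisely the missing content. The conjecture remains open, and your proposal does not close it.
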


 By Theorem~\ref{mainresult:3} and Eq. \eqref{cf:generalA},  we can reformulate 
Conjecture~\ref{conj:1} as follows.

\begin{conjecture}
\begin{align}
\sum_{n\geq 0}\sum_{\sigma\in \Sym_n} y^{\des_2\sigma} \lambda^{\cyc\sigma} \,z^n
=\frac{1}{1-\lambda z-\cfrac{\lambda yz^2}{1-(\lambda+2)\, z-
\cfrac{(\lambda+1)(y+1) z^2}{\cdots}}}
\end{align}
with $\gamma_n=\lambda+2n$ and $\lambda_n=(\lambda+n-1)(y+n-1)$.
\end{conjecture}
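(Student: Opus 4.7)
\medskip

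\noindent\textbf{Proof proposal.} By the specialization argument already carried out in the paper, the stated J-fraction for $\sum_{\sigma\in \Sym_n} y^{\pex\,\sigma}\lambda^{\cyc\,\sigma}$ is an immediate consequence of \eqref{cf:generalA} at $(t,w)=(1,\lambda)$ together with \eqref{eq:c6} in Theorem~\ref{mainresult:3}. Hence the conjecture is equivalent to producing a bijection $\Psi\colon \Sym_n\to \Sym_n$ such that
\begin{equation*}
(\des_2,\cyc)\,\sigma \;=\; (\pex,\cyc)\,\Psi(\sigma) \qquad (\sigma\in\Sym_n),
\end{equation*}
or, alternatively, establishing the same J-fraction directly for the left-hand side via a Motzkin-path encoding weighted by $y^{\des_2}\lambda^{\cyc}$ in the spirit of Flajolet--Viennot.

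My first plan is the bijective route, starting from the transformation $\varphi$ of Lemma~\ref{des2=pcyc}, which already satisfies
$(\des_2,\des,\fmax,\rec)\,\varphi(\sigma)=(\pcyc,\exc,\fix,\cyc)\,\sigma$. This almost solves the problem: it converts $\des_2$ into $\pcyc$, and $\fix$ into $\fix$, but it sends $\rec$ (not $\cyc$) to $\cyc$. The idea then is to postcompose $\varphi$ with an involution $\rho$ on $\Sym_n$ fixing $(\des_2,\fmax)$ coordinate-wise but sending $(\pcyc,\fix)$ to $(\pex,\fix)$, so that the composite $\Psi:=\rho\circ\varphi^{-1}$ (reinterpreted on the target side) converts the statistic triple $(\pcyc,\fix,\cyc)$ into $(\pex,\fix,\cyc)$ while keeping $\cyc$ fixed. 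Good candidates for $\rho$ are obtained by combining  $\Phi_1^{-1}\circ\Phi_{SZ}$ (which moves between the two combinatorial models supplied by the master J-fractions of Theorems~\ref{thm.permutations.Jtype.final1} and \ref{thm.permutations.Jtype.final2}), refined  by tracking the level statistic at fixed points to keep $\fix$ intact.

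If the bijective route resists, a parallel analytic plan is to mimic the proof of Theorem~\ref{mainresult:1}: introduce a new master polynomial $\widetilde Q_n$ indexed by \emph{linear} refinements $(31\text{-}2)(i,\sigma),(2\text{-}31)(i,\sigma)$ (as in the Second linear version of $Q_n$) \emph{and} weighted by $\lambda^{\cyc(\sigma)}$, and derive a J-fraction for it by adapting the Françon--Viennot bijection so that the cycle weight $\lambda^{\cyc}$ is tracked by a level statistic on the associated labelled Motzkin paths. Specializing $\sfa,\sfb,\sfc,\sfd,\sfe$ analogously to \eqref{case2} should then yield the claimed $\gamma_n=\lambda+2n$, $\beta_n=(\lambda+n-1)(y+n-1)$, with the factor $(\lambda+n-1)$ arising from the number of ways to continue an existing cycle or open a new one, and $(y+n-1)$ from the refined Foata--Strehl-type count that distinguishes descents of type $2$ from ordinary descents.

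The main obstacle, common to both routes, is that $\des_2$ is a genuinely linear statistic whereas $\cyc$ is a cycle statistic, and none of the known bijections $\varphi$, $\Phi_1$, $\Phi_2$, $\Phi_{SZ}$, or Foata's fundamental transformation preserves a cyclic invariant while rearranging a linear one in the required way; they all trade linear for cyclic or vice versa. Overcoming this will require either (i)~an intrinsically new insertion-based bijection that simultaneously respects the cycle partition and records the left-to-right history governing $(31\text{-}2)(i,\sigma)$, or (ii)~a new variant of the master J-fraction that mixes the two worlds — which would be of independent interest and would in particular generalize the present Conjecture.
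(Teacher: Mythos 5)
You should first note that the paper does not prove this statement: it is presented as an open conjecture in the concluding remarks, obtained by \emph{reformulating} Conjecture~\ref{conj:1} of Baril and Kirgizov. The paper's only supporting reasoning is exactly the observation you make in your first paragraph: by Theorem~\ref{mainresult:3} (specifically \eqref{eq:c6}) and the specialization of \eqref{cf:generalA} at $(t,w)=(1,\lambda)$, the right-hand side equals $\sum_n z^n\sum_{\sigma}y^{\pex\,\sigma}\lambda^{\cyc\,\sigma}$, so the displayed identity is equivalent to the equidistribution of $(\des_2,\cyc)$ and $(\pex,\cyc)$. That part of your proposal is correct and coincides with the paper's own derivation of the reformulation.

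Beyond that, however, your proposal contains no proof, and this is a genuine gap rather than a stylistic difference. Your bijective plan founders exactly where you say it does: Lemma~\ref{des2=pcyc} gives $(\des_2,\rec)\,\varphi(\sigma)=(\pcyc,\cyc)\,\sigma$, so $\varphi$ trades the cycle statistic $\cyc$ for the linear statistic $\rec$ on the $\des_2$ side, and the ``fix-up'' involution $\rho$ you posit --- one that preserves $(\des_2,\fmax)$ while converting $(\pcyc,\fix)$ to $(\pex,\fix)$ \emph{and} keeping $\cyc$ intact --- is precisely the missing object; candidates built from $\Phi_1^{-1}\circ\Phi_{SZ}$ do not obviously preserve $\cyc$, since both $\Phi_1$ and $\Phi_{SZ}$ are Fran\c con--Viennot/Foata--Zeilberger-type maps that convert linear data to cyclic data without controlling the cycle count of their input. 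Your analytic plan likewise presupposes a new master J-fraction mixing the linear refinements $(31\text{-}2)(i,\sigma)$, $(2\text{-}31)(i,\sigma)$ with the weight $\lambda^{\cyc}$, which is not available in the paper and whose existence is the whole difficulty. In short: your reduction to Conjecture~\ref{conj:1} is sound and matches the paper, but the statement remains unproved both in your proposal and in the paper itself, and none of your sketched routes closes the gap.
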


The Fran\c con-Viennot bijection $\Phi_{FV}$  and Foata-Zeilberger bijection
$\Phi_{FZ}$
 are two fundamental  bijections from permutations to \emph{Laguerre histories} \cite{FV79,FZ90}. The composition $\Phi_{FZ}^{-1}\circ \Phi_{FV}$  as a bijection $\Phi$ on $\Sym_n$
 was first characterized in \cite{CSZ97}. Since then
 some variations of this bijection appeared in \cite{Cor06, SZ10, SZ12, HMZ20}. 
Our bijections $\Phi_1$ and $\Phi_2$ are similar to  
those in \cite{Cor06, SZ10}. 
Besides,  the two equations \eqref{form2} and \eqref{form4} ask for  a bijection $\Phi_3$ on $\Sym_n$ satisfying 
\begin{equation}
(\des, \des_2, \fmax)\,\sigma
=(\exc, \ear,\fix)\,\Phi_3 (\sigma).\label{des2-ear-fix}
\end{equation}
Although a bijection via Laguerre histories could be given by combining   $\Phi_{FV}$  
and 
$\Phi_{FZ}^{-1}$,  a direct bijection 
similar to $\Phi_1$ and  $\Phi_2$  would be interesting.

\section*{Acknowledgement} We are grateful to the anonymous referee for 
his/her careful reading and constructive suggestions.


\end{document}